\documentclass[11pt,letterpaper]{article}
\usepackage[T1]{fontenc}
\usepackage[utf8]{inputenc}
\usepackage{libertinus-type1}
\usepackage{microtype}
\usepackage[margin=1.15in]{geometry}
\usepackage{amsmath,amssymb,amsthm,mathtools,mathrsfs}
\usepackage{libertinust1math}
\usepackage{enumitem}
\usepackage{needspace}
\usepackage[dvipsnames]{xcolor}
\definecolor{paperlink}{HTML}{8B0000}
\definecolor{papercite}{HTML}{00CED1}
\definecolor{paperurl}{HTML}{EC008C}
\usepackage[nosort,nocompress]{cite}

\usepackage[colorlinks=true,linkcolor=paperlink,citecolor=papercite,urlcolor=paperurl]{hyperref}
\usepackage[nameinlink,noabbrev]{cleveref}
\numberwithin{equation}{section}
\newtheorem{theorem}{Theorem}[section]
\newtheorem{lemma}[theorem]{Lemma}
\newtheorem{corollary}[theorem]{Corollary}
\newtheorem{proposition}[theorem]{Proposition}
\theoremstyle{remark}

\newcommand{\R}{\mathbb R}

\newcommand{\E}{\mathbb E}
\renewcommand{\P}{\mathbb P}
\newcommand{\Sstrip}{\mathcal S}
\newcommand{\dd}{\,\mathrm d}
\newcommand{\ind}{\mathbf 1}

\DeclareMathOperator{\sech}{sech}
\DeclareMathOperator{\Var}{Var}
\DeclareMathOperator{\Cov}{Cov}
\DeclareMathOperator{\supp}{supp}
\allowdisplaybreaks[2]
\hypersetup{pdftitle={The semiclassical limit of unit-area quantum disks },pdfauthor={},pdfsubject={Confidential manuscript}}
\title{The semiclassical limit of unit-area quantum disks}
\author{
  Yuchen Fan\thanks{These authors contributed equally
  and are joint first authors.}\\
  {\small Courant Institute, New York University, USA}\\
  {\small yf2085@nyu.edu}
  \and
  Zhenfeng Tu\footnotemark[1]\\
  {\small Courant Institute, New York University, USA}\\
{\small zt2255@nyu.edu}
}
\date{}
\begin{document}
\maketitle
\begin{abstract}
Fix $W\in(0,2)$ and let $\gamma\downarrow0$. We prove that a
weight-$W$ quantum disk conditioned to have unit quantum area
converges, in the maximum-embedded strip, to the deterministic field
$\phi_W(t+i\theta)=\log(W/(4\pi))-2\log\cosh(Wt/2)$ and its
constant-curvature area measure. The proof establishes a joint
large-deviation principle for the radial Brownian input, the lateral
Gaussian free field, and the total Gaussian multiplicative chaos
mass. Diverging-order moment bounds justify both adjoining the
noncontinuous area functional and retaining its unbounded positive
power in the unit-area expectation. An exact radial Girsanov shift
centers the candidate, and reflection together with the sharp Onofri
inequality proves its unique optimality. This yields concentration
without a quadratic expansion of the running maximum or an order-one
partition-function asymptotic. We also identify the field rate with
the renormalized Liouville action in the maximum embedding.
\end{abstract}
\par\medskip
\noindent
\textbf{2020 Mathematics Subject Classification.}
Primary 60D05; Secondary 60G57, 60F10, 81T40.

\par\smallskip
\noindent
\textbf{Key words.}
Liouville quantum gravity, quantum disks, semiclassical limit,
Gaussian multiplicative chaos, large deviations, Liouville action.

\section{Introduction}
\label{intro:section}

We study the semiclassical limit of a quantum disk of fixed weight
$W\in(0,2)$ conditioned to have quantum area one. The limiting surface
has positive constant curvature and two marked boundary points.
Our proof evaluates the normalized unit-area expectation by a joint
large-deviation argument: we retain the radial field, the lateral
field, and the total area as simultaneous variables, then identify
the unique profile selected by the area weight. Two features make
this formulation useful. The area is a Gaussian multiplicative chaos
mass, hence is not continuous in the topology of the field; and the
maximum embedding introduces a Brownian running maximum which does
not admit a direct quadratic small-noise expansion. We handle the
first feature by exponentially good approximations and the second
by keeping the reflection map exact throughout the variational
analysis.

The quantum disk is a basic finite-volume surface in Liouville
quantum gravity and in the mating-of-trees construction
\cite{DMS21,AG21}. Liouville theory on simply connected surfaces
with boundary was constructed in \cite{HRV18}; the equivalence of
fixed-boundary-length descriptions is established in \cite{Cer21},
and the weight-$W$ disks used in conformal welding are treated in
\cite{AHS23}. We use their maximum-embedded strip description.
Section~\ref{setup:section} gives the precise conventions for the
field, its area measure, and conditioning on area one. Here we state
the result and explain the proof, including the estimates needed to
pass from the Gaussian law to the area-weighted expectation.

\subsection{The limit and the exact area weight}

The starting point is an exact expression for unit-area expectation.
It exhibits an area reward at the same scale as the Gaussian
large-deviation cost, so the limit must be found by optimizing their
difference. We first describe that identity and the profile selected
by the optimization.

Write $\Sstrip=\R\times(0,\pi)$, with $z=t+i\theta$ and planar
Lebesgue measure $d^2z=dt\,d\theta$. For a Liouville field $h$, its
quantum area is the regularized random measure
\[
 \mu_h^\gamma(d^2z)=\lim_{\varepsilon\downarrow0}
       \varepsilon^{\gamma^2/2}e^{\gamma h_\varepsilon(z)}d^2z,
\]
where $h_\varepsilon$ is the circle average; the limit is understood
locally in probability in the weak topology of measures.
Let $\widehat h=R_\gamma+X^\perp$ be the disk field before adding
its zero mode. The lateral field $X^\perp$ is the zero-angular-mean
projection of the Neumann GFF, independent of $R_\gamma$. On either
half-line the radial process is variance-two Brownian motion with
drift $-W/\gamma+\gamma/2$, conditioned to stay nonpositive and
started from its entrance point zero; the two halves are independent.
Thus the vertical average of $\widehat h$ has maximum zero at $t=0$.
This fixes horizontal translation and is called the maximum embedding.

Set $Z_\gamma=\mu_{\widehat h}^\gamma(\Sstrip)$ and denote by
$h_\gamma^{(1)}$ the unit-area field. Disintegrating the zero mode
gives, for bounded Borel field observables $F$,
\begin{equation}\label{intro:ratio}
 \E^{(1)}[F(\gamma h_\gamma^{(1)})]
 =\frac{\E\!\left[
 F(\gamma\widehat h-\log Z_\gamma)
 Z_\gamma^{p_\gamma}\right]}
 {\E[Z_\gamma^{p_\gamma}]},
 \qquad p_\gamma=\frac W{\gamma^2}-\frac12.
\end{equation}
Here $\P$ and $\E$ refer to the independent radial and lateral
fields before the area tilt. The same mass appears both in the
weight and in the additive correction enforcing unit area.
Since $\gamma^2p_\gamma\to W$, a mass $a$ contributes the
logarithmic reward $W\log a$ at speed $\gamma^{-2}$. Typical behavior
under $\P$ alone therefore does not determine the unit-area limit.

Theorem~\ref{setup:main} proves that the selected profile is
\begin{equation}\label{intro:classical}
 \phi_W(t+i\theta)=\log\frac W{4\pi}
                        -2\log\cosh\frac{Wt}{2}.
\end{equation}
For every $s>0$ and every bounded continuous real functional
$F$ on $H^{-s}_{\mathrm{loc}}(\Sstrip)$,
\begin{equation}\label{intro:target}
 \lim_{\gamma\downarrow0}\E^{(1)}[F(\gamma h_\gamma^{(1)})]
                         =F(\phi_W).
\end{equation}
The unit-area measures converge weakly in probability to
$e^{\phi_W}d^2z$, including on the compactified strip, without mass
escaping to the ends or the angular boundary. The limiting metric
has area one, curvature $\pi W$, geodesic boundary away from the
marked points, and opening angle $\pi W/2$ at each marked point.
The corresponding constant-curvature geometry is consistent with
the classification of disks with two boundary corners in \cite{JWZ09}.

\subsection{One joint LDP for the fields and their area}

Section~\ref{decomp:section} proves the probabilistic input once,
allowing any fixed finite-energy radial displacement. Its main
milestone is Lemma~\ref{ldp:joint}: the joint rate is exactly the
Gaussian energy on the graph of the deterministic area. This result
both accounts for the dependence of the mass on the fields and
permits the later Girsanov change of variables without another LDP
proof.

Let $B$ have independent variance-two Brownian halves, pinned at
zero and independent of $X^\perp$. For $t\ge0$ and $\sigma\in\{-1,1\}$,
define
\[
 d_\gamma(u;\sigma t)=u(\sigma t)-(W-\gamma^2/2)t
 -2\sup_{0\le r\le t}\{u(\sigma r)-(W-\gamma^2/2)r\}.
\]
The drifted Pitman representation, Lemma~\ref{setup:Pitman}, gives
$\gamma R_\gamma\overset{\rm law}=d_\gamma(\gamma B)$.
The map $d_\gamma$ is continuous locally in time, is nonpositive,
and converges locally uniformly to $d_0$, uniformly in its input.
The Gaussian pair $(\gamma B,\gamma X^\perp)$ has a good LDP at
speed $\gamma^{-2}$ with energy
\[
 I_G(u,v)=\frac14\int_\R|u'|^2\,dt
            +\frac1{4\pi}\int_{\Sstrip}|\nabla v|^2\,d^2z.
\]
Finite energy means that $u$ is locally absolutely continuous,
$u(0)=0$, $u'\in L^2(\R)$, and $v\in H^1(\Sstrip)$ has zero
angular mean. The Gaussian topology and compactness of these energy
sublevels are established in Lemma~\ref{ldp:gaussian}.

For a deterministic radial exponent $f$, write
\[
 Z_\gamma(f)=\int_{\Sstrip}e^{f(t)}
       (2\sin\theta)^{-\gamma^2/2}:e^{\gamma X^\perp(t,\theta)}:
                 \,dt\,d\theta.
\]
Colons mean full Gaussian variance subtraction; the displayed angular
factor converts this Wick normalization to the canonical quantum
area. The integral is a GMC mass, not a pointwise exponential of a
distribution. Subsection~\ref{reg:section} identifies its Fourier
construction with the canonical circle-average measure on the same
probability space.
For any fixed locally absolutely continuous $g$ with $g(0)=0$ and
$g'\in L^2(\R)$, Lemma~\ref{ldp:joint} gives the joint LDP under
$\P$ for
\[
 (\gamma B,\gamma X^\perp,Z_\gamma(d_\gamma(\gamma B+g)))
\]
with rate
\[
 I_g(u,v,a)=
 \begin{cases}
 I_G(u,v),&a=\displaystyle\int_{\Sstrip}
                  e^{d_0(u+g;t)+v(t,\theta)}\,dt\,d\theta,
                  \quad I_G(u,v)<\infty,\\
 +\infty,&\text{otherwise}.
 \end{cases}
\]
There is no independent area cost: the finite-rate area is determined
by the same radial and lateral profiles whose energy is paid.

The essential step is to justify this graph constraint despite the
noncontinuity of GMC. On a fixed box $[-T,T]\times(0,\pi)$, projection
onto the first $N$ angular modes makes the deterministic mass a
continuous function of the Gaussian pair. Lemma~\ref{mom:cutoff-error}
proves that removing the Fourier cutoff has arbitrarily negative
logarithmic probability rate. Proposition~\ref{mom:whole-strip},
together with a Gaussian tail bound for the radial norm, does the same
for removing the box. The Wick and boundary corrections at fixed
cutoffs vanish uniformly on compact Gaussian sets. In parallel,
the cell consequence of Onofri, Lemma~\ref{var:cell-exponential},
gives uniform deterministic approximation on every energy sublevel.
Choosing the boxes first and then the mode cutoffs produces one
exponentially good approximation sequence, to which the extended
contraction principle applies.

The estimates work at the diverging moment order
$q_\gamma=1+(W+2)/(2\gamma^2)$. On bounded boxes, the Fourier error
tends to zero in $L^{q_\gamma}(\mathbb P)$, and Markov's inequality
turns this into a superexponential probability estimate because
$\gamma^2q_\gamma\to(W+2)/2>0$. The same moment bounds will control
the area reward, since $q_\gamma>p_\gamma$ with a strict relative
margin for $W<2$. Their proof retains horizontal Lebesgue integration
along with the angular coordinate; this two-dimensional collision
estimate is what permits the full range $0<W<2$.

\subsection{The weighted variational problem and concentration}

Section~\ref{rad:section} turns the joint probability LDP into the
limit of \eqref{intro:ratio}. An exact radial Girsanov shift centers
the candidate; reflection and Onofri identify its unique optimality;
and high moments justify weighted Laplace bounds. These steps yield
concentration under the normalized weight, with the observable and
its area normalization retained throughout.

The radial stationarity calculation in Subsection~\ref{proof:shift}
selects
\[
 q_W(t)=-2\log\cosh(Wt/2),\qquad
 g_*(t)=W|t|+q_W(t),\qquad
 \int_{\Sstrip}e^{q_W(t)}\,d^2z=\frac{4\pi}{W}.
\]
Translate the Brownian input by $g_*/\gamma$, and put
$b_\gamma(u)=d_\gamma(u+g_*)$,
$A_\gamma^*=Z_\gamma(b_\gamma(\gamma B))$, and
\[
 K(u)=\frac{W^2}{4}\int_\R u(t)\sech^2(Wt/2)\,dt,
 \qquad
 H_\gamma=\left(\frac{WA_\gamma^*}{4\pi}\right)^{p_\gamma}
                        e^{-K(\gamma B)/\gamma^2}.
\]
Lemma~\ref{setup:shifted-identity} gives the exact identity
\[
 \E^{(1)}[F(\gamma h_\gamma^{(1)})]
 =\frac{\E[F(b_\gamma(\gamma B)+\gamma X^\perp
                              -\log A_\gamma^*)H_\gamma]}
        {\E H_\gamma}.
\]
The expectations on the right still use the centered Gaussian law
$\P$. The deterministic Cameron--Martin factor cancels from the
ratio. We denote the probability law with density
$H_\gamma/\E H_\gamma$ by $\mathbb Q_\gamma$.

Apply the unified joint LDP with $g=g_*$. If
$A_0^*(u,v)=\int e^{b_0(u)+v}\,d^2z$, the logarithmic weight minus
the joint rate, on its finite-rate graph, is
\begin{align}\label{intro:variational-expression}
 -J(u,v)
 ={}&W\log\left[\frac W{4\pi}A_0^*(u,v)\right]-K(u)\notag\\
 &-\frac14\int_\R|u'|^2\,dt
   -\frac1{4\pi}\int_{\Sstrip}|\nabla v|^2\,d^2z.
\end{align}
The use of $A_0^*$ here follows from the joint rate's graph constraint;
it does not assume that the random mass can already be replaced by a
constant inside the expectation.

The variational milestone is Lemma~\ref{var:selection}. For
$m_\sigma(t)=\sup_{0\le r\le t}\{q_W(r)+u(\sigma r)\}$, reflection
converts the radial energy and source into those of
$f(\sigma t)=u(\sigma t)-2m_\sigma(t)$, with an explicit cost for
the record heights. Applying the sharp strip Onofri inequality to
$f+v$ gives
\[
 J(u,v)\ge W\sum_{\sigma=\pm}m_\sigma(\infty)
       +\frac{4-W^2}{16\pi}\int_{\Sstrip}|v_\theta|^2\,d^2z
       \ge0.
\]
For $W<2$, equality forces $v=0$; the radial equality case and the
maximum constraint then force $u=0$. A separate coercivity estimate,
Lemma~\ref{var:coercivity}, gives compact sublevels of $J$ and hence
a strict gap away from this optimizer. The strict inequality $W<2$
thus enters both the moment estimates and uniqueness in the
variational problem.

To pass from this comparison to expectations, Lemma~\ref{ldp:weight-moment}
proves, for some fixed $q>1$,
\[
 \limsup_{\gamma\downarrow0}\gamma^2\log\E[H_\gamma^q]<\infty.
\]
Together with exponential tightness, this removes the weighted
contribution outside compact sets. On compact sets bounded away from
zero area, the logarithmic weight converges uniformly; small areas
are controlled separately. Lemma~\ref{ldp:weighted-upper} consequently
gives upper and lower weighted Laplace bounds with objective $-J$.
They imply
\[
 \gamma^2\log\E H_\gamma\longrightarrow0,\qquad
 (\gamma B,\gamma X^\perp,A_\gamma^*)
        \longrightarrow(0,0,4\pi/W)
        \quad\text{under }\mathbb Q_\gamma,
\]
with exponentially small probabilities outside any neighborhood of
the limiting triple. Continuity of the field argument at this triple
and boundedness of $F$ prove \eqref{intro:target}. Adjoining finitely
many tested masses by the same approximation argument gives area
convergence; total mass one prevents escape of area. Keeping the
deterministic Girsanov factor also yields the free energy
\[
 \lim_{\gamma\downarrow0}\gamma^2\log\E[Z_\gamma^{p_\gamma}]
                  =W\left(1+\log\frac\pi W\right).
\]
No order-one asymptotic for $\E H_\gamma$ is needed for these conclusions.

\subsection{Comparison with the semiclassical expansion of Lacoin--Rhodes--Vargas}

We explain why we work at large-deviation scale rather than seek a
quadratic expansion of the shifted expectation. The issue is both the
positive area power and the running maximum in the maximum embedding.
The latter produces an intermediate scale near the origin, which
requires separate analysis if one wants fluctuation asymptotics.

Lacoin--Rhodes--Vargas \cite{LRV22} treat the negative-curvature
semiclassical regime on the sphere and obtain partition-function
asymptotics, Gaussian field fluctuations, and an LDP. Their
Cameron--Martin shift is followed by a second-order expansion of the
logarithmic chaos mass; convergence of the residual integrand and
uniform integrability justify passage to its expectation
\cite[Sections 4.5 and 5, Proposition 4.3]{LRV22}.
They also discuss fixed-volume positive curvature and identify the
opposite sign of the exponential-moment estimate as an obstruction
to extending that argument \cite[Section 2.3]{LRV22}.
Our unit-area disk has the positive power in \eqref{intro:ratio}.
We control it through diverging-order positive moments and weighted
Laplace bounds, which suffice for concentration without an order-one
limit of the residual integrand.

There is a further difficulty specific to our radial representation.
Under the shifted Gaussian reference law $\P$, the running maximum
on the positive half-line is
\[
 M_\gamma(t)=\sup_{0\le r\le t}
       \{\gamma B(r)-2\log\cosh(Wr/2)+\gamma^2r/2\},
 \qquad
 b_\gamma(\gamma B;t)=q_W(t)+\gamma B(t)+\gamma^2t/2
                                     -2M_\gamma(t).
\]
The deterministic profile has a quadratic maximum at zero:
$q_W(r)=-W^2r^2/4+O(r^4)$. Brownian noise there has size
$\gamma\sqrt r$, so balancing it against $r^2$ gives a time scale
$r\asymp\gamma^{2/3}$ and a height scale $\gamma^{4/3}$.
More precisely, for every fixed $L>0$, Brownian scaling and the
uniform Taylor expansion on $[0,L]$ give
\begin{equation}\label{intro:maximum-scale}
 \gamma^{-4/3}M_\gamma(\gamma^{2/3}L)
 \ \Longrightarrow\
 \sup_{0\le s\le L}\{\widetilde B(s)-W^2s^2/4\},
\end{equation}
where $\widetilde B$ is variance-two Brownian motion. Indeed, the
process inside the rescaled supremum has the law of
$\widetilde B(s)+\gamma^{-4/3}q_W(\gamma^{2/3}s)
+\gamma^{4/3}s/2$, whose deterministic part converges uniformly to
$-W^2s^2/4$. The limiting maximum is strictly positive almost surely.
In particular, the running maximum is not an $O_{\mathbb P}(\gamma^2)$
remainder: for any fixed $t>0$, monotonicity and
\eqref{intro:maximum-scale} imply
$M_\gamma(t)/\gamma^2\to\infty$ in probability under $\P$.

Consequently a direct expansion of the reflected radial field through
order $\gamma^2$ must account for the maximum separately. A correction
of height $\gamma^{4/3}$, when multiplied by the exponent of order
$\gamma^{-2}$ in the area weight, can contribute at the intermediate
scale $\gamma^{-2/3}$. The smooth quadratic chaos expansion alone
does not control this term. Our proof avoids that expansion: the
running maximum remains inside the continuous radial map and is
handled exactly by the reflection identity in the rate function.

This observation does not establish failure of a field fluctuation
limit. The centered unscaled field is
\[
 h_\gamma^{(1)}-\gamma^{-1}\phi_W
       =\gamma^{-1}(\gamma h_\gamma^{(1)}-\phi_W),
\]
so order-one field fluctuations correspond to order $\gamma$, not
order $\gamma^2$, in the scaled field. A maximum correction of size
$\gamma^{4/3}$ would contribute only $\gamma^{1/3}$ at that scale.
Moreover, \eqref{intro:maximum-scale} concerns $\P$; the unit-area
weight changes the law to $\mathbb Q_\gamma$ and can alter these local
scales. Obtaining a fluctuation theorem would require analyzing that
weighted local behavior together with the remaining field and mass.
Uniform integrability could justify such a limit once convergence of
the relevant residual integrand is proved; it does not by itself
supply the fluctuation expansion. We prove concentration and large
deviations here and leave this finer question open.

Section~\ref{setup:section} specifies the quantum disk and its
unit-area law. Section~\ref{decomp:section} proves the unified joint
LDP and Section~\ref{rad:section} applies it to the weighted
expectation as described above. Finally, Appendix~\ref{act:section}
contracts the tilted joint rate to the normalized field and identifies
it with the renormalized Liouville action in the maximum embedding.

\section{Quantum disks, unit-area conditioning, and statement of the result}
\label{setup:section}

This section keeps separate the four ingredients that enter the definition:
the conformal notion of a quantum surface, the Wick-ordering convention,
the maximum-embedded weight-$W$ disk, and the disintegration at unit area.
The distinction is useful because only the lateral Gaussian field is Wick
ordered, whereas the radial process and the zero mode are treated by their
ordinary exponential weights.

We use $\P$ and $\E$ for probability and expectation before the
unit-area tilt, and $\ind_A$ for the indicator of an event $A$.
For $1\le p<\infty$, $\|Y\|_{L^p}=(\E|Y|^p)^{1/p}$ is the
probability norm; a spatial $L^p$ norm uses the stated reference measure.
The notation $\lesssim_{W,p}$ means that the ratio is bounded by a
finite positive number depending only on the displayed subscripts;
$O_{W,p}(1)$ denotes a quantity with the same bound in absolute value.
An omitted subscript means an absolute bound.
We write $C_c$ for continuous functions of compact support and
$C_c^\infty$ for smooth functions of compact support; $\supp f$
denotes the closed support of $f$. The notation $\Var$ and $\Cov$
means variance and covariance. We set
$\log^+x=\max\{0,\log x\}$, $x\wedge y=\min\{x,y\}$, and
$x\vee y=\max\{x,y\}$. The symbol $\delta_0$ denotes the Dirac
mass at zero. All sufficiently-small
$\gamma$ statements are for fixed $0<W<2$.

\subsection{Quantum surfaces and quantum area}

We first give the precise definition of the quantum disk used in this
paper.  A doubly marked $\gamma$-Liouville quantum surface is an equivalence
class of tuples
\[
 (D,h,z_-,z_+),
\]
where $D$ is a simply connected domain, $z_-,z_+\in\partial D$ are distinct
marked boundary points, and $h$ is a distribution on $D$.  If
$f:\widetilde D\to D$ is conformal, then
\begin{equation}\label{setup:surface-equivalence}
 (D,h,z_-,z_+)\sim
 \left(\widetilde D,
 h\circ f+\left(\frac2\gamma+\frac\gamma2\right)\log|f'|,
 f^{-1}(z_-),f^{-1}(z_+)\right).
\end{equation}
The corresponding quantum area measure is the Gaussian multiplicative
chaos measure
\begin{equation}\label{setup:circle-area}
 \mu_h^\gamma(d^2z)
 =\lim_{\varepsilon\downarrow0}
               \varepsilon^{\gamma^2/2}e^{\gamma h_\varepsilon(z)}d^2z,
\end{equation}
Here $h_\varepsilon(z)$ is the average of $h$ on the Euclidean circle
of radius $\varepsilon$ about $z$, for $\varepsilon<
\operatorname{dist}(z,\partial D)$. The limit is taken against
compactly supported continuous tests, in probability. For the disk
fields constructed below it is a positive Radon measure. When another
regularization is used, we include the explicit finite-variance
correction described in Subsection~\ref{reg:section}.  The coordinate-change rule \eqref{setup:surface-equivalence}
makes this measure intrinsic to the quantum surface
\cite{DS11,HRV18}.  We take $0<\gamma<2$ and later restrict to sufficiently
small $\gamma$ for each fixed $W$.

\subsection{Wick ordering and the strip regularization}
\label{setup:wick-section}

We use the Wick-ordering convention of \cite{LRV22}.  Let $X$ be a centered
Gaussian distribution and let $X_\varepsilon$ be the regularization fixed
above.  For every real $\alpha$, set
\begin{equation}\label{setup:Wick}
 :e^{\alpha X_\varepsilon(z)}:
 :=\exp\left(\alpha X_\varepsilon(z)
             -\frac{\alpha^2}{2}\E[X_\varepsilon(z)^2]\right).
\end{equation}
The colons therefore indicate that the diagonal variance has been
subtracted.  They always refer to the centered Gaussian field appearing
inside the colons.  In particular, the radial process of a quantum disk
is not Wick ordered.

The Wick powers are defined by
\begin{equation}\label{setup:Wick-powers}
 :X_\varepsilon(z)^n:
 :=\left.\frac{\partial^n}{\partial\alpha^n}
                    :e^{\alpha X_\varepsilon(z)}:\right|_{\alpha=0}.
\end{equation}
For example,
\[
 :X_\varepsilon^2:
 =X_\varepsilon^2-\E[X_\varepsilon^2],\qquad
 :X_\varepsilon^3:
 =X_\varepsilon^3-3\E[X_\varepsilon^2]X_\varepsilon.
\]
Thus the identity
\begin{equation}\label{setup:Wick-series}
 :e^{\alpha X_\varepsilon}:
 =\sum_{n=0}^\infty\frac{\alpha^n}{n!}:X_\varepsilon^n:
\end{equation}
holds for every fixed regularization scale.  For the limiting GFF, this
series is used only after pairing with a test function or a suitable
reference measure.

If $\rho$ is such a measure, our notation
\begin{equation}\label{setup:Wick-integral}
 \int:e^{\alpha X(z)}:\,\rho(dz)
 :=\lim_{\varepsilon\downarrow0}
             \int:e^{\alpha X_\varepsilon(z)}:\,\rho(dz)
\end{equation}
means convergence in the GMC sense whenever the limit exists.  Wick
ordering affects only the Gaussian field: for a deterministic function
$f$, or conditionally on an independent random function $f$,
\begin{equation}\label{setup:Wick-shift}
 :e^{\alpha(X+f)}:=e^{\alpha f}:e^{\alpha X}:.
\end{equation}

Full variance subtraction differs from subtraction of only the
logarithmic divergence. Indeed, at every fixed cutoff,
\[
 \varepsilon^{\gamma^2/2}e^{\gamma(X_\varepsilon+f)}
 =e^{\gamma f}:e^{\gamma X_\varepsilon}:
   \exp\!\left\{\frac{\gamma^2}{2}
        \bigl(\Var X_\varepsilon-\log\varepsilon^{-1}\bigr)\right\}.
\]
Thus the finite part of the variance determines the deterministic
factor multiplying the Wick measure. Here $f$ is a continuous shift,
or an independent continuous random shift on which we condition.
This correction also appears in \cite[Proposition 1.2]{DS11}.

To specify the finite-variance correction, define $X^\perp$ to be the
projection of a Neumann
GFF on $\Sstrip$ onto the subspace of distributions having zero mean on
every vertical segment.  Equivalently, its covariance kernel is
\begin{equation}\label{setup:covariance}
 G_\perp((t,\theta),(s,\theta'))
 =2\sum_{n\ge1}\frac{e^{-n|t-s|}}n
                         \cos(n\theta)\cos(n\theta').
\end{equation}
For later regularizations, take independent stationary Gaussian processes
$U_n$ with continuous paths and covariance $e^{-n|t-s|}$, and set
\begin{equation}\label{setup:modes}
 X_N^\perp(t,\theta)
 =\sum_{n=1}^N\sqrt{\frac2n}U_n(t)\cos(n\theta).
\end{equation}
Their distributional limit is $X^\perp$.  Every cutoff has zero angular
mean, and both half-strips are restrictions of this same Gaussian field.
We denote the covariance of $X_N^\perp$ by
$G_{\perp,N}(z,w)=\E[X_N^\perp(z)X_N^\perp(w)]$; it is the sum in
\eqref{setup:covariance} restricted to $1\le n\le N$. The covariance
kernel of the unregularized field is interpreted off the diagonal and
as a distribution when paired with smooth test functions.

For this field, summing
the Fourier series gives, for $z=t+i\theta$ and $w=s+i\theta'$,
\begin{align}
 G_\perp(z,w)
 ={}&-\log\left|1-e^{-|t-s|+i(\theta-\theta')}\right|\notag\\
 &-\log\left|1-e^{-|t-s|+i(\theta+\theta')}\right|.
 \label{setup:kernel-sum}
\end{align}
Consequently, at a fixed interior point,
\[
 G_\perp(z,w)
 =-\log|z-w|-\log(2\sin\theta)+o(1)\qquad(w\to z).
\]
The finite part of the circle-average variance is therefore
$-\log(2\sin\theta)$. The preceding cutoff identity gives the
factor $(2\sin\theta)^{-\gamma^2/2}$.
For any continuous radial function $R$, deterministic or independent of
$X^\perp$, the exact quantum-area measure is
\begin{equation}\label{setup:area}
 \mu_{R+X^\perp}^\gamma(dt\,d\theta)
 =e^{\gamma R(t)}:e^{\gamma X^\perp(t,\theta)}:
                     (2\sin\theta)^{-\gamma^2/2}dt\,d\theta.
\end{equation}
The colons retain the full Gaussian Wick convention throughout.  The
reference measure in \eqref{setup:area} is explicitly
$(2\sin\theta)^{-\gamma^2/2}dt\,d\theta$; it is not Lebesgue measure.
We use Euclidean circle averages, or another regularization with the
deterministic correction that gives the same area.  The common coupling
and comparison of regularizations are proved in Subsection~\ref{reg:section}.

\subsection{The maximum-embedded quantum disk of weight \texorpdfstring{$W$}{W}}
\label{setup:disk-section}

We now define the infinite measure $\mathcal M^{\mathrm{disk}}_2(W)$ on
two-pointed quantum disks of weight $W$.  We use the strip
$\Sstrip=\R\times(0,\pi)$, with marked points $-\infty,+\infty$.
For the values of $\gamma$ considered below, $W/\gamma-\gamma/2>0$.
Let $R^+$ and $R^-$ be independent Brownian motions with variance
parameter two and drift $-W/\gamma+\gamma/2$, started from zero and
conditioned to stay nonpositive on $(0,\infty)$.  Here and below this
conditioning means the standard Doob-transform entrance law obtained by
starting at $-\varepsilon$, conditioning never to hit zero, and sending
$\varepsilon\downarrow0$.  Define the two-sided process
\[
 R_\gamma(t)=
 \begin{cases}
 R^+(t),&t\ge0,\\
 R^-(-t),&t<0.
 \end{cases}
\]

In the disk construction, take the lateral field $X^\perp$ just defined
independently of the radial process $R_\gamma$.

Set $\widehat h=R_\gamma+X^\perp$, and independently equip an additive
constant $c$ with the infinite measure
\begin{equation}\label{setup:zero-mode}
 \frac\gamma2
       e^{(\gamma/2-W/\gamma)c}\,dc,
\end{equation}
and set $h=\widehat h+c$.  The pushforward of this product measure under
$(R_\gamma,X^\perp,c)\mapsto(\Sstrip,h,-\infty,+\infty)$, modulo
\eqref{setup:surface-equivalence}, is by definition
$\mathcal M^{\mathrm{disk}}_2(W)$.  This representative is its maximum
embedding: the vertical-average process is pinned at its maximum at
$t=0$, so the representative is not horizontally translation invariant.
This construction agrees with the standard weight-$W$ quantum disk
normalization of \cite{AHS23,DMS21}.

\subsection{Unit-area disintegration and the main theorem}

Because $\mathcal M^{\mathrm{disk}}_2(W)$ is an infinite measure and the
event of having exactly unit area has zero mass, the unit-area disk must
be defined by disintegration.  To use the same mass notation before and
after a deterministic shift, write
\begin{equation}\label{intro:mass}
 \begin{split}
 Z_\gamma(f)
 &=\int_{\Sstrip}e^{f(t,\theta)}(2\sin\theta)^{-\gamma^2/2}
                              :e^{\gamma X^\perp(t,\theta)}:\,dt\,d\theta,\\
 Z_0(f)&=\int_{\Sstrip}e^{f(t,\theta)}\,dt\,d\theta.
 \end{split}
\end{equation}
Here $f$ is deterministic or independent of $X^\perp$, and the notation
is used whenever the indicated integrals exist.
$Z_{\gamma,N}(f)$ denotes the same mass with $X^\perp$ replaced by
$X_N^\perp$ and its own variance subtracted.  Without an argument, put
\[
 Z_\gamma:=\mu_{\widehat h}^\gamma(\Sstrip)
                         =Z_\gamma(\gamma R_\gamma).
\]
The unit-area law is the probability measure under which the
zero-mode-free field is biased by its total area to the power
$W/\gamma^2-1/2$, and then shifted to make that area one. Equivalently,
for a bounded measurable functional $F$, its expectation is defined by
\begin{equation}\label{setup:disintegration}
 \E^{(1)}[F(\gamma h)]
 =\frac{\E\left[F(\gamma\widehat h-\log Z_\gamma)
                         Z_\gamma^{W/\gamma^2-1/2}\right]}
        {\E\left[Z_\gamma^{W/\gamma^2-1/2}\right]}.
\end{equation}
The estimates below show that the denominator is finite and strictly
positive in the parameter range of Theorem~\ref{setup:main} for all
sufficiently small $\gamma$.  Thus, under the unit-area law, the
zero-mode-free field $\widehat h$ is tilted by
$Z_\gamma^{W/\gamma^2-1/2}$, and its unit-area representative is
\begin{equation}\label{setup:unit-representative}
 h_\gamma^{(1)}=\widehat h-\frac1\gamma\log Z_\gamma.
\end{equation}
Indeed, adding $c$ multiplies the area by $e^{\gamma c}$.  With
$a=e^{\gamma c}Z_\gamma$, one has
\[
 c=\frac1\gamma\log\frac a{Z_\gamma},\qquad
 dc=\frac{da}{\gamma a},
\]
and substitution in \eqref{setup:zero-mode} gives
\begin{equation}\label{setup:area-disintegration}
 \frac\gamma2 e^{(\gamma/2-W/\gamma)c}\,dc
 =\frac12a^{-W/\gamma^2-1/2}
                        Z_\gamma^{W/\gamma^2-1/2}\,da.
\end{equation}
Disintegrating this expression at $a=1$ and normalizing proves
\eqref{setup:disintegration} and shows directly that
$\mu_{h_\gamma^{(1)}}^\gamma(\Sstrip)=1$.  Tonelli's theorem justifies
the change of variables first for nonnegative functionals, so it does
not presuppose the moment bound.  Set
\[
 \Phi_\gamma=\gamma h_\gamma^{(1)}.
\]
All chaos measures are constructed from the same Gaussian field;
Subsection~\ref{reg:section} identifies its circle, smooth, and Fourier
regularizations on one probability space.

For precision, a distribution belongs to
$H^{-s}_{\mathrm{loc}}(\Sstrip)$ if its product with every smooth
compactly supported interior cutoff belongs to $H^{-s}(\R^2)$ after
extension by zero. The seminorms of these products define its topology.
The Fourier definition of the global squared norm is
$\int_{\R^2}(1+|\xi|^2)^{-s}|\widehat f(\xi)|^2d\xi$,
with $\widehat f$ the unitary Fourier transform.  In the Gaussian
decomposition we also use the cosine Sobolev space $H_N^{-s}(0,\pi)$,
whose squared norm is
$\sum_{n\ge0}(1+n^2)^{-s}|\langle f,e_n\rangle|^2$, where
$e_0=\pi^{-1/2}$ and $e_n=(2/\pi)^{1/2}\cos(n\theta)$ for $n\ge1$.
The subscript records the Neumann cosine convention.  Define
\begin{equation}\label{setup:spaces}
 \widehat E_s=C_{\mathrm{loc}}(\R;H_N^{-s}(0,\pi)),\qquad
 E_s=\{v\in\widehat E_s:\langle v(t),e_0\rangle=0
                                      \text{ for every }t\}.
\end{equation}
Their restriction to the open strip is continuous into
$H^{-s}_{\mathrm{loc}}(\Sstrip)$, as verified in the proof.  The
deterministic comparison field is
\begin{equation}\label{setup:classical}
 \phi_W(t+i\theta)
 =\log\frac W{4\pi}-2\log\cosh\frac{Wt}{2}.
\end{equation}

For the measure statement, use the compact space
$[-\infty,+\infty]\times[0,\pi]$ with its product topology,
and extend all area measures there by zero outside $\Sstrip$.

\begin{theorem}[Semiclassical unit-area disk]\label{setup:main}
Fix $0<W<2$.  For every $s>0$,
\[
 \Phi_\gamma\longrightarrow\phi_W
 \quad\text{in probability in }H^{-s}_{\mathrm{loc}}(\Sstrip)
 \qquad(\gamma\downarrow0).
\]
Equivalently, for every
$F\in C_b(H^{-s}_{\mathrm{loc}}(\Sstrip))$,
\[
 \lim_{\gamma\downarrow0}\E^{(1)}[F(\Phi_\gamma)]=F(\phi_W).
\]
Moreover, in probability,
\begin{equation}\label{setup:measure}
 \mu_{h_\gamma^{(1)}}^\gamma
                  \Longrightarrow e^{\phi_W(z)}\,d^2z.
\end{equation}
The measure convergence also holds on the stated compactification;
the limiting area measure gives no mass to its boundary.
\end{theorem}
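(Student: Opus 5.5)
The plan is to carry out the program sketched in the introduction, in the following order.

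\textbf{Reduction to the shifted problem.} First, I would rewrite the unit-area expectation through the exact shifted identity of Lemma~\ref{setup:shifted-identity}. Translating the Brownian input by $g_*/\gamma$, with Cameron--Martin factor $e^{-K(\gamma B)/\gamma^2}$ times a deterministic constant that cancels, turns $\E^{(1)}[F(\Phi_\gamma)]$ into a $\mathbb Q_\gamma$-expectation of
\[
 F\bigl(b_\gamma(\gamma B)+\gamma X^\perp-\log A_\gamma^*\bigr).
\]

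\textbf{Weighted LDP and concentration.} Next, I would apply the joint LDP of Lemma~\ref{ldp:joint} with $g=g_*$ to the triple $(\gamma B,\gamma X^\perp,A_\gamma^*)$. Combined with the moment bound $\limsup\gamma^2\log\E[H_\gamma^q]<\infty$ of Lemma~\ref{ldp:weight-moment}, this yields the weighted Laplace upper and lower bounds of Lemma~\ref{ldp:weighted-upper} with objective $-J$.

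The variational analysis then gives the optimizer. Lemma~\ref{var:selection} shows $J\ge0$, with equality only at $(u,v)=(0,0)$, where $A_0^*=4\pi/W$. Lemma~\ref{var:coercivity} supplies compact sublevels, hence $\inf J>\delta>0$ off any neighborhood $\mathcal U$ of the optimal triple. The upper bound on $\mathcal U^c$ and the lower bound near the optimizer together give $\mathbb Q_\gamma(\mathcal U^c)\le e^{-\delta/(2\gamma^2)}$. The denominator asymptotic $\gamma^2\log\E H_\gamma\to0$ also comes out of the same pair of bounds.

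\textbf{Field convergence.} On $\mathcal U$, I would use that $b_\gamma\to b_0$ uniformly in the input and that the field map
\[
 (u,v,a)\mapsto b_\gamma(u)+v-\log a
\]
is continuous into $H^{-s}_{\mathrm{loc}}$ at $(0,0,4\pi/W)$. This requires checking that the embeddings $E_s\to H^{-s}_{\mathrm{loc}}(\Sstrip)$ and $C_{\mathrm{loc}}\to H^{-s}_{\mathrm{loc}}$ are continuous. With $b_0(0)=q_W$, the image of the optimal triple is $q_W-\log(4\pi/W)=\phi_W$. Boundedness of $F$ plus exponential smallness of $\mathcal U^c$ then give $\E^{(1)}[F(\Phi_\gamma)]\to F(\phi_W)$. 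Convergence in probability follows by taking $F$ a bounded continuous function of a seminorm distance.

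\textbf{Measure convergence.} For $\varphi\in C_c(\Sstrip)$, I would adjoin the tested mass
\[
 Z_\gamma^\varphi=\int\varphi\,e^{b_\gamma}(2\sin\theta)^{-\gamma^2/2}:e^{\gamma X^\perp}:\,d^2z
\]
to the joint LDP, using the same Fourier-cutoff and box approximations of Lemma~\ref{mom:cutoff-error} and Proposition~\ref{mom:whole-strip}. Its finite-rate graph is $\int\varphi\, e^{b_0(u)+v}\,d^2z$, so under $\mathbb Q_\gamma$ it concentrates at $\int\varphi\, e^{q_W}\,d^2z$. Dividing by $A_\gamma^*$ gives $\mu_{h^{(1)}_\gamma}(\varphi)\to\int\varphi\, e^{\phi_W}\,d^2z$ in probability. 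A countable dense family of test functions yields weak convergence in probability on the open strip. On the compactification, total mass is exactly one and the limit also has mass one, so no mass escapes to the ends or to $\theta\in\{0,\pi\}$. Tightness of the total mass and the inner-regularity argument (the Portmanteau theorem) then upgrade the result, and the limit gives the boundary zero mass.

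\textbf{Main obstacle.} The delicate step is the weighted lower and upper bounds near the optimizer. The logarithmic weight $p_\gamma\log(WA^*/4\pi)$ is unbounded and degenerates as $A^*\to0$, and $K$ is only linear. I would handle small areas by splitting on $\{A_\gamma^*<\eta\}$: there $H_\gamma$ is bounded by $e^{-K/\gamma^2}(W\eta/4\pi)^{p_\gamma}$, and I would show this contributes at a rate tending to $-\infty$ as $\eta\to0$. On $\{A_\gamma^*\ge\eta\}$, I would use Varadhan's lemma with the $q$-moment bound as tail control.
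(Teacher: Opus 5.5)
Your proposal follows the paper's proof essentially step for step. Both use the exact shifted identity, the joint LDP with $g=g_*$, reflection plus Onofri for uniqueness with coercivity for a strict gap, the fixed-$q$ weight moment feeding weighted Laplace bounds with a separate small-area cutoff, continuity of the field map at $(0,0,4\pi/W)$, and adjoined tested masses together with total mass one for the measure. The only variation is your small-area bound, which holds globally because $\E e^{-K(\gamma B)/\gamma^2}=e^{W(2\log2-1)/\gamma^2}$ exactly; the paper instead bounds the source on a compact set obtained from exponential tightness.
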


Convergence in probability here means concentration about the deterministic
field and measure; no coupling between different values of $\gamma$ is
required.  The observable statement is proved directly from the normalized
expectation, and the area convergence is proved jointly with the field.

\Needspace{10\baselineskip}
\begin{lemma}[Geometry of the limiting metric]\label{setup:geometry}
The deterministic metric $e^{\phi_W}|dz|^2$ has area one and satisfies
\[
 -\Delta\phi_W=2\pi W e^{\phi_W},\qquad
 \partial_n\phi_W=0\quad\text{on }\partial\Sstrip.
\]
Its Gaussian curvature is $\pi W$, its horizontal boundary is geodesic,
and each marked boundary point has opening angle $\pi W/2$.
\end{lemma}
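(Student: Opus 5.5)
The lemma is a direct calculation with the explicit profile \eqref{setup:classical}, and I will do it in four steps.

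\emph{Area and Liouville equation.} Write $\phi_W=\log\frac W{4\pi}+q_W$ with $q_W(t)=-2\log\cosh(Wt/2)$. Since $\int_\R\sech^2(Wt/2)\,dt=4/W$, the area is
\[
 \int_{\Sstrip}e^{\phi_W}\,d^2z=\frac W{4\pi}\cdot\pi\cdot\frac4W=1 .
\]
The field is independent of $\theta$, so $\partial_n\phi_W=\pm\partial_\theta\phi_W=0$ on $\theta\in\{0,\pi\}$. Differentiating twice gives $q_W'=-W\tanh(Wt/2)$ and $q_W''=-\frac{W^2}{2}\sech^2(Wt/2)$. Hence
\[
 -\Delta\phi_W=\frac{W^2}{2}\sech^2(Wt/2)=2\pi W\cdot\frac W{4\pi}\sech^2(Wt/2)=2\pi We^{\phi_W}.
\]

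\emph{Curvature and geodesic boundary.} For a metric $e^{\phi}|dz|^2$ the Gaussian curvature is $K=-\tfrac12e^{-\phi}\Delta\phi$. Substituting the Liouville equation gives $K=\pi W$, a positive constant. The geodesic curvature of a boundary curve for a conformal metric $e^{\phi}|dz|^2$ is $e^{-\phi/2}(\kappa_0+\tfrac12\partial_n\phi)$, where $\kappa_0$ is the Euclidean geodesic curvature. The horizontal lines $\theta=0,\pi$ have $\kappa_0=0$, and $\partial_n\phi_W=0$ there. So both boundary components are geodesics.

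\emph{Opening angle at the marked points.} This is the only step needing care. I would change coordinates near $t=+\infty$ by $w=e^{-z}$, which maps the end $\{t>T\}$ conformally onto a punctured half-disk neighborhood of $0$ in the upper half-plane. Note that $z=t+i\theta$ gives $w=e^{-t}e^{-i\theta}$, so one should use $\bar w$ or $e^{-\bar z}$; the orientation is immaterial for the angle. Then $|w|=e^{-t}$ and $|dz|=|dw|/|w|$. As $t\to\infty$, $e^{\phi_W}\sim\frac W{4\pi}\cdot4e^{-Wt}=\frac W\pi|w|^W$, so
\[
 e^{\phi_W}|dz|^2\sim\frac W\pi\,|w|^{W-2}|dw|^2 .
\]
The model metric $|w|^{2\beta-2}|dw|^2$ near a boundary point of the half-plane is, via $\zeta=w^\beta/\beta$, a Euclidean sector of angle $\beta\pi$. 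Here $2\beta-2=W-2$, so $\beta=W/2$ and the opening angle is $\pi W/2$. The end $t\to-\infty$ is the same by the symmetry $t\mapsto-t$.

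To make the asymptotic equivalence rigorous, I would write the metric exactly as $|w|^{W-2}e^{\psi(w)}|dw|^2$, where
\[
 \psi=\log\frac W\pi-2\log(1+|w|^W)
\]
is continuous up to $w=0$. The angle is then determined by the exponent alone, and $W<2$ ensures a genuine corner of angle less than $\pi$.

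As a consistency check, Gauss--Bonnet on this geodesic bigon gives $\int K\,dA=2\pi-2(\pi-\pi W/2)=\pi W$, which matches $K\cdot\text{area}=\pi W$.
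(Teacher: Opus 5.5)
Your proposal is correct and follows the paper's proof: the same area and Liouville computations, the same curvature formula, and the same exponential coordinate giving $e^{\phi_W}|dz|^2\sim\frac W\pi|w|^{W-2}|dw|^2$ at each end. The only minor difference is in reading off the angle $\pi W/2$: you map to a Euclidean sector via $w^{W/2}/(W/2)$, using the exact continuous factor $e^{\psi}$, whereas the paper divides the semicircle length $\pi\sqrt{W/\pi}\,|\zeta|^{W/2}$ by the distance $\sqrt{W/\pi}\,(2/W)|\zeta|^{W/2}$.
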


\begin{proof}
The identity $\int_\R\sech^2(Wt/2)\,dt=4/W$ gives area one.
Differentiation gives
$\Delta\phi_W=-(W^2/2)\sech^2(Wt/2)=-2\pi W e^{\phi_W}$,
while $\phi_W$ is independent of $\theta$.
The curvature formula $-e^{-\phi_W}\Delta\phi_W/2$ gives $\pi W$;
the zero normal derivative along the straight horizontal boundary gives
geodesic boundary.  Near the negative end use $\zeta=e^{t+i\theta}$.
Then
\[
 e^{\phi_W}|dz|^2
 \sim\frac W\pi|\zeta|^{W-2}|d\zeta|^2.
\]
The distance to the end is asymptotic to
$\sqrt{W/\pi}(2/W)|\zeta|^{W/2}$, and the semicircle length is
asymptotic to $\pi\sqrt{W/\pi}|\zeta|^{W/2}$.  Their ratio is
$\pi W/2$.  The coordinate $\zeta=e^{-(t+i\theta)}$ gives the same
calculation at the positive end.
\end{proof}

The disintegration \eqref{setup:disintegration} is the source of the
large exponent $W/\gamma^2$.  No translation invariance of the radial
field is used.

\section{A joint large-deviation principle for the fields and area}
\label{decomp:section}\label{ldp:section}

The goal of this section is to prove one joint LDP for the radial
Gaussian input, the lateral field, and the total area, allowing an
arbitrary fixed finite-energy radial shift. This single result will
cover both the original law and the mass appearing after Girsanov.
Subsection~\ref{ldp:coordinates} represents the radial field as a
continuous reflection of Brownian paths and specifies the allowed
shifts; Subsection~\ref{ldp:gaussian-section} gives the Gaussian LDP
and its energy. Subsection~\ref{ldp:deterministic} controls deterministic
exponentials on bounded energy sets. Subsection~\ref{mom:section}
proves the high-moment estimates that make Fourier and spatial cutoff
errors negligible at speed $\gamma^{-2}$, and
Subsection~\ref{reg:section} identifies these cutoff masses with the
canonical area. Subsection~\ref{ldp:joint-section} then adjoins the area
by continuous approximation, with the order of all cutoffs specified:
the joint rate is the Gaussian energy on the graph of the deterministic
area, and infinity elsewhere. Finally,
Subsection~\ref{ldp:radial-section} expresses this same result in the
original radial-field coordinates. This establishes the probability
LDP needed for the unit-area expectation; the unbounded area reward
and its optimizer are treated in Section~\ref{rad:section}.

\subsection{Gaussian coordinates and radial reflection}
\label{ldp:coordinates}

Our goal here is to express the radial field as a deterministic map of
an independent Gaussian input, so that the LDP can start from a Gaussian
law. We first give the exact Brownian representation, then choose the
path and energy spaces and record the continuity and tail bounds for
the reflection map. Allowing a fixed finite-energy displacement in that
map will make the later Girsanov calculation an application of the same
LDP.

The Neumann GFF Dirichlet space consists of functions of finite
Dirichlet energy, modulo additive constants, with squared norm
$(2\pi)^{-1}\int_{\Sstrip}|\nabla f|^2d^2z$. It decomposes as
$\mathcal H_1(\Sstrip)\oplus\mathcal H_2(\Sstrip)$, where
$\mathcal H_1$ consists of functions with zero vertical mean and
$\mathcal H_2$ consists of functions constant on vertical segments.
The disk construction uses the conditioned radial law in the second
component, giving
\[
 \widehat h(t+i\theta)=R_\gamma(t)+X^\perp(t+i\theta).
\]
The two fields are independent. The process $R_\gamma$ is pinned by
$R_\gamma(0)=0$. On $[0,\infty)$ it is a Brownian motion of variance
parameter two and drift $-W/\gamma+\gamma/2$, conditioned to stay
nonpositive; the process on the negative half-line is an independent
time reversal with the analogous drift. The field $X^\perp$ is the
$\mathcal H_1$-projection of the Neumann GFF.
In particular, for every cutoff $N$ and every $t,s,\vartheta$,
\[
 \int_0^\pi X_N^\perp(t,\theta)d\theta=0,
 \qquad
 \int_0^\pi G_{\perp,N}((t,\theta),(s,\vartheta))d\theta=0.
\]
Each identity follows by integrating $\cos(n\theta)$, $n\ge1$.
The covariance identity passes to the full kernel by its integrable
logarithmic bound in Lemma~\ref{mom:covariance-bound}.
This centering remains part of the Gaussian law throughout the proof.

\subsubsection{The Brownian representation of the radial field}

We now identify the radial law as a reflection of Brownian motion.
We retain $\widehat h=R_\gamma+X^\perp$, with $X^\perp$ denoting
only the lateral field.  Let $B$ be the continuous two-sided process
with $B(0)=0$ whose restrictions $t\mapsto B(t)$ and
$t\mapsto B(-t)$, $t\geq0$, are independent variance-two Brownian
motions: each increment over an interval of length $t$ has variance
$2t$. It is independent of $X^\perp$.  The following form of the
Pitman transform is the entry point for the radial analysis.

\begin{lemma}[Drifted Pitman representation]
\label{setup:Pitman}
For $0<\gamma^2<2W$, the process $R_\gamma$ on either half-line
has the law of
\[
 B(t)-\left(\frac W\gamma-\frac\gamma2\right)t
 -2\sup_{0\leq s\leq t}
       \left\{B(s)-\left(\frac W\gamma-\frac\gamma2\right)s\right\},
 \qquad t\geq0.
\]
The two half-lines in this representation are independent.
\end{lemma}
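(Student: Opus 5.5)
The plan is to reduce the statement to the Rogers--Pitman theorem for drifted Brownian motion. Independence of the two half-lines is automatic, because the two halves of $B$ are independent and each half of the representation is a measurable functional of one half of $B$. So it suffices to treat $t\ge0$. Put $\mu=W/\gamma-\gamma/2$, which is strictly positive exactly when $\gamma^2<2W$. Set $X(t)=B(t)-\mu t$ and $M(t)=\sup_{0\le s\le t}X(s)$. The claim is that $X-2M$ has the law of $R^+$. Equivalently, $Y:=2M-X$ should have the law of $-R^+$, which is a variance-two Brownian motion with drift $+\mu$ started at $0$ and conditioned to stay nonnegative, in the Doob-transform entrance sense of Subsection~\ref{setup:disk-section}.

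First I normalize the variance. Write $B(t)=\sqrt2\,\beta(t)$ with $\beta$ a standard Brownian motion, so that $X=\sqrt2\,(\beta(t)-\nu t)$ with $\nu=\mu/\sqrt2$. Since $x\mapsto 2\sup x-x$ commutes with multiplication by $\sqrt2$, it suffices to prove the statement for standard Brownian motion with drift $-\nu$. The Rogers--Pitman theorem (Rogers and Pitman, 1981) states that for standard Brownian motion $\beta^{(-\nu)}$ with drift $-\nu$, the process $2M-\beta^{(-\nu)}$ is a time-homogeneous diffusion started at $0$. Its generator is
\[
\mathcal L f=\tfrac12 f''+\nu\coth(\nu x)\,f' .
\]
The generator depends on $\nu$ only through $\nu^2$ and $\nu\coth(\nu x)$, so the sign of the drift is irrelevant.

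Second, I identify this diffusion with the conditioned law. Consider standard Brownian motion with drift $+\nu$ started at $x>0$. Its probability of never hitting $0$ is $h(x)=1-e^{-2\nu x}$, which is harmonic for $\tfrac12\partial_x^2+\nu\partial_x$ and vanishes at $0$. The Doob $h$-transform has drift $\nu+h'/h=\nu+2\nu e^{-2\nu x}/(1-e^{-2\nu x})=\nu\coth(\nu x)$, which is exactly $\mathcal L$. Negating maps "drift $-\nu$ conditioned to stay nonpositive" onto "drift $+\nu$ conditioned to stay nonnegative". Hence the law of $R^+$ started from $-\varepsilon$ is the law of the $\mathcal L$-diffusion started from $\varepsilon$, up to negation and the $\sqrt2$ scaling.

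The remaining point, which I expect to be the only delicate step, is the entrance law at $0$. The definition in Subsection~\ref{setup:disk-section} takes the limit $\varepsilon\downarrow0$ of the process started at $-\varepsilon$, whereas Rogers--Pitman produces the $\mathcal L$-diffusion started at $0$. The boundary $0$ is an entrance-not-exit boundary for $\mathcal L$: near $0$ the drift is $\coth(\nu x)\nu\sim1/x$, as for a three-dimensional Bessel process. I would argue Feller continuity in the starting point as follows. Run the $\mathcal L$-diffusion from $0$ and let $\tau_\varepsilon$ be its hitting time of $\varepsilon$, which tends to $0$ almost surely as $\varepsilon\downarrow0$ by path continuity. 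By the strong Markov property, the post-$\tau_\varepsilon$ process has the law started from $\varepsilon$. Consequently the laws started from $\varepsilon$ converge, in the local uniform topology, to the law started from $0$. This identifies the Rogers--Pitman process with the entrance law in the definition, and undoing the negation and the $\sqrt2$ scaling gives the stated representation on each half-line.
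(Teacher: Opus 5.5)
Your proposal is correct and follows essentially the paper's route: identify $2M-X$ (after the harmless $\sqrt2$ scaling) as the diffusion with drift $\nu\coth(\nu x)$, match it with the survival Doob transform of drifted Brownian motion, and pass to the entrance law at zero. The differences are that you cite the drifted Rogers--Pitman theorem, whereas the paper derives it from the driftless Pitman theorem. The paper uses the conditional uniformity of $M_t$ on $[0,Y_t]$ and averages the Girsanov density to get the space--time harmonic function $e^{-t\mu^2/4}\sinh(\mu Y_t/2)/(\mu Y_t/2)$ for Bessel(3). Your strong Markov argument at the hitting time $\tau_\varepsilon$ also makes explicit the $\varepsilon\downarrow0$ entrance-law limit, which the paper only asserts.
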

\begin{proof}
On one half-line put $M_t=\sup_{s\leq t}B(s)$ and
$Y_t=2M_t-B(t)$.  Pitman's theorem \cite{Pitman75} says that $Y$
is the variance-two three-dimensional Bessel process, with generator
$\partial_y^2+(2/y)\partial_y$ and entrance point zero.  Brownian
recurrence gives the pathwise identity
\[
 M_t=\inf_{s\geq t}Y_s.
\]
Indeed, $Y_s\geq M_s\geq M_t$ for $s\geq t$, and the first future
time at which $B$ hits $M_t$ gives equality.  The probability that
this Bessel process, started from $y$, hits $x\in(0,y)$ is $x/y$:
solve $g''+(2/y)g'=0$ with value one at $x$ and zero at infinity.
The Markov property therefore shows that, conditional on the path
of $Y$ up to time $t$, $M_t$ is uniform on $[0,Y_t]$.  Accordingly
$B(t)=2M_t-Y_t$ is conditionally uniform on $[-Y_t,Y_t]$.

Giving $B$ drift $-W/\gamma+\gamma/2$ up to time $t$ multiplies its
law by
\[
 \exp\!\left[-\frac12\left(\frac W\gamma-\frac\gamma2\right)B(t)
       -\frac t4\left(\frac W\gamma-\frac\gamma2\right)^2\right].
\]
Conditional expectation given the path of $Y$ up to time $t$ is
\[
 e^{-\frac t4(W/\gamma-\gamma/2)^2}
 \frac{\sinh((W/\gamma-\gamma/2)Y_t/2)}
      {(W/\gamma-\gamma/2)Y_t/2}.
\]
The ratio has value one at zero and is an eigenfunction of the
Bessel generator with eigenvalue $(W/\gamma-\gamma/2)^2/4$.
Its Doob transform has generator
\[
 \partial_y^2+
 \left(\frac W\gamma-\frac\gamma2\right)
 \coth\!\left[\frac12\left(\frac W\gamma-\frac\gamma2\right)y\right]
 \partial_y.
\]
Negating this process gives the transform in the statement, with
its entrance law at zero.  On the other hand, variance-two
Brownian motion with drift $-W/\gamma+\gamma/2$, started from
$x<0$, avoids zero with probability
$1-e^{(W/\gamma-\gamma/2)x}$.  Its survival Doob transform has drift
\[
 -\left(\frac W\gamma-\frac\gamma2\right)
 -\frac{2(W/\gamma-\gamma/2)e^{(W/\gamma-\gamma/2)x}}
              {1-e^{(W/\gamma-\gamma/2)x}}
 =-\left(\frac W\gamma-\frac\gamma2\right)
       \coth\!\left[-\frac12\left(\frac W\gamma-\frac\gamma2\right)x\right].
\]
This is the same diffusion generator.  The entrance law is the
limit as $x\uparrow0$, proving the stated conditioning convention.
Applying the construction to independent Brownian halves proves
the last assertion.
\end{proof}

Consequently, the total mass before unit-area normalization has the
representation
\begin{equation}
\begin{aligned}
 Z_\gamma(\gamma R_\gamma)\ \overset{\mathrm{law}}=
 \sum_{\sigma\in\{-1,1\}}\int_0^\infty\int_0^\pi
 &(2\sin\theta)^{-\gamma^2/2}:e^{\gamma X^\perp(\sigma t,\theta)}:\\[-2pt]
 &\times\exp\!\left\{\begin{aligned}
 &\gamma B(\sigma t)-(W-\gamma^2/2)t\\[-2pt]
 &-2\sup_{0\leq s\leq t}
         \{\gamma B(\sigma s)-(W-\gamma^2/2)s\}
 \end{aligned}\right\}\dd\theta\,\dd t.
\end{aligned}
\label{rad:Pitman-mass}
\end{equation}
Only the lateral field is Wick ordered in this formula; the radial
Brownian motions carry ordinary exponential weights.  The two
restrictions of $X^\perp$ belong to the same field and are not
independent.

For continuity and tail estimates, define the radial Banach space
\begin{equation}
 \mathcal B=\left\{u\in C(\R):u(0)=0,\quad
 \lim_{|t|\to\infty}\frac{u(t)}{(1+|t|)^{3/4}}=0\right\},
 \qquad
 \|u\|_*:=\sup_{t\in\R}\frac{|u(t)|}{(1+|t|)^{3/4}}.
 \label{setup:path-space}
\end{equation}
Every finite-energy path belongs to this space by
$|u(t)|\leq\sqrt{|t|}\,\|u'\|_{L^2(\R)}$.
It contains the Brownian paths almost surely; the corresponding
Gaussian realization and exponential integrability are verified
in Subsection~\ref{ldp:gaussian-section}.

For $u\in\mathcal B$, $t\ge0$, and $\sigma\in\{-1,1\}$, define the
unshifted reflection map by
\begin{equation}
 \begin{split}
 d_\gamma(u;\sigma t)
 &=u(\sigma t)-(W-\gamma^2/2)t\\
 &\quad-2\sup_{0\le r\le t}
             \{u(\sigma r)-(W-\gamma^2/2)r\}.
 \end{split}
 \label{ldp:unshifted-map}
\end{equation}
Lemma~\ref{setup:Pitman} identifies $d_\gamma(\gamma B)$ with
$\gamma R_\gamma$ in law, jointly with the independent lateral field.
No change of measure has been made.  The supremum contains zero and
the current value, so $d_\gamma(u)\le0$.  For $\gamma^2\le W$,
\begin{equation}
 e^{d_\gamma(u;t)}
 \le\min\{1,e^{-W|t|/2+\|u\|_*(1+|t|)^{3/4}}\}.
 \label{ldp:unshifted-envelope}
\end{equation}
The running maximum is Lipschitz on bounded time intervals, and
\[
 |d_\gamma(u;t)-d_0(u;t)|\le\gamma^2|t|/2.
\]
Indeed, adding $\gamma^2r/2$ to the path inside the maximum changes
that maximum by a number between zero and $\gamma^2|t|/2$.
Consequently these maps are continuous for local uniform convergence,
and $d_\gamma\to d_0$ uniformly on each bounded time interval,
uniformly in the input $u$.

\subsubsection{The lateral energy space}

We identify the lateral energy space so that its norm determines the
Gaussian cost and the deterministic area estimates below. The lateral
Cameron--Martin space is
\[
 \mathcal H_\perp=
 \left\{v\in H^1(\Sstrip):
       \int_0^\pi v(t,\theta)\,d\theta=0
       \text{ for almost every }t\right\}.
\]
Its inner product and norm are
\begin{equation}\label{gir:Hilbert}
 \langle v,h\rangle_{\mathcal H_\perp}
 =\frac1{2\pi}\int_{\Sstrip}\nabla v\cdot\nabla h,
 \qquad
 \|v\|_{\mathcal H_\perp}^2
 =\frac1{2\pi}\int_{\Sstrip}|\nabla v|^2.
\end{equation}
There is no prescribed boundary condition in this form domain.
The zero angular mean gives
$\int|v|^2\le\int|v_\theta|^2$, so the gradient norm is equivalent to
the $H^1$ norm on this space.

\subsubsection{A fixed displacement in the radial map}

We now make the reflection estimates uniform on bounded sets of inputs
for each fixed displacement. This is the only extension needed to use
the same LDP after the radial change of variables in Section~\ref{rad:section}.
Fix a locally absolutely continuous $g$ with $g(0)=0$ and
$g'\in L^2(\R)$. Then $g\in\mathcal B$, and we use
$d_\gamma(u+g)$ while keeping the Gaussian input $u$ centered.
In particular, the cost will be the energy of $u$, not of $u+g$.
For $\|u\|_*\le r$ and $\gamma^2\le W$,
\begin{equation}\label{ldp:tail-envelope}
 e^{d_\gamma(u+g;t)}
 \le\min\{1,e^{-W|t|/2+(r+\|g\|_*)(1+|t|)^{3/4}}\}.
\end{equation}
All limits below hold for each fixed $g$; no uniformity over unbounded
sets of shifts is asserted.

For the moment and regularization estimates, $f_\gamma:\R\to\R$ denotes a deterministic
family of continuous radial exponents, extended to the strip by
$f_\gamma(t,\theta)=f_\gamma(t)$.  We assume $f_\gamma\le0$,
$f_\gamma\to f_0$ locally uniformly as $\gamma\downarrow0$, and that
there is a fixed $r\ge0$ such that, for all sufficiently small $\gamma$,
\begin{equation}
 e^{f_\gamma(t)}
 \le\min\left\{1,
       4\exp\left[-\frac W2|t|+r(1+|t|)^{3/4}\right]\right\}.
 \label{decomp:radial-envelope}
\end{equation}
The same bound holds for $e^{f_0}$ by passage to the limit.
The parameter $r$ only measures the size of this deterministic envelope.
This formulation permits the lateral analysis to be carried out before
choosing a radial path; independent random radial densities will be
handled by conditioning.  The mass notation $Z_\gamma(f_\gamma+v)$
uses $f_\gamma(t)+v(t,\theta)$ as its exponent for a deterministic
shift $v$ on the strip, and
$Z_0(f_0+v)=\int_{\Sstrip}e^{f_0(t)+v(t,\theta)}\,\dd t\,\dd\theta$.

\subsection{The Gaussian LDP}
\label{ldp:gaussian-section}

The goal of this subsection is a good LDP for the centered Gaussian
pair $(\gamma B,\gamma X^\perp)$, where $B$ is the two-sided
variance-two Brownian input and $X^\perp$ is the independent
zero-angular-mean lateral field from Subsection~\ref{ldp:coordinates}.
We also establish compact energy sublevels and exponential tightness. These
properties will permit continuous finite-cutoff contraction and the
removal of the cutoffs.

We recall the meaning of a large-deviation principle (LDP). Let
$Y_\gamma$ be random variables in a metrizable space and let $I$ be
a lower semicontinuous function from that space to $[0,+\infty]$.
Their laws have an LDP at speed $\gamma^{-2}$ with rate $I$ if
\[
 \liminf_{\gamma\downarrow0}\gamma^2\log\P(Y_\gamma\in O)
       \ge-\inf_O I,
 \qquad
 \limsup_{\gamma\downarrow0}\gamma^2\log\P(Y_\gamma\in C)
       \le-\inf_C I
\]
for open $O$ and closed $C$. We use the conventions $\inf\varnothing=+\infty$ and
$\log0=-\infty$. The rate is called good when its sublevel sets
are compact. Exponential tightness means that, for every $L>0$, there
is a compact set $K$ for which the last logarithmic upper bound for
$\P(Y_\gamma\notin K)$ is at most $-L$. All spaces used below are
separable and metrizable.

We use the radial Banach space $\mathcal B$ from
\eqref{setup:path-space} and the cosine Sobolev spaces from
\eqref{setup:spaces}. Recall
\begin{equation}\label{ldp:spaces}
 \widehat E_s=C_{\mathrm{loc}}(\R;H_N^{-s}(0,\pi)),\qquad
 E_s=\{v\in\widehat E_s:\langle v(t),e_0\rangle=0\text{ for all }t\}.
\end{equation}
Their topology is uniform convergence on bounded time intervals in
the indicated angular norm. The lateral Cameron--Martin space is
$\mathcal H_\perp$, defined earlier.

The Gaussian rate at a finite-energy pair $(u,v)$ is
\begin{equation}\label{ldp:gaussian-cost}
 I_G(u,v):=\frac14\int_\R|u'(t)|^2\,dt
 +\frac1{4\pi}\int_{\Sstrip}|\nabla v|^2\,d^2z.
\end{equation}
It is infinite unless $u$ is locally absolutely continuous with
$u'\in L^2(\R)$ and $v\in\mathcal H_\perp$.

\begin{lemma}\label{ldp:gaussian}
The laws of $(\gamma B,\gamma X^\perp)$ satisfy a good large deviation
principle on $\mathcal B\times E_s$, at speed $\gamma^{-2}$,
with the rate specified in \eqref{ldp:gaussian-cost}.
These laws are exponentially tight.
\end{lemma}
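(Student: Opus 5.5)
The proof will apply the abstract Gaussian LDP (Schilder's theorem in its Borell--Sudakov/Ledoux form). Since $B$ and $X^\perp$ are independent, each is handled separately and the results are combined by the product LDP. Recall the standard fact: if $Y$ is a centered Gaussian random element of a separable Banach space, or of a separable Fréchet space, with Cameron--Martin space $H$, then $\gamma Y$ satisfies a good LDP at speed $\gamma^{-2}$ with rate $\tfrac12\|h\|_H^2$ on $H$ and $+\infty$ elsewhere. Exponential tightness follows from Fernique's theorem together with compactness of the unit ball of $H$ in the ambient topology.

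For the radial component, the first task is to realize $B$ as a Gaussian element of the separable Banach space $\mathcal B$. Each half is a variance-two Brownian motion, so the law of the iterated logarithm gives $B(t)/(1+|t|)^{3/4}\to0$ almost surely. Separability holds because $\mathcal B$ is isometric, via $u\mapsto u/(1+|t|)^{3/4}$, to a closed subspace of $C_0(\R)$. The Cameron--Martin space of the two-sided variance-two Brownian motion consists of the paths with $u(0)=0$ and $u'\in L^2$, with norm $\tfrac12\int|u'|^2$. The rate is therefore $\tfrac14\int|u'|^2$. One checks that the norm of $\mathcal B$ is continuous on it, by the bound $|u(t)|\le\sqrt{|t|}\|u'\|_{L^2}$, and that its unit ball is compact in $\mathcal B$. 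Compactness comes from Arzelà--Ascoli on bounded intervals together with the uniform tail bound $|u(t)|/(1+|t|)^{3/4}\le \|u'\|(1+|t|)^{-1/4}$.

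For the lateral component, I would first show that $X^\perp$ defines a random element of $E_s$. In modes, $\langle X^\perp(t),e_n\rangle=\sqrt{\pi}\,n^{-1/2}U_n(t)$ is a stationary Ornstein--Uhlenbeck process. Hence $\E\|X^\perp(t)-X^\perp(t')\|^2_{H_N^{-s}}\lesssim\sum_n(1+n^2)^{-s}n^{-1}\min(1,n|t-t'|)\lesssim|t-t'|^{\alpha}$ for some $\alpha=\alpha(s)>0$. Gaussian hypercontractivity upgrades this to all moments, and Kolmogorov's criterion gives a continuous version with values in $H_N^{-s}$. Equivalently, the partial sums $X_N^\perp$ converge in $L^2(\P;C([-T,T];H_N^{-s}))$ for each $T$. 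The space $E_s$ is a closed subspace of the separable Fréchet space $\widehat E_s$, and $X^\perp$ lies in $E_s$ because each mode has $n\ge1$. The Cameron--Martin space is then identified from the covariance $G_\perp$: it is the Dirichlet-form space $\mathcal H_\perp$ with norm $(2\pi)^{-1}\int|\nabla v|^2$, since $G_\perp$ is the Neumann Green function restricted to $\mathcal H_1$ and the Dirichlet norm was normalized this way in Subsection~\ref{ldp:coordinates}. This gives the rate $\tfrac1{4\pi}\int|\nabla v|^2$.

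Goodness of the lateral rate requires compactness of bounded sets of $\mathcal H_\perp$ in $E_s$. I expect this to be the main technical point, because $\Sstrip$ is unbounded and the topology of $E_s$ is only local in time, so global compactness is not needed. On $[-T,T]$, an $H^1$-bounded family has time derivatives bounded in $L^2([-T,T];L^2)$, hence is equicontinuous in $H_N^{-s}$, in fact $\tfrac12$-Hölder in $L^2$. At each fixed time the family is bounded in $H^{1/2}$ by the trace theorem, which embeds compactly in $H_N^{-s}$. Arzelà--Ascoli on each interval followed by a diagonal argument then gives compactness. Fernique's theorem applied to the Fréchet seminorms supplies the exponential tightness.

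Finally, the two marginal LDPs are combined. For independent families, each exponentially tight with a good rate, the product laws satisfy an LDP with the sum of the rates on $\mathcal B\times E_s$. The product of good rates is good, and the product of compact sets gives exponential tightness. This yields exactly $I_G$ as in \eqref{ldp:gaussian-cost}. Alternatively, one can view $(B,X^\perp)$ as a single Gaussian element of $\mathcal B\times E_s$ whose Cameron--Martin space is the orthogonal sum of the two.
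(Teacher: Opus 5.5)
The gap is in exponential tightness. The rest is sound, and it is packaged differently from the paper. You invoke the Gaussian LDP directly on the Fr\'echet space, either on each factor followed by a product lemma or on the pair, and identify the Cameron--Martin space of $X^\perp$ globally as $\mathcal H_\perp$. The paper instead restricts the lateral field to $[-j,j]$ so as to stay in Banach spaces, and applies the Banach-space result to $(B,X^\perp|_{[-j,j]})$ jointly. It then uses Dawson--G\"artner, which forces it to check that the supremum of the restricted rates is the global Dirichlet energy. Your goodness argument, via the trace into $H^{1/2}$ at fixed times and $\tfrac12$-H\"older continuity in $L^2$, is an acceptable variant of the paper's $L^2$ bound.

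Exponential tightness is part of the statement, and your product step, as you phrase it, presupposes it. Fernique's theorem applied to $\|\cdot\|_*$ or to the seminorms $\sup_{|t|\le T}\|v(t)\|_{H_N^{-s}}$ only gives bounds of the form $\P(\gamma Y\notin rK)\le Ce^{-\epsilon r^2/\gamma^2}$, where $rK$ is a closed norm or seminorm ball. Such balls are not compact in $\mathcal B$ or in $E_s$. Compactness of the Cameron--Martin unit ball $U_H$ does not rescue this pairing, because $\gamma Y$ almost surely lies outside the Cameron--Martin space.

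Either of two standard tools closes the gap.
\begin{itemize}
\item \emph{Borell's isoperimetric inequality.} For a compact $K$ with $\P(Y\in K)\ge\frac12$ it gives $\P(Y\notin K+rU_H)\le e^{-r^2/2}$. Hence $\P(\gamma Y\notin[0,1]K+\sqrt{2L}\,U_H)\le e^{-L/\gamma^2}$ for $\gamma\le1$. The set $[0,1]K+\sqrt{2L}\,U_H$ is compact precisely because $U_H$ is, so this is where your compactness of $U_H$ actually enters.
\item \emph{The paper's route.} Apply Fernique to a stronger, almost surely finite norm whose balls are compact in the ambient space; this is Baldi's compactly embedded Gaussian Banach space. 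For the lateral field, a time-H\"older norm with values in $H_N^{-s'}$, $s'<s$, would serve. Then choose radii of cost $L+j$ on the window $[-j,j]$. The union bound $\sum_j e^{-(L+j)/\gamma^2}$ keeps rate $-L$, and the intersection over $j$ of the windowed compact sets is compact by a diagonal argument.
\end{itemize}
With either repair your proof is complete.
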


\begin{proof}
The Brownian motion belongs to $\mathcal B$ almost surely.  Indeed, the
reflection principle gives, for each $\epsilon>0$,
\[
 \sum_{n\ge1}\P\!\left(\sup_{|t|\le n+1}|B(t)|>
                                  \epsilon n^{3/4}\right)
 \le 8\sum_{n\ge1}e^{-\epsilon^2 n^{3/2}/(4(n+1))}<\infty.
\]
Borel--Cantelli, for rational $\epsilon>0$, proves the required weighted
tail limit.  The resulting law is a centered Radon Gaussian law on the
separable space $\mathcal B$. Its Cameron--Martin squared norm is
$\frac12\int|u'|^2$, because its variance on each half-line is $2t$.
Fernique's theorem states that a centered Gaussian random variable $Y$
in a separable Banach space satisfies
$\E\exp(\epsilon\|Y\|^2)<\infty$ for some $\epsilon>0$.
Applied to the Brownian path as a random element of $\mathcal B$, it gives
\begin{equation}\label{ldp:fernique}
 \E e^{\epsilon\|B\|_*^2}<\infty
 \quad\text{for some }\epsilon>0.
\end{equation}
Here this exponential integrability also has an elementary Brownian
proof. If $\|B\|_*>r$, either $\sup_{|t|\le1}|B(t)|>r$ or, for some
$k\ge0$, $\sup_{|t|\le2^{k+1}}|B(t)|>r2^{3k/4}$.
The reflection principle and the Gaussian tail bound, on the two
half-lines, therefore give for $r\ge1$
\[
 \P(\|B\|_*>r)
 \le8e^{-r^2/4}+8\sum_{k\ge0}e^{-r^2 2^{k/2}/8}
 \le C e^{-c r^2}
\]
for constants $C,c>0$. In the last inequality one can use
$2^{k/2}\ge1+(\sqrt2-1)k$ and sum a geometric series.
Integrating this tail proves \eqref{ldp:fernique} for $0<\epsilon<c$.
In particular, it gives
$\P(\|\gamma B\|_*>r)\le C_\epsilon e^{-\epsilon r^2/\gamma^2}$,
where $C_\epsilon=\E e^{\epsilon\|B\|_*^2}$; this is the estimate
used later to control exceptional radial paths.

The construction of $X^\perp$ from independent angular modes gives, for
$0<|h|\le1$,
\[
 \E\|X^\perp(t+h)-X^\perp(t)\|_{H_N^{-s}}^2
 \lesssim_s\sum_{n\ge1}n^{-1-2s}\min\{1,n|h|\}.
\]
For every $0<r<\min\{2s,1\}$, splitting this sum at $|h|^{-1}$
bounds it by a constant times $|h|^r$. Gaussian higher moments and
the continuity criterion, applied on each bounded time interval,
give an $E_s$-valued version. More explicitly, for an integer $k$
large enough that $kr/2>1$, a centered Hilbert-valued Gaussian variable
satisfies
$\E\|Y\|^k\lesssim_k(\E\|Y\|^2)^{k/2}$.
The increment bound is then $O(|h|^{kr/2})$, so Kolmogorov's
continuity criterion applies. Taking a countable exhaustion of the
time line produces compatible continuous versions. In the normalized
cosine coordinates,
the $n$th covariance is $(\pi/n)e^{-n|t-t'|}$. Applying
$-d^2/dt^2+n^2$ to this kernel gives $2\pi\delta_0$; hence the
Cameron--Martin squared norm of $X^\perp$ is
$\frac1{2\pi}\int|\nabla v|^2$. To apply the Banach-space Gaussian
LDP \cite{Baldi,DZ}, first restrict the lateral field to $[-j,j]$.
The pair $Y_j=(B,X^\perp|_{[-j,j]})$ is a centered Radon Gaussian
random variable in the separable Banach space
$\mathcal B\times C([-j,j];H_N^{-s}(0,\pi))$, with zero angular
mean in its second coordinate. Its small-noise rate is one half
of its squared Cameron--Martin norm. The restricted lateral norm
is the infimum of the global norm over extensions. The
Dawson--G\"artner projective-limit theorem \cite{DZ} therefore
gives an LDP on $\mathcal B\times E_s$ with rate the supremum
of these restricted rates. This supremum equals
\eqref{ldp:gaussian-cost}: if it is finite, choose global
Cameron--Martin extensions on $[-j,j]$ with uniformly bounded
energies. A weakly convergent subsequence in $\mathcal H_\perp$
agrees with the specified field on every bounded interval and
has no larger energy. The converse inequality follows by restricting
any global extension. Independence adds the radial cost, giving
the factors $1/4$ and $1/(4\pi)$.

Exponential tightness in the projective-limit topology can be checked
separately. By the compactly embedded Gaussian Banach space of
\cite[Theorem 1.1 and Section 2]{Baldi}, each $Y_j$ has an almost
surely finite norm $\|\cdot\|_j^\sharp$ whose balls have compact
closure in its ambient Banach space, and
$C_j:=\E\exp(\epsilon_j\|Y_j\|_j^{\sharp\,2})<\infty$ for some
$\epsilon_j>0$. Given $L>0$, choose $r_j$ so that
$\epsilon_jr_j^2\ge L+j+\log C_j$, and let $K_j$ be the closure
of that norm ball. For $0<\gamma\le1$, Markov gives
\[
 \P(\gamma Y_j\notin K_j)
 \le C_j e^{-\epsilon_jr_j^2/\gamma^2}
 \le e^{-(L+j)/\gamma^2}.
\]
The set of pairs whose restriction belongs to $K_j$ for every $j$
is compact in $\mathcal B\times E_s$, by a diagonal subsequence
and compatibility of the restrictions. A union bound bounds its
complement by $\sum_{j\ge1}e^{-(L+j)/\gamma^2}$, whose logarithmic
upper rate is at most $-L$. This proves exponential tightness
without inferring it merely from compactness of rate sublevels.

The rate sublevels are compact in the stated topology. A radial
energy bound gives local equicontinuity and $|u(t)|\lesssim\sqrt{|t|}$;
the latter makes the weighted tails uniformly small. For the
lateral component, zero angular mean gives
$\|v\|_{L^2(\Sstrip)}\le\|v_\theta\|_{L^2(\Sstrip)}$. A bounded
energy set therefore satisfies
\[
 \sup_t\|v(t)\|_{L^2(0,\pi)}^2
 \le2\|v\|_{L^2(\Sstrip)}\|v_t\|_{L^2(\Sstrip)},\qquad
 \|v(t+h)-v(t)\|_{L^2(0,\pi)}
 \le |h|^{1/2}\|v_t\|_{L^2(\Sstrip)}.
\]
The compact embedding $L^2(0,\pi)\hookrightarrow H_N^{-s}(0,\pi)$
and Arzela--Ascoli give compactness in $E_s$. Weak lower
semicontinuity of both energies gives closedness of the sublevels.
\end{proof}

\subsection{Deterministic exponential estimates}\label{ldp:deterministic}

To adjoin the area to the Gaussian LDP, Subsection~\ref{ldp:joint-section}
will approximate the deterministic integral
$\int_{\Sstrip}e^{d_0(u+g;t)+v(t,\theta)}\,dt\,d\theta$
by restricting to $[-T,T]\times(0,\pi)$ and replacing $v$ by its first
$N$ angular modes. We need the resulting errors to vanish uniformly
when $I_G(u,v)$ is bounded. This subsection proves the exponential
integrability of $v$ that makes those approximations possible, using
the strip Onofri inequality and its unit-cell consequence. The sharp
inequality will also identify the optimizer in Subsection~\ref{var:section}.

\Needspace{8\baselineskip}
\subsubsection{The strip Onofri inequality}

We prove the sharp exponential inequality on the strip. Its constants
will determine the optimizer in Section~\ref{var:section}, while its
cell consequence below provides the uniform integrability needed for
the deterministic area map.

Recall the deterministic profile defined in \eqref{intro:classical}:
\[
 \phi_W(t,\theta)=\log\frac W{4\pi}-2\log\cosh\frac{Wt}{2},
 \qquad e^{\phi_W(t,\theta)}=\frac W{4\pi}\sech^2\frac{Wt}{2}.
\]
Its exponential integrates to one with respect to planar Lebesgue
measure. In this section and the next, we write
$d^2z=dt\,d\theta$ for that measure, with $z=t+i\theta$.

\begin{lemma}
\label{var:onofri}
For $W>0$, suppose that $g$ is $H^1$ on every bounded full strip,
has finite Dirichlet energy on the strip, and
$\int_{\Sstrip}g^2e^{\phi_W}\dd^2 z<\infty$.  Then
$\int e^{g+\phi_W}\dd^2 z$ is finite and
\begin{equation}
\label{var:onofri-formula}
 W\log\int_{\Sstrip}e^{g+\phi_W}\dd^2 z
 \le W\int_{\Sstrip}g e^{\phi_W}\dd^2 z
    +\frac1{4\pi}\int_{\Sstrip}|g_t|^2\dd^2 z
    +\frac{W^2}{16\pi}\int_{\Sstrip}|g_\theta|^2\dd^2 z.
\end{equation}
For $g$ independent of $\theta$, equality holds exactly when
\begin{equation}
\label{var:onofri-equality}
 g(t)=c-2\log\{1+\kappa\tanh(Wt/2)\},
 \qquad c\in\R,\quad -1<\kappa<1.
\end{equation}
\end{lemma}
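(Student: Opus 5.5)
Transfer the inequality to the sphere (or disk) via a conformal map, where Onofri's inequality is classical, and keep the anisotropic weights $1/(4\pi)$, $W^2/(16\pi)$ in view. The cleanest route is a Fourier split in $\theta$. Write $g=g_0(t)+g^\perp(t,\theta)$, where $g_0$ is the vertical mean and $g^\perp$ has zero angular mean. The right side then separates as
\[
W\int g_0e^{\phi_W}+\frac{1}{4\pi}\int|g_{0}'|^2\,d^2z+\frac{1}{4\pi}\int|g^\perp_t|^2+\frac{W^2}{16\pi}\int|g^\perp_\theta|^2,
\]
because $\phi_W$ is $\theta$-independent and $\int g^\perp e^{\phi_W}=0$.

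Next change variables $x=\tanh(Wt/2)\in(-1,1)$. Then $e^{\phi_W}\,dt\,d\theta=\frac{1}{2\pi}\,dx\,d\theta$, and $\int|g_t|^2\,dt=\frac W2\int(1-x^2)|g_x|^2\,dx$. Setting $\theta=\varphi/2$ with $\varphi\in(0,2\pi)$ and extending $g$ evenly in $\varphi$ turns $(x,\varphi)$ into the coordinates $x=\cos\vartheta$ and longitude on $S^2$ with normalized measure $dx\,d\varphi/(4\pi)$. One checks $\int|g_\theta|^2\,dt\,d\theta=\frac{4}{W}\int(1-x^2)^{-1}|g_\varphi|^2\,dx\,d\varphi$ up to the factor from $d\theta=d\varphi/2$. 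The right side of \eqref{var:onofri-formula}, divided by $W$, then becomes
\[
\int g\,d\sigma+\frac{1}{16\pi}\int\Bigl[(1-x^2)|g_x|^2+\frac{|g_\varphi|^2}{1-x^2}\Bigr]\,dx\,d\varphi=\int g\,d\sigma+\frac{1}{4}\cdot\frac{1}{4\pi}\int_{S^2}|\nabla g|^2\,d\sigma_{\rm Leb}\cdot(\text{const}).
\]
I would verify that the constants land exactly on the classical Onofri inequality $\log\int e^{g}\,d\sigma\le\int g\,d\sigma+\frac{1}{16\pi}\int_{S^2}|\nabla g|^2$. The weight $W^2/(16\pi)$ on $g_\theta$ is precisely what makes the angular and axial parts isotropic on the sphere after the doubling $\varphi=2\theta$. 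The left side becomes $\log\int e^g\,d\sigma$ plus the constant $\log$ of the total $e^{\phi_W}$ mass, which is one. The statement is then the even-in-$\varphi$ case of Onofri, proved by Onofri or by Beckner's inequality. Finiteness of $\int e^{g+\phi_W}$ follows from Moser--Trudinger on $S^2$, since the hypotheses give $g\in H^1(S^2)$. The hypothesis $\int g^2e^{\phi_W}<\infty$ is the $L^2(S^2)$ condition, and local $H^1$ plus finite energy handle the poles; a point has zero capacity, so no boundary term arises.

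For the equality case with $g=g(t)$, the function is axially symmetric on the sphere. Equality in Onofri holds exactly for $g=c+\log J_\Phi$ with $\Phi$ a Möbius transformation of $S^2$. Axial symmetry forces $\Phi$ to be a dilation along the axis. Its Jacobian at $x$ is $(1-\kappa^2)/(1+\kappa x)^2$, giving $g=c-2\log(1+\kappa\tanh(Wt/2))$ with $|\kappa|<1$. In strip coordinates this is a translation in $t$ of the profile $\phi_W$, consistent with the translation symmetry of the strip. Conversely, direct substitution confirms equality for these $g$.

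The main obstacle I expect is the bookkeeping that makes the anisotropic constants match Onofri exactly. In particular I must confirm that the doubling $\theta\mapsto2\theta$ is the right identification, so that an evenly extended Neumann function on $(0,\pi)$ gives a genuine $H^1(S^2)$ function with the stated energy. The second delicate point is justifying the approximation at the poles $x=\pm1$ ($t=\pm\infty$). There I would truncate $g$ by cutoffs in $t$ that are constant near the ends, use the fact that the capacity of the poles vanishes, and pass to the limit with Fatou on the left and dominated convergence on the right.
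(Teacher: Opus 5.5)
\textbf{Verdict: genuine gap.} Your strategy matches the paper's. You substitute $s=Wt/2$ (equivalently $x=\tanh(Wt/2)$), extend in the angle, apply the sharp spherical Onofri inequality, and classify equality by axial M\"obius dilations. The gap is at the step you flagged: the doubling $\varphi=2\theta$ is the wrong identification, and the argument fails with it.

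The transplanted function $g(t,\varphi/2)$, $\varphi\in(0,2\pi)$, jumps across the meridian $\varphi=0\equiv2\pi$ whenever $g(t,0)\ne g(t,\pi)$. Nothing in the hypotheses prevents this; take $g=\cos\theta\,\sech(Wt/2)$. So the function is not in $H^1(\mathbb S^2)$, and Onofri does not apply.

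The constants also do not come out as you claim. Your own formula $\int|g_\theta|^2\,dt\,d\theta=\frac4W\int(1-x^2)^{-1}|g_\varphi|^2\,dx\,d\varphi$ gives the angular term the coefficient $\frac1{4\pi}$, not $\frac1{16\pi}$, after dividing by $W$. Equivalently, if doubling were legitimate, Onofri would give the inequality with $W^2/(64\pi)$ in place of $W^2/(16\pi)$. That inequality is false. For $g=\epsilon\cos\theta\,\sech(Wt/2)$:
\begin{itemize}
\item the left side is $W\epsilon^2/6+O(\epsilon^4)$;
\item a right side with $W^2/(64\pi)$ equals $7W\epsilon^2/96$, which is smaller;
\item the correct right side, with $W^2/(16\pi)$, equals exactly $W\epsilon^2/6$.
\end{itemize}
The last point reflects that first spherical harmonics are flat directions of Onofri. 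So your claim that $W^2/(16\pi)$ is what makes the two parts isotropic ``after the doubling'' is incorrect.

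The repair, which is what the paper does, is to keep $\theta$ itself as the longitude and reflect evenly, $\theta\mapsto-\theta$, onto a circle of length $2\pi$. Evenness makes the extension continuous across $\theta=0$ and $\theta=\pi$ with no normal-derivative condition. Reflection doubles both the half-sphere measure and the half-sphere Dirichlet energy. Hence $\frac1{4\pi}\sech^2s\,ds\,d\theta$ becomes $e^{\phi_W}d^2z$, and $\frac1{16\pi}\int|\nabla h|^2\,dA$ becomes $\frac1{4\pi W}\int|g_t|^2+\frac W{16\pi}\int|g_\theta|^2$. Multiplying by $W$ gives exactly \eqref{var:onofri-formula}. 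Your displayed isotropic expression is correct for this identification, not for the doubling.

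The rest of your plan is sound:
\begin{itemize}
\item Removability of the poles by zero capacity. The paper truncates at height $\pm M$ and uses a logarithmic cutoff of energy $2\pi/\log(1/\varepsilon)$.
\item Finiteness of the exponential integral. You use Moser--Trudinger; the paper uses smooth approximation and Fatou in Onofri.
\item The equality case via axial dilations.
\end{itemize}
The split $g=g_0+g^\perp$ is harmless but unnecessary.
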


\begin{proof}
The sharp spherical Onofri inequality \cite{Onofri,Mancini}, with
$\dd\omega=\dd A/(4\pi)$ on the unit sphere, is
\begin{equation}
\label{var:sphere}
 \log\int_{\mathbb S^2}e^h\dd\omega
 \le\int_{\mathbb S^2}h\dd\omega
       +\frac1{16\pi}\int_{\mathbb S^2}|\nabla h|^2\dd A.
\end{equation}
In cylinder coordinates its metric is
$\sech^2 s\,(\dd s^2+\dd\theta^2)$.  Apply this inequality to the
even reflection, in the angular variable, of $g(2s/W,\theta)$ from
$0<\theta<\pi$ to the circle of length $2\pi$.
Reflection requires no boundary condition on the normal derivative.
Conformal invariance of Dirichlet energy and the weighted $L^2$
assumption give finite spherical $L^2$ norm and finite Dirichlet energy
away from the poles. To justify extension across a pole, first truncate
the function between $-M$ and $M$. In a conformal coordinate about the
pole, use a radial cutoff that is zero for $r\le\varepsilon^2$, is
$\log(r/\varepsilon^2)/\log(1/\varepsilon)$ for
$\varepsilon^2<r<\varepsilon$, and is one for $r\ge\varepsilon$.
Its Dirichlet energy is $2\pi/\log(1/\varepsilon)$.
Multiplying the truncated function by this cutoff changes its $H^1$
norm by a quantity tending to zero: the extra gradient term is bounded
by $M^2$ times this energy, and the original gradient and $L^2$ terms
on the shrinking disk tend to zero by integrability. Do this at both
poles, and then let $M\to\infty$. The truncations converge in $H^1$
by dominated convergence applied to the function and its gradient.
This proves that the reflected function extends to $H^1(\mathbb S^2)$.

There is no integrability assumption on the exponential in this
argument.  Approximation on the sphere by smooth $H^1$ functions,
followed by an almost-everywhere convergent subsequence and Fatou's
lemma in \eqref{var:sphere}, proves both its integrability and the
inequality at this regularity.  The equality classification of the
sharp inequality also holds for $H^1$ functions.

Under $s=Wt/2$, the reflected spherical probability measure becomes
$\frac{W}{4\pi}\sech^2(Wt/2)\dd t\dd\theta=e^{\phi_W}\dd^2 z$.
The reflected energy is twice the energy on the half cylinder, and
\[
 \int|\partial_s g(2s/W,\theta)|^2\dd s\dd\theta
    =\frac2W\int_{\Sstrip}|g_t|^2\dd^2 z,\qquad
 \int|g_\theta(2s/W,\theta)|^2\dd s\dd\theta
    =\frac W2\int_{\Sstrip}|g_\theta|^2\dd^2 z.
\]
Multiplying \eqref{var:sphere} by $W$ gives
\eqref{var:onofri-formula}.  The spherical equality functions are
$c-2\log(a+b\cdot\omega)$ with $a>|b|$.  Independence of longitude
forces $b$ to lie on the polar axis; its polar coordinate is $\tanh s$.
Absorbing $a$ into $c$ gives \eqref{var:onofri-equality}.
\end{proof}

\subsubsection{Exponential integrability on unit cells}

We deduce a bound for lateral exponentials on every unit cell with a
constant depending only on the total Dirichlet energy. This controls
finite-mode approximation on boxes and, after multiplication by the
radial envelope, the area in the horizontal tails.

\begin{lemma}
\label{var:cell-exponential}
For $v\in\mathcal H_\perp$, $q\ge1$, and $j\in\mathbb Z$,
\begin{equation}
\label{var:cell-exponential-formula}
 \int_{[j,j+1]\times(0,\pi)}e^{qv}\dd^2 z
 \le2\pi\cosh^2(1)
       \exp\left\{\frac{q^2}{8\pi}
                         \int_{\Sstrip}|\nabla v|^2\dd^2 z\right\}.
\end{equation}
\end{lemma}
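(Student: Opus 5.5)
The plan is to derive the bound directly from the strip Onofri inequality, Lemma~\ref{var:onofri}, applied with the borderline parameter $W=2$ to the function $g=qv$. The constant $2\pi\cosh^2(1)$ is exactly $\sup_{0\le t\le1}e^{-\phi_2(t)}$, so the argument will be: move the cell to $[0,1]\times(0,\pi)$, insert the weight $e^{\phi_2}$, and then use Onofri.

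First, I would reduce to $j=0$. For $v\in\mathcal H_\perp$, the translate $v_j(t,\theta)=v(t+j,\theta)$ again lies in $\mathcal H_\perp$. It has zero angular mean for almost every $t$ and the same Dirichlet energy. Moreover, the integral of $e^{qv}$ over $[j,j+1]\times(0,\pi)$ equals the integral of $e^{qv_j}$ over $[0,1]\times(0,\pi)$. So it suffices to treat the cell $[0,1]\times(0,\pi)$.

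Next, I would compare with the weight. Since $e^{\phi_2(t)}=(2\pi)^{-1}\sech^2 t$, we have $e^{-\phi_2}\le2\pi\cosh^2(1)$ on $[0,1]$, and therefore
\[
 \int_{[0,1]\times(0,\pi)}e^{qv}\dd^2z
 \le2\pi\cosh^2(1)\int_{\Sstrip}e^{qv+\phi_2}\dd^2z.
\]

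Then I would apply Lemma~\ref{var:onofri} with $W=2$ and $g=qv$. I need to check its hypotheses. The function $g$ is $H^1$ on bounded strips and has finite Dirichlet energy. It also satisfies $\int g^2e^{\phi_2}\le(2\pi)^{-1}q^2\|v\|_{L^2}^2<\infty$, because $v\in H^1(\Sstrip)$. With $W=2$ the two gradient coefficients coincide, $1/(4\pi)=W^2/(16\pi)$, so the inequality reads
\[
 2\log\int e^{g+\phi_2}\dd^2z\le2\int g\,e^{\phi_2}\dd^2z+\frac1{4\pi}\int|\nabla g|^2\dd^2z.
\]
The linear term vanishes. Indeed, $e^{\phi_2}$ does not depend on $\theta$, and $v$ has zero angular mean for almost every $t$. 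Fubini applies because $v\in L^2$ and $e^{\phi_2}$ is integrable, so the term is $\int_\R e^{\phi_2(t)}\int_0^\pi qv\,d\theta\,dt=0$. Dividing by $2$ and exponentiating gives
\[
 \int e^{qv+\phi_2}\dd^2z\le\exp\Bigl\{\frac{q^2}{8\pi}\int|\nabla v|^2\dd^2z\Bigr\}.
\]
Combining this with the previous display proves \eqref{var:cell-exponential-formula}.

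I do not expect a serious obstacle. The only points needing care are, first, that Lemma~\ref{var:onofri} is stated for all $W>0$, so the value $W=2$, outside the paper's range $(0,2)$, is permitted. Second, the Fubini step that kills the linear term needs the integrability of $v\,e^{\phi_2}$ noted above. The hypothesis $q\ge1$ is not actually used; the bound holds for every real $q$.
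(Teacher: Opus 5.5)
Your proposal is correct and is essentially the paper's proof: apply Lemma~\ref{var:onofri} with parameter $2$ to $qv$, note that the source term vanishes by zero angular mean, and use that the density $e^{\phi_2(t-j)}=\sech^2(t-j)/(2\pi)$ is at least $(2\pi\cosh^2(1))^{-1}$ on the cell. Translating $v$ instead of the Onofri weight is only a cosmetic difference, and your hypothesis checks fill in details the paper leaves implicit; for the Fubini step, $\int|v|e^{\phi_2}<\infty$ follows from Cauchy--Schwarz because $e^{\phi_2}$ is bounded and integrable.
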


\begin{proof}
Apply the translated version of Lemma~\ref{var:onofri} with parameter
$2$ to $qv$.  Its source vanishes by angular mean zero, yielding
\[
 \int_{\Sstrip}e^{qv(t,\theta)}
           \frac{\sech^2(t-j)}{2\pi}\dd^2 z
 \le\exp\left\{\frac{q^2}{8\pi}\int_{\Sstrip}|\nabla v|^2\dd^2 z\right\}.
\]
The density is at least $(2\pi\cosh^2(1))^{-1}$ on the stated cell.
\end{proof}

\subsection{GMC moment estimates}
\label{mom:section}

The goal is to control cutoff errors at arbitrarily large exponential
cost, and later to justify the unbounded area reward. We prove bounds
on $L^{q_\gamma}(\mathbb P)$ norms with $q_\gamma=1+(W+2)/(2\gamma^2)$:
these are moments of diverging order, even though the norm takes their
$1/q_\gamma$ power. Covariance and collision estimates give a uniform
bound on each cell; the radial envelope then controls the whole strip
and its tails. Finally, a fixed-order estimate and Gaussian
hypercontractivity make the Fourier cutoff error tend to zero in this
same norm. Markov's inequality converts that convergence into the
exponentially good approximation required by the LDP.

The choice of moment order serves two precise estimates. For an
approximation error $D_\gamma$, Markov's inequality reads
\[
 \gamma^2\log\P(|D_\gamma|>\eta)
 \le\gamma^2 q_\gamma
       \log\frac{\|D_\gamma\|_{L^{q_\gamma}(\mathbb P)}}{\eta}.
\]
Since $\gamma^2q_\gamma\to(W+2)/2>0$, making this probability norm
tend to zero makes the logarithmic upper bound tend to $-\infty$.
For the later area-weighted expectation, put
$p_\gamma=W/\gamma^2-1/2$. A small-probability event $E$ can have a
large contribution to $\E[Z_\gamma^{p_\gamma}\mathbf1_E]$ because
the mass itself is random. H\"older's inequality gives the quantitative
bound
\[
 \E[Z_\gamma^{p_\gamma}\mathbf1_E]
 \le\|Z_\gamma\|_{L^{q_\gamma}(\mathbb P)}^{p_\gamma}
        \P(E)^{1-p_\gamma/q_\gamma}.
\]
Here $q_\gamma>p_\gamma$ and
$1-p_\gamma/q_\gamma\to(2-W)/(W+2)>0$, so a sufficiently negative
probability rate still suppresses this weighted contribution.
Subsection~\ref{ldp:weighted-section} will apply this reasoning to
the full weight, including its Gaussian source. On the other hand,
the collision estimate below permits moment orders with
$\gamma^2q<2$, with a strict margin for the boundary weight.
Choosing $\gamma^2q_\gamma\to(W+2)/2$, between $W$ and $2$,
meets both requirements. We round $q_\gamma$ upward only when using
the integer-moment identity.

For clarity, every probability norm below is written with its measure:
for a real random variable $Y$ and $1\le p<\infty$,
\[
 \|Y\|_{L^p(\mathbb P)}:=\bigl(\E|Y|^p\bigr)^{1/p}.
\]
Thus the norm of a random mass means its moment followed by the
indicated root, not a spatial integral of that mass.

All Gaussian fields in this section use the common mode coupling
\[
 X_N^\perp(t,\theta)=\sum_{n=1}^N\sqrt{2/n}\,U_n(t)\cos(n\theta),
 \qquad \E[U_n(t)U_n(s)]=e^{-n|t-s|},
\]
where the continuous stationary processes $U_n$ are independent.
Their covariance is denoted by $G_{\perp,N}$, and $G_\perp$ is its off-diagonal
limit. For $D=I\times(0,\pi)$ with $I$ a bounded interval and a
deterministic Borel density $0\leq a\leq1$, write
\begin{equation}
 Z_{\gamma,N;D}(\log a)
 :=\int_D a(z)(2\sin\theta)^{-\gamma^2/2}
             :e^{\gamma X_N^\perp(z)}:\,\dd^2 z.
 \label{mom:local-mass}
\end{equation}
Here $\log0=-\infty$, interpreted through the displayed integral.
The notation without $N$ denotes its martingale limit, constructed
below.  Constants implicit in $\lesssim$ are independent of
$\gamma,N$ and $a$; their permitted dependencies are specified in
each statement.

\begin{lemma}[Uniform covariance bound]
\label{mom:covariance-bound}
For distinct points $z,w\in\Sstrip$,
\begin{equation}
 |G_{\perp,N}(z,w)|\leq2\log^+\frac1{|z-w|}+O(1),
 \qquad
 \E[X_M^\perp(z)X_N^\perp(w)]=G_{\perp,\min(M,N)}(z,w),
 \label{mom:covariance-formula}
\end{equation}
where the bounded term is absolute and uniform in the cutoffs and
horizontal translations.  The same bound holds for $G_\perp$.
\end{lemma}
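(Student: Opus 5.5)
The plan is to work directly from the Fourier representation \eqref{setup:covariance}, truncated at $N$. The second identity in \eqref{mom:covariance-formula} follows from the common mode coupling. Since the $U_n$ are independent, $\E[U_m(t)U_n(s)]=\delta_{mn}e^{-n|t-s|}$. In $\E[X_M^\perp(z)X_N^\perp(w)]$ only the terms with $n\le\min(M,N)$ survive, which gives exactly $G_{\perp,\min(M,N)}(z,w)$. Translation invariance in $t$ is evident because the kernel depends only on $t-s$. Everything therefore reduces to a uniform logarithmic bound on the partial sums.

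Write $\tau=|t-s|$, $\alpha=\theta-\theta'$ and $\beta=\theta+\theta'$. The product-to-sum formula gives
\[
 G_{\perp,N}(z,w)=S_N(\tau,\alpha)+S_N(\tau,\beta),\qquad
 S_N(\tau,x)=\sum_{n=1}^N\frac{e^{-n\tau}}n\cos(nx).
\]
I will bound each $S_N$ separately by
\[
 |S_N(\tau,x)|\le\log^+\frac1{|\tau+ix|_{\mathrm{circ}}}+C,
\]
where $|\tau+ix|_{\mathrm{circ}}$ denotes $\sqrt{\tau^2+\operatorname{dist}(x,2\pi\mathbb Z)^2}$. The reflected argument satisfies $\operatorname{dist}(\beta,2\pi\mathbb Z)\ge\min(\theta+\theta',2\pi-\theta-\theta')\ge|\theta-\theta'|$, because $\theta,\theta'\in(0,\pi)$. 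Hence both terms are bounded by $\log^+(1/|z-w|)+C$, which yields the factor $2$.

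For the uniform bound on $S_N$, I will split the sum at $n_0=\lfloor 1/\rho\rfloor$, where $\rho=|\tau+ix|_{\mathrm{circ}}$.
\begin{itemize}
\item For $n\le\min(N,n_0)$, the trivial bound $\sum_{n\le n_0}1/n\le\log(1/\rho)+1$ suffices.
\item For $n>n_0$, I will use Abel summation. The weights $e^{-n\tau}/n$ are decreasing, so summing by parts gives
\[
 \Bigl|\sum_{n=a}^{b}\frac{e^{-n\tau}}n\cos(nx)\Bigr|
 \le\frac{2}{a}\sup_{k}\Bigl|\sum_{n=a}^{k}e^{in x}\Bigr|
 \le\frac{2}{a|\sin(x/2)|}.
\]
With $a\approx 1/\rho$, this tail is $O(1)$ whenever $\operatorname{dist}(x,2\pi\mathbb Z)\gtrsim\rho$.
\item In the remaining regime $\tau\gtrsim\rho$, the tail is bounded directly by $\sum_{n>n_0}e^{-n\tau}/n\le\sum_{n>n_0}e^{-n\rho/2}/n=O(1)$.
\end{itemize}
All three bounds hold uniformly in $N$. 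If $\rho\ge1$, then $n_0=0$ and the same tail estimates give $O(1)$, which accounts for the $\log^+$. This estimate is the main point of the lemma. The delicate part is that the constant must not depend on $N$ when $\tau=0$, since then there is no exponential damping. That case is exactly the one handled by the Abel summation, with $|\sin(x/2)|\asymp\operatorname{dist}(x,2\pi\mathbb Z)$.

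For the full kernel $G_\perp$, I will let $N\to\infty$. For $z\ne w$ the series converges: absolutely if $\tau>0$, and by Dirichlet's test if $\tau=0$ and $\alpha,\beta\notin2\pi\mathbb Z$, which holds for distinct interior points. The uniform bound therefore passes to the limit. Alternatively, the explicit formula \eqref{setup:kernel-sum} gives the same bound through $|1-e^{-\tau+ix}|\asymp\min(1,\rho)$ for small $\rho$ and $\asymp1$ otherwise.
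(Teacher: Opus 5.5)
Your proposal is correct and follows essentially the paper's route: independence of the modes for the mixed covariance, the product-to-sum reduction to two logarithmic series, a harmonic-sum bound below frequency of order $1/|z-w|$ and summation by parts above it, and the inequality $\operatorname{dist}(\theta+\theta',2\pi\mathbb Z)\ge|\theta-\theta'|$ for the reflected term. The only difference is bookkeeping. The paper writes $q=e^{-\tau+ix}$ and splits according to whether $N|1-q|\le1$; in the second case it uses the closed form $\Re\sum_{n\ge1}q^n/n=-\log|1-q|$, and a single bound $|1-q|^2\gtrsim\min\{1,\tau^2+\operatorname{dist}(x,2\pi\mathbb Z)^2\}$ handles damping and oscillation together. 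You instead cut the sum at $n_0\approx1/\rho$ and treat the oscillation-dominated and damping-dominated regimes separately, which avoids the closed form at the cost of one extra case.
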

\begin{proof}
The mixed covariance follows directly from independence of the
modes.  To bound a single cutoff, for $|q|\leq1$, $q\ne1$, first
suppose $N|1-q|\leq1$.  Then
\[
 \left|\Re\sum_{n=1}^N\frac{q^n}{n}\right|
       \leq1+\log N\leq1+\log(1/|1-q|).
\]
If $N|1-q|>1$, summation by parts and
$|\sum_{n=N+1}^{m}q^n|\leq2/|1-q|$ give
$|\sum_{n>N}q^n/n|\leq2/((N+1)|1-q|)\leq2$.
The full series has real part $-\log|1-q|$; when $|q|=1$, use
Dirichlet convergence.  Since $|1-q|\leq2$, we have in either case
\[
 \left|\Re\sum_{n=1}^N\frac{q^n}{n}\right|
                     \leq\log^+(1/|1-q|)+O(1).
\]
For $s\geq0$,
\[
 |1-e^{-s+ix}|^2=(1-e^{-s})^2+2e^{-s}(1-\cos x)
 \gtrsim\min\{1,s^2+\operatorname{dist}(x,2\pi\mathbb Z)^2\}.
\]
Use this with $s=|t-t'|$ and $x=\theta-\theta'$ or
$x=\theta+\theta'$.  The corresponding angular distances satisfy
\[
 \operatorname{dist}(\theta-\theta',2\pi\mathbb Z)=|\theta-\theta'|,
 \qquad
 \operatorname{dist}(\theta+\theta',2\pi\mathbb Z)
       =\min(\theta+\theta',2\pi-\theta-\theta')\geq|\theta-\theta'|.
\]
The identity $2\cos(n\theta)\cos(n\theta')
=\cos(n(\theta-\theta'))+\cos(n(\theta+\theta'))$ proves the bound.
The same argument gives the off-diagonal limit
\begin{equation}
 G_\perp(z,w)=-\log|1-e^{-|t-t'|+i(\theta-\theta')}|
         -\log|1-e^{-|t-t'|+i(\theta+\theta')}|.
 \label{mom:full-covariance}
\end{equation}
Convergence is absolute if $t\ne t'$ and follows from Dirichlet's
test otherwise.
\end{proof}

Expanding a Gaussian moment produces one singular factor for each pair
of integration points.  The following deterministic bound integrates all
collisions at once and displays its dependence on the number of replicas.
For a measurable set $D$, $|D|$ denotes planar area, $D^q$ its $q$-fold
product, and $d\mathbf z=d^2z_1\cdots d^2z_q$ the product measure.

\begin{lemma}[Collision integral]
\label{mom:collision-integral}
For a bounded measurable $D\subset\R^2$, an integer $q\geq2$ and
$0<s<4/q$,
\begin{equation}
 \int_{D^q}\prod_{i<j}(1\wedge|z_i-z_j|)^{-s}\,\dd\mathbf z
 \leq |D|\left(|D|+\frac{2\pi}{2-sq/2}\right)^{q-1}.
 \label{mom:collision-bound}
\end{equation}
\end{lemma}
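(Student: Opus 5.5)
The plan is an inductive (tree) bound that integrates out one point at a time. The integrand $\prod_{i<j}(1\wedge|z_i-z_j|)^{-s}$ involves all $\binom q2$ pairs, so it cannot be integrated directly. At fixed $\mathbf z$, I will assign each point $z_k$ ($k\ge2$) to a nearest earlier point and bound the full product by a product of $q-1$ factors along this nearest-neighbour tree. The catch is that each factor gets a larger exponent.

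To set this up, fix an ordering of the points and, for $k\ge2$, put $\rho_k=\min_{i<k}(1\wedge|z_i-z_k|)$. For $i<k$ we have $(1\wedge|z_i-z_k|)^{-s}\le\rho_k^{-s}$. Grouping the pair $\{i,k\}$ with its later index $k$ gives
\[
 \prod_{i<j}(1\wedge|z_i-z_j|)^{-s}\le\prod_{k=2}^q\rho_k^{-(k-1)s}.
\]
This gives exponent up to $(q-1)s$, which is too large: we only have $s<4/q$, not $(q-1)s<2$. So I will symmetrize. The standard fix is to order the points greedily: since the integrand is symmetric, it suffices to handle each ordering region and average. I would instead bound each pair factor using the geometric mean over the two endpoints' nearest-neighbour distances. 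Let $r_k=\min_{i\ne k}(1\wedge|z_i-z_k|)$. For any pair, $(1\wedge|z_i-z_j|)\ge\max(r_i,r_j)\ge (r_ir_j)^{1/2}$. Hence
\[
 \prod_{i<j}(1\wedge|z_i-z_j|)^{-s}\le\prod_k r_k^{-s(q-1)/2}.
\]
Each point now carries exponent $s(q-1)/2<sq/2<2$. This is exactly the exponent $sq/2$ appearing in the denominator $2-sq/2$, so this is the right normalization. I will bound $r_k^{-sq/2}$, using $r_k\le1$ and $(q-1)\le q$.

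Next comes the sequential integration. Having $\prod_k r_k^{-\alpha}$ with $\alpha=sq/2<2$, I bound $r_k^{-\alpha}\le\sum_{i\ne k}(1\wedge|z_i-z_k|)^{-\alpha}$, or more economically $\le 1+\sum$. That would give a combinatorial factor of order $q$ per point, which the target bound does not have. So I instead use the ordering trick: $r_k\ge$ the distance to the nearest \emph{earlier} point only for one orientation. Concretely, I would partition $D^q$ according to the permutation that orders points by a greedy nearest-neighbour chain. Alternatively, I would integrate out $z_q,z_{q-1},\dots$ in turn, using at each step the single-variable bound
\[
 \sup_{w}\int_D(1\wedge|z-w|)^{-\alpha}\,d^2z\le|D|+\int_{|z-w|<1}|z-w|^{-\alpha}d^2z=|D|+\frac{2\pi}{2-\alpha}.
\]
This one-variable estimate is what produces each factor $(|D|+2\pi/(2-sq/2))$. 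The last remaining point contributes $|D|$, which matches \eqref{mom:collision-bound} exactly.

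The main obstacle is arranging the reduction from the full pair product to a tree with exactly one factor per non-root vertex, each with exponent at most $sq/2$, and doing so without incurring a $q!$ or $q^{q}$ combinatorial loss. I would first try the following. On the region where $z_q$ has nearest neighbour $z_{m}$, bound every factor involving $z_q$ by $(1\wedge|z_q-z_m|)^{-s}$. For the remaining factors, I will redistribute weight so that total exponents stay at most $sq/2$ per integrated variable. Then I sum over $m$ only once, controlling it by the supremum over $w$ above rather than by a union over choices, and induct on $q$ with $D^{q-1}$. If the exponent bookkeeping gives $(q-1)s$ rather than $sq/2$, I would fall back on the geometric-mean inequality to halve it. This is where the hypothesis $s<4/q$ is used.
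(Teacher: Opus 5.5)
\textbf{There is a genuine gap.} Your proposal never reaches a proof. The reductions you commit to give majorants that are not integrable in part of the admissible range, and the sequential integration step does not apply to them.

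The geometric-mean bound $\prod_{i<j}(1\wedge|z_i-z_j|)^{-s}\le\prod_k r_k^{-s(q-1)/2}$ is true. However, the per-point exponent does not govern integrability, because the factors $r_k^{-\beta}$ share variables. Take $q\ge3$, with $D$ a disk say, and configurations in which $|z_1-z_2|=\epsilon$ is the smallest mutual distance while the other points stay a fixed distance apart. Then $r_1=r_2=\epsilon$ and every $r_k\le1$, so your majorant is at least $\epsilon^{-s(q-1)}$. This is not integrable in $z_2$ near $z_1$ once $s\ge2/(q-1)$. Since $2/(q-1)<4/q$ for $q\ge3$, this range is allowed; for example $q=3$ and $1\le s<4/3$. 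The true integrand there is only of order $\epsilon^{-s}$.

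Your Voronoi-region fallback has the same defect. It puts exponent $(q-1)s$ on the single edge from $z_q$ to $z_m$, and $(q-1)s\ge2$ is again possible.

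Finally, the one-variable bound $\sup_w\int_D(1\wedge|z-w|)^{-\alpha}\dd^2z\le|D|+2\pi/(2-\alpha)$ can be applied variable by variable only if one remaining factor depends on the variable being integrated. In $\prod_k r_k^{-\alpha}$, the point $z_q$ enters every $r_k$. Forcing a tree by summing over nearest-neighbour or spanning-tree assignments costs a factor such as $q^{q-2}$, whose $q$th root is of order $q$. That loss is fatal here, because Lemma~\ref{mom:high-moment} takes $q\asymp\gamma^{-2}$.

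\textbf{The missing idea.} Keep the true pair distances, make your halving an exact identity, and decouple with H\"older's inequality rather than pointwise. Each pair $\{i,j\}$ occurs in exactly two stars, so
\[
 \prod_{i<j}(1\wedge|z_i-z_j|)^{-s}
 =\prod_{i=1}^q\Bigl(\prod_{j\ne i}(1\wedge|z_i-z_j|)^{-sq/2}\Bigr)^{1/q}.
\]
H\"older with $q$ exponents all equal to $q$ bounds the integral by the geometric mean of the $q$ star integrals $\int_{D^q}\prod_{j\ne i}(1\wedge|z_i-z_j|)^{-sq/2}\dd\mathbf z$.

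In the $i$th star integral, fix $z_i$. Each other variable then appears in exactly one factor, so your one-variable estimate with $\alpha=sq/2<2$ applies $q-1$ times. The final integration in $z_i$ supplies $|D|$. Each star integral is therefore at most $|D|(|D|+2\pi/(2-sq/2))^{q-1}$, and so is their geometric mean. This is exactly \eqref{mom:collision-bound}, with no combinatorial factor, and it is the only place $s<4/q$ enters. Mutual nearest neighbours cause no doubling because each star is integrated separately.
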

\begin{proof}
Every unordered pair occurs in two of the products below, so
\[
 \prod_{i<j}(1\wedge|z_i-z_j|)^{-s}
 =\prod_{i=1}^q
   \left(\prod_{j\ne i}(1\wedge|z_i-z_j|)^{-sq/2}\right)^{1/q}.
\]
Apply H\"older on this product space with $q$ equal exponents.
The integral of the left side is at most
\[
 \prod_{i=1}^q\left[
 \int_{D^q}\prod_{j\ne i}(1\wedge|z_i-z_j|)^{-sq/2}
                   d\mathbf z\right]^{1/q}.
\]
In its $i$th factor, hold $z_i$ fixed and integrate the other
$q-1$ variables first. There are no pair interactions between
those variables in this factor, so their integrals multiply.  Each inner integral is
at most
$|D|+2\pi\int_0^1\rho^{1-sq/2}\,\dd\rho
=|D|+2\pi/(2-sq/2)$.
Integration in $z_i$ supplies the remaining factor $|D|$.
This treats all simultaneous collisions and all collections of
clusters with an explicit dependence on $q$.
\end{proof}

We apply this collision estimate to the Gaussian replica identity.  The
choice of a moment strictly above $W/\gamma^2-1/2$ leaves room for
H\"older when the expectation is restricted to unlikely Gaussian paths.

\begin{lemma}[Large moments]
\label{mom:high-moment}
Fix $0<W<2$ and a bounded interval $I$.  For all sufficiently small
$\gamma>0$,
\begin{equation}
 \sup_{N\geq1}\sup_{0\leq a\leq1}
 \big\|Z_{\gamma,N;D}(\log a)\big\|_{L^{\,1+(W+2)/(2\gamma^2)}(\mathbb P)}
                     \lesssim_{W,|I|}1.
 \label{mom:high-moment-bound}
\end{equation}
The cutoff masses converge in this norm to
$Z_{\gamma;D}(\log a)$, with the same bound and with mean
$\int_Da(z)(2\sin\theta)^{-\gamma^2/2}\,\dd^2 z$.
\end{lemma}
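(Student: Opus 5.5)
We bound integer moments of order $q=\lceil 1+(W+2)/(2\gamma^2)\rceil$; monotonicity of $L^p(\mathbb P)$ norms then gives the stated real order. Since $a\ge0$ and the Wick exponentials are positive, expanding the $q$th power and applying Fubini gives the replica identity
\[
 \E\bigl[Z_{\gamma,N;D}(\log a)^q\bigr]
 =\int_{D^q}\prod_{i}a(z_i)(2\sin\theta_i)^{-\gamma^2/2}
   \prod_{i<j}e^{\gamma^2G_{\perp,N}(z_i,z_j)}\,\dd\mathbf z .
\]
Here we use $\E\prod_i:e^{\gamma X_N(z_i)}:=\exp(\gamma^2\sum_{i<j}G_{\perp,N}(z_i,z_j))$. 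We drop $a\le1$. Lemma~\ref{mom:covariance-bound} bounds the interaction by $e^{C\gamma^2}(1\wedge|z_i-z_j|)^{-2\gamma^2}$ uniformly in $N$, at the cost of the factor $e^{C\gamma^2q(q-1)/2}$. This factor is $e^{O(\gamma^{-2})}$, so its $1/q$ power is $e^{O(1)}$, which is acceptable.

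\textbf{Main obstacle: the boundary weight.} The factor $(2\sin\theta)^{-\gamma^2q/2}$ has exponent of order $(W+2)/4<1$, so it is integrable per replica, but it must be combined with the collisions. We use Hölder in the product space with exponents $\lambda,\lambda'$. The boundary part gives $\bigl(\int_D(2\sin\theta)^{-\lambda\gamma^2/2}\bigr)^{q/\lambda}$. This is finite provided $\lambda\gamma^2/2<1$, and its $1/q$ power is bounded. The collision part is handled by Lemma~\ref{mom:collision-integral} with $s=2\lambda'\gamma^2$, which requires $sq/2=\lambda'\gamma^2q<2$. Since $\gamma^2q\to(W+2)/2<2$, we can choose $\lambda'>1$ with $\lambda'(W+2)/2<2$, and then $\lambda=\lambda'/(\lambda'-1)$. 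It remains to check $\lambda\gamma^2/2<1$, which holds trivially as $\gamma\to0$ with $\lambda$ fixed.

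The collision bound is $|D|(|D|+2\pi/(2-sq/2))^{q-1}$, and its $1/(\lambda' q)$ power is bounded by a constant depending on $W$ and $|I|$. (An alternative that avoids Hölder is to absorb the angular weight using the strict margin in the exponent of Lemma~\ref{mom:collision-integral} directly.) Together these steps give \eqref{mom:high-moment-bound}, uniformly in $N$ and $a$.

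\textbf{Convergence.} The masses $Z_{\gamma,N;D}$ form a positive martingale in $N$ for the filtration generated by the modes $U_1,\dots,U_N$, because $:e^{\gamma X_N}:$ is a martingale pointwise. By the uniform $L^q$ bound with $q>1$, Doob's $L^q$ martingale convergence gives convergence almost surely and in $L^q(\mathbb P)$ to a limit $Z_{\gamma;D}(\log a)$. The limit satisfies the same bound by Fatou, and its mean is the common mean $\int_D a(2\sin\theta)^{-\gamma^2/2}\,\dd^2z$ by $L^1$ convergence.
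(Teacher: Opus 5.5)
Your proposal is correct and follows essentially the same route as the paper. Both use the integer order $q=\lceil 1+(W+2)/(2\gamma^2)\rceil$, the replica formula, the uniform covariance bound, a H\"older split of the collision and boundary factors (your $\lambda'$ is the paper's $p$, with the same conditions $\lambda'\gamma^2q<2$ and $\lambda\gamma^2/2<1$), Lemma~\ref{mom:collision-integral}, and Lyapunov; the only difference is that you obtain convergence from Doob's $L^q$ martingale convergence theorem directly, a valid shortcut for the paper's uniform-integrability, Fatou and cylinder-approximation argument.
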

\begin{proof}
For an integer $q\geq2$, positivity and Tonelli allow the $q$
spatial integrals and expectation to be interchanged.  The variance
of $\sum_iX_N^\perp(z_i)$ is
$\sum_iG_{\perp,N}(z_i,z_i)+2\sum_{i<j}G_{\perp,N}(z_i,z_j)$.
Consequently the subtracted Wick self-variances cancel exactly, and
\begin{equation}
 \E[Z_{\gamma,N;D}(\log a)^q]
 =\int_{D^q}\prod_i a(z_i)(2\sin\theta_i)^{-\gamma^2/2}
       \exp\!\left(\gamma^2\sum_{i<j}G_{\perp,N}(z_i,z_j)\right)
                                             \dd\mathbf z.
 \label{mom:replica-formula}
\end{equation}
The bounded part of the covariance contributes
$\exp(O(\gamma^2q(q-1)))$.  After taking a $q$-th root it is
$\exp(O(\gamma^2q))$, with an absolute implied constant.
To integrate the remaining factors, fix
$1<p<4/(W+2)$.  H\"older with exponents $p$ and $p/(p-1)$,
separates the factors
$\prod_{i<j}(1\wedge|z_i-z_j|)^{-2\gamma^2}$ and
$\prod_i(2\sin\theta_i)^{-\gamma^2/2}$.
For the first factor use Lemma~\ref{mom:collision-integral} with
$s=2p\gamma^2$; the second factor is a product of $q$ identical
one-point integrals. Taking the $q$th root yields
\begin{align}
 \big\|Z_{\gamma,N;D}(\log a)\big\|_{L^q(\mathbb P)}
 &\leq e^{O(\gamma^2q)}|D|^{1/(pq)}
 \left(|D|+\frac{2\pi}{2-p\gamma^2q}\right)^{(q-1)/(pq)}
 \notag\\[-2pt]
 &\quad\times
 \left[|I|\int_0^\pi(2\sin\theta)^{-p\gamma^2/(2(p-1))}
                                  \,\dd\theta\right]^{(p-1)/p},
 \label{mom:explicit-moment}
\end{align}
provided $p\gamma^2q<2$ and $p\gamma^2/(2(p-1))<1$.
The first condition is exactly the collision-integral condition
with exponent $2p\gamma^2$; the second integrates the boundary
weight.  This also proves that the quantities in
\eqref{mom:replica-formula} are finite.

Now take $q=\lceil1+(W+2)/(2\gamma^2)\rceil$.  Then
\[
 \frac{W+2}{2}<\gamma^2q\leq\frac{W+2}{2}+2\gamma^2,
 \qquad
 2-p\gamma^2q\longrightarrow2-p(W+2)/2>0.
\]
For small $\gamma$, the integrand in the one-point boundary
integral is bounded by
$\max\{1,(2\sin\theta)^{-1/2}\}$, an integrable function.
Every factor of \eqref{mom:explicit-moment} is thus bounded
independently of $\gamma,N,a$.  Lyapunov's inequality passes from
this integer $q$ to the asserted real exponent.

Let $\mathcal G_N=\sigma(U_1,\ldots,U_N)$, including their entire
paths.  Independence and mean-one Wick normalization give, for
$M\geq N$,
\[
 \E[Z_{\gamma,M;D}(\log a)\mid\mathcal G_N]
                         =Z_{\gamma,N;D}(\log a).
\]
The nonnegative martingale is bounded in $L^q(\mathbb P)$, with $q>1$ fixed
once $\gamma$ is fixed.  Uniform integrability gives an almost
sure and $L^1(\mathbb P)$ limit with the stated mean; Fatou gives its $L^q(\mathbb P)$
bound.  The martingale then equals the conditional expectation of
this limit, and approximation by bounded cylinder functions proves
convergence in $L^q(\mathbb P)$. More explicitly, if the limit is $Y$ and $Y_j$ is a
bounded $\mathcal G_j$-measurable approximation in $L^q(\mathbb P)$, then for
$N\ge j$ conditional expectation is a contraction and
\[
 \|\E[Y\mid\mathcal G_N]-Y\|_{L^q(\mathbb P)}
 \le 2\|Y-Y_j\|_{L^q(\mathbb P)}.
\]
Such approximations exist because
$\sigma(\bigcup_j\mathcal G_j)$ contains $Y$. Sending $j\to\infty$
proves convergence in $L^q(\mathbb P)$ and therefore in the asserted norm.  The domination in \eqref{mom:explicit-moment} also permits
$N\to\infty$ in \eqref{mom:replica-formula}, replacing $G_{\perp,N}$ by
$G_\perp$, for every fixed $\gamma,q$ satisfying its two strict
integrability conditions.
\end{proof}

The two-dimensional integral in Lemma~\ref{mom:collision-integral}
is essential: with a one-dimensional base, the same argument would
require $sq/2<1$.  Here the radial weight is a bounded density
relative to horizontal Lebesgue measure, so the horizontal
integration remains available.  The estimate is a norm bound; its
raw moment at order $W/\gamma^2-1/2$ may still have exponential size
$\exp(O(\gamma^{-2}))$.  This is why the lateral field remains in
the joint variational problem.

Lemma~\ref{mom:high-moment} gives convergence of the real-valued
random variables $Z_{\gamma,N;D}(\log a)$ for each fixed box $D$ and
test density $a$. We next show that these limits are the integrals of
a single random measure, so they can also be used to study area
observables. We then prove that every nonempty open set has positive
mass. Together with the whole-strip finiteness bound below, positivity
ensures that the total mass can be divided out and its logarithm taken
in the unit-area field.

\begin{lemma}[The measure defined by the cutoff limits]\label{mom:measure}
For each fixed sufficiently small $\gamma>0$, there is a positive
random Radon measure $\mu_\gamma$ on the strip such that, for every
fixed bounded box $D$ and deterministic Borel density $0\le a\le1$,
\[
 \int_D a(z)\,\mu_\gamma(dz)
 = Z_{\gamma;D}(\log a)
 =\lim_{N\to\infty}Z_{\gamma,N;D}(\log a)
 \quad\text{almost surely and in }L^1(\mathbb P).
\]
Here the limit on the right is the real-valued martingale limit from
Lemma~\ref{mom:high-moment}. The expectation measure is
$(2\sin\theta)^{-\gamma^2/2}d^2z$. In particular, for every fixed
Lebesgue-null set $A$, $\mu_\gamma(A)=0$ almost surely.
\end{lemma}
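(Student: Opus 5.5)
The plan is to construct $\mu_\gamma$ as the almost sure weak limit of the cutoff measures
\[
 \mu_{\gamma,N}(d^2z)=(2\sin\theta)^{-\gamma^2/2}:e^{\gamma X_N^\perp(z)}:\,d^2z .
\]
Each $\mu_{\gamma,N}$ is a positive Radon measure with continuous density. The identification for general Borel densities $a$ then follows by a monotone class argument.

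First, fix a countable family $\mathcal C$. It consists of the indicator densities of boxes $[s,s']\times[\alpha,\beta]$ with rational endpoints, together with the products of such indicators with functions from a countable dense subset of $C_c(\Sstrip)$ taking values in $[0,1]$. For each member of $\mathcal C$, Lemma~\ref{mom:high-moment} gives almost sure and $L^1(\mathbb P)$ convergence of $\int a\,d\mu_{\gamma,N}$. Discarding a single null set, these limits exist simultaneously for all of $\mathcal C$.

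On each full box $D_k=[-k,k]\times(0,\pi)$, the total masses $\mu_{\gamma,N}(D_k)$ converge almost surely, so they are almost surely bounded in $N$. The family $(\mu_{\gamma,N}|_{D_k})_N$ is therefore vaguely precompact on $[-k,k]\times[0,\pi]$. Any two subsequential limits agree on the dense countable family of continuous tests. Hence $\mu_{\gamma,N}$ converges vaguely on the open strip to a unique positive Radon measure $\mu_\gamma$, and the pieces for different $k$ are compatible.

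Next comes the identification $\int_D a\,d\mu_\gamma=Z_{\gamma;D}(\log a)$, which I expect to be the main obstacle. Vague convergence alone does not give it for indicator or Borel densities, for two reasons: boundary mass could appear at $\theta\in\{0,\pi\}$ or on box edges, and the martingale limits are only defined up to null sets depending on $a$.

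The first tool is the expectation measure. Take the $L^1(\mathbb P)$ limit in $\E\int a\,d\mu_{\gamma,N}=\int a(2\sin\theta)^{-\gamma^2/2}d^2z$ along the continuous tests. Monotone limits then give
\[
 \E\,\mu_\gamma(A)\le\int_A(2\sin\theta)^{-\gamma^2/2}d^2z
\]
for open $A$, hence for compact and then all Borel $A$ by outer regularity. In particular, box edges and other Lebesgue-null sets carry no $\mu_\gamma$-mass almost surely.

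To pass from continuous tests to a box indicator $\mathbf 1_D$, sandwich it between continuous $\varphi_-\le\mathbf 1_D\le\varphi_+$. The corresponding martingale limits satisfy
\[
 \E\bigl|Z_{\gamma;D}(0)-\textstyle\int\varphi_\pm\,d\mu_\gamma\bigr|\le\int(\varphi_+-\varphi_-)(2\sin\theta)^{-\gamma^2/2}d^2z,
\]
which can be made arbitrarily small. This uses monotonicity of the martingale limit in $a$, which holds by positivity. Taking limits in $L^1(\mathbb P)$ identifies $\mu_\gamma(D)=Z_{\gamma;D}(0)$ almost surely. Near $\theta=0,\pi$ the same argument applies, because the reference density is integrable there.

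For a general Borel density $0\le a\le1$, run a Dynkin/monotone class argument. Let $\mathcal L$ be the set of Borel $a$ for which $\int_D a\,d\mu_\gamma=Z_{\gamma;D}(\log a)$ almost surely. Both sides are linear in $a$. Both are continuous under bounded monotone limits: the left side by monotone convergence, and the right side because the map $a\mapsto Z_{\gamma;D}(\log a)$ is an $L^1(\mathbb P)$ contraction bounded by $\int|a-a'|(2\sin\theta)^{-\gamma^2/2}$. This contraction bound follows from the mean formula and positivity. Since $\mathcal L$ contains indicators of the $\pi$-system of rational sub-boxes, it contains all bounded Borel densities.

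The $L^1(\mathbb P)$ statement is part of Lemma~\ref{mom:high-moment}. The null-set claim follows from the expectation bound above.
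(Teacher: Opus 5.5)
Your proposal is correct and follows essentially the paper's route. Simultaneous almost sure limits on a countable family of continuous tests define the measure, the $L^1(\mathbb P)$ limits identify its expectation measure as $(2\sin\theta)^{-\gamma^2/2}d^2z$, and that identity kills edges and null sets and extends the identification to Borel densities (the paper by $L^1((2\sin\theta)^{-\gamma^2/2}d^2z)$ approximation, you by a monotone class argument). The one point to tighten is that for the paper's boxes $D=I\times(0,\pi)$ no upper test $\varphi_+\ge\ind_D$ lies in $C_c(\Sstrip)$, so use only inner tests $\varphi_-\uparrow\ind_{\mathrm{int}\,D}$ together with the exact mean $\E[Z_{\gamma;D}(0)-\int\varphi_-\,d\mu_\gamma]=\int(\ind_D-\varphi_-)(2\sin\theta)^{-\gamma^2/2}d^2z\to0$; the paper avoids this by applying Riesz on the closed box $[-T,T]\times[0,\pi]$ with tests continuous up to the boundary.
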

\begin{proof}
On each closed
box $[-T,T]\times[0,\pi]$, take simultaneous limits against a
countable uniformly dense rational vector space of continuous
functions containing the constants.  Signed tests are handled by
their positive and negative parts.  The cutoff total masses
converge, and
$|\int f\,\dd\mu_N|\leq\|f\|_\infty\mu_N(D)$.
The limit therefore extends to a bounded positive functional on
all continuous functions, hence to a finite positive measure by
the Riesz representation theorem.  Its expectation is
$(2\sin\theta)^{-\gamma^2/2}\dd^2 z$, first on continuous tests and
then as an identity of Borel measures.  All fixed box edges have
zero mass almost surely.  On a countable exhaustion the limits
therefore agree on overlaps and define a positive Radon measure
on the strip, with no mass on its two horizontal boundaries.
Continuous approximation in $L^1((2\sin\theta)^{-\gamma^2/2}\dd^2 z)$
identifies each bounded Borel integral with the limit of the
corresponding real-valued cutoff masses.  The common-coupling comparison in Subsection~\ref{reg:section}
identifies this measure with the geometric chaos used in the disk model.

\end{proof}

\begin{lemma}[Strict positivity]\label{mom:positive}
Almost surely the limiting measure assigns positive mass to every
nonempty open set in the strip.
\end{lemma}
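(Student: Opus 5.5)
It suffices to show that for every fixed box $D=I\times(J_1,J_2)$ with rational endpoints, $0<J_1<J_2<\pi$, we have $\mu_\gamma(D)>0$ almost surely. Every nonempty open set contains such a box, and there are countably many of them. I would prove this with a Kolmogorov zero--one argument on the mode filtration $\mathcal G_N=\sigma(U_1,\dots,U_N)$. The main steps are: positivity has positive probability; the event $\{\mu_\gamma(D)=0\}$ is a tail event; and therefore it has probability zero.

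\textbf{Positive probability.} By Lemma~\ref{mom:high-moment}, $\E\mu_\gamma(D)=\int_D(2\sin\theta)^{-\gamma^2/2}d^2z>0$. Since $\mu_\gamma(D)\ge0$, it follows that $\P(\mu_\gamma(D)>0)>0$.

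\textbf{Tail event.} Fix $N$. Split $X^\perp=X_N^\perp+Y_N$, where $Y_N$ consists of the modes $n>N$; it is independent of $\mathcal G_N$. The Wick exponential factorizes at each cutoff $M>N$:
\[
:e^{\gamma X_M^\perp}:=:e^{\gamma X_N^\perp}:\,:e^{\gamma (X_M^\perp-X_N^\perp)}:.
\]
The first factor is a continuous function on the closed box $\overline D$, hence bounded above and below by positive random constants $c_N,C_N$ depending only on $\mathcal G_N$. Let $\nu_N$ be the measure built from the high modes alone. It is the martingale limit of the $M$-cutoff masses of $Y_N$, and it exists by the same argument as Lemma~\ref{mom:high-moment}: the covariance bound of Lemma~\ref{mom:covariance-bound} applies to $G_\perp-G_{\perp,N}$ up to an $N$-dependent constant, and the collision estimate is unchanged. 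Passing $M\to\infty$ in the comparison $c_N\nu_{N,M}(D)\le\mu_M(D)\le C_N\nu_{N,M}(D)$ gives
\[
c_N\nu_N(D)\le\mu_\gamma(D)\le C_N\nu_N(D).
\]
Hence $\{\mu_\gamma(D)=0\}=\{\nu_N(D)=0\}$ almost surely, and the right-hand event is measurable with respect to $\sigma(U_n:n>N)$. As this holds for every $N$, the event lies in the tail $\sigma$-field of the independent sequence $(U_n)$. Kolmogorov's law gives probability $0$ or $1$, and the first step excludes $1$.

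\textbf{Main obstacle.} The delicate point is justifying $\nu_N$ and the sandwich rigorously. The argument requires:
\begin{itemize}
\item the moment and martingale machinery for the shifted family $Y_N$;
\item identification of the limits almost surely along the same subsequence;
\item measurability of $\nu_N(D)$ with respect to the high modes.
\end{itemize}
For the first item, I would note that $G_\perp-G_{\perp,N}$ differs from $G_\perp$ by the bounded continuous kernel $G_{\perp,N}$ on $\overline D\times\overline D$. This keeps \eqref{mom:replica-formula} finite with constants depending on $N$. Uniformity in $N$ is not needed here, since $\gamma$ and $N$ are fixed. I would avoid the box boundary $\theta\in\{0,\pi\}$ entirely by choosing $D$ interior, so that $X_N^\perp$ is uniformly continuous there.
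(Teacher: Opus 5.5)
Your proposal is correct and follows essentially the same route as the paper: you factor off the first $N$ modes as a continuous density bounded above and below on an interior rectangle, identify $\{\mu_\gamma(D)=0\}$ almost surely with a high-mode zero-mass event, and conclude by Kolmogorov's zero--one law together with the positive mean. The paper obtains the remaining-mode limit from a uniform two-replica second moment for $\gamma^2<1$ rather than the high-moment machinery, and, since your identification holds only up to null sets, it passes explicitly to the genuine tail event $\liminf_N\{\nu_N(D)=0\}$, which agrees almost surely with $\{\mu_\gamma(D)=0\}$.
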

\begin{proof}
Fix a compact interior rectangle of positive area. We prove that
its mass is almost surely nonzero. For each positive integer $m$,
delete the first $m$ modes.  The resulting
cutoff masses form a nonnegative martingale whose covariance is
$G_{\perp,N}-G_{\perp,m}$.  On a fixed interior rectangle $G_{\perp,m}$ and the boundary
weight are bounded.  Lemma~\ref{mom:covariance-bound} and the
two-replica formula give a uniform second moment when $\gamma^2<1$,
because $\int_{D^2}(1\wedge|z-w|)^{-2\gamma^2}\dd^2 z\dd^2 w<\infty$.
The remaining-mode mass thus has an almost sure and $L^2(\mathbb P)$ limit.
The full density is exactly this density multiplied by
$\exp(\gamma X_m^\perp(z)-\gamma^2G_{\perp,m}(z,z)/2)$, a continuous function
bounded above and below by positive random constants on the
rectangle.  After integration and passage to the limit, the full
mass is zero if and only if the remaining-mode mass is zero.
For every $m$ the latter event is measurable with respect to
$\sigma(U_n:n>m)$. Its limit inferior as $m\to\infty$ belongs to
the intersection of these sigma-fields: for each fixed $j$, all
terms with $m\ge j$ are measurable with respect to
$\sigma(U_n:n>j)$. The limit inferior agrees almost surely with
the full zero-mass event. Independence of the modes and Kolmogorov's
zero--one law show that its probability is zero or one. Probability
one would force the full mass to have expectation zero, contrary
to its preserved positive mean. Its probability is therefore zero.  A countable basis of
rectangles then gives positive mass to every nonempty open set.

\end{proof}

The radial envelope now turns the local moment estimate into a total-mass
bound.  Its polynomial dependence on $r$ will later allow conditioning
on an independent random radial density.

\begin{proposition}[Whole-strip radial moments]
\label{mom:whole-strip}
Let $f_\gamma:\R\to\R$ be deterministic continuous radial
exponents, interpreted on the strip as $f_\gamma(t,\theta)=f_\gamma(t)$.
Assume that there is a fixed $r\ge0$ such that, for all sufficiently
small $\gamma$,
\[
 e^{f_\gamma(t)}\le
 \min\{1,4e^{-W|t|/2+r(1+|t|)^{3/4}}\},\qquad t\in\R.
\]
This is the envelope \eqref{decomp:radial-envelope}; no convergence
assumption on $f_\gamma$ is needed for this proposition.
The total mass is $Z_\gamma(f_\gamma)=\int_{\Sstrip}e^{f_\gamma(t)}
\mu_\gamma(dz)$, where $\mu_\gamma$ is the measure from
Lemma~\ref{mom:measure}. For all sufficiently small $\gamma$,
\begin{equation}
 \big\|Z_\gamma(f_\gamma)\big\|_{L^{\,1+(W+2)/(2\gamma^2)}(\mathbb P)}
                      \lesssim_W(1+r)^4.
 \label{mom:whole-strip-bound}
\end{equation}
The same bound holds at every finite mode cutoff.  The norm of the mass
restricted to $|t|>T$ is $\lesssim_W e^{-WT/4}$ once $T$ is sufficiently
large in terms of $W,r$.  These bounds are uniform over all radial
exponents satisfying the displayed envelope.
\end{proposition}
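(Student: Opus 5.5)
We prove Proposition~\ref{mom:whole-strip} by decomposing the strip into unit cells $D_j=[j,j+1]\times(0,\pi)$, $j\in\mathbb Z$, and summing the cell estimates of Lemma~\ref{mom:high-moment} with the triangle (Minkowski) inequality in $L^{q}(\mathbb P)$, $q=1+(W+2)/(2\gamma^2)$. On $D_j$ put $a_j=\sup_{t\in[j,j+1]}e^{f_\gamma(t)}\le1$. Positivity of the chaos measure gives $\int_{D_j}e^{f_\gamma}\,d\mu_\gamma\le a_j\,\mu_\gamma(D_j)$. Lemma~\ref{mom:high-moment} with the density $a\equiv1$ on a unit-length interval gives $\|\mu_\gamma(D_j)\|_{L^q(\mathbb P)}\lesssim_W1$. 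This bound is uniform in $j$, because the covariance bound of Lemma~\ref{mom:covariance-bound} is uniform under horizontal translations and the boundary weight does not depend on $t$. The same bound holds for every cutoff $N$, so the finite-mode statement follows identically. Hence
\[
 \|Z_\gamma(f_\gamma)\|_{L^q(\mathbb P)}
 \le\sum_{j\in\mathbb Z}a_j\,\|\mu_\gamma(D_j)\|_{L^q(\mathbb P)}
 \lesssim_W\sum_{j\in\mathbb Z}\min\{1,4e^{-W|j|/2+r(2+|j|)^{3/4}}\}.
\]
The passage from cutoffs to the limit uses monotone convergence on finite unions of cells together with Fatou's lemma in $L^q(\mathbb P)$.

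It remains to bound the deterministic series by $(1+r)^4$, uniformly over exponents satisfying the envelope. Split at the threshold $T_r$ beyond which $r(2+|t|)^{3/4}\le W|t|/4$. Since $r(2+t)^{3/4}\le Wt/4$ holds once $t\gtrsim_W(1+r)^4$, we may take $T_r\lesssim_W(1+r)^4$. For $|j|\le T_r$ bound each term by $1$, contributing $O(T_r)\lesssim_W(1+r)^4$. For $|j|>T_r$ the terms are at most $4e^{-W|j|/4}$, a geometric tail bounded by a constant depending on $W$. This proves \eqref{mom:whole-strip-bound}.

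For the tail estimate, restrict the sum to the cells meeting $\{|t|>T\}$, i.e.\ $|j|\ge T-1$. Once $T-1\ge T_r$, the tail sum is $\lesssim\sum_{|j|\ge T-1}e^{-W|j|/4}\lesssim_W e^{-WT/4}$, with the constant absorbing $e^{W/4}$. This gives the stated bound once $T$ is large in terms of $W$ and $r$.

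The only delicate point is translation-uniformity of the cell moment: Lemma~\ref{mom:high-moment} is stated for a fixed bounded interval $I$. Stationarity of the modes $U_n$ in $t$ makes the law of $X_N^\perp$ translation invariant, so the norm for $D_j$ equals the norm for $D_0$. This removes the issue, and no step here is a real obstacle.
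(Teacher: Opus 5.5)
Your proposal is correct and follows the paper's proof essentially step by step. Both arguments take the cell suprema of the envelope times unit-cell masses that are uniformly bounded by Lemma~\ref{mom:high-moment} and horizontal stationarity, apply Minkowski to finite sums and then monotone convergence, and bound the total by an $O_W((1+r)^4)$ count of central cells plus a geometric tail. The only imprecision is cosmetic: for negative $j$ the supremum over $[j,j+1]$ is governed by $|t|\ge|j|-1$, which costs a factor $e^{W/2}$ that is absorbed into $\lesssim_W$.
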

\begin{proof}
The assumed envelope reads
\begin{equation}
 e^{f_\gamma(t)}
 \le\min\{1,4e^{-W|t|/2+r(1+|t|)^{3/4}}\}.
 \label{mom:radial-envelope}
\end{equation}
If $|t|\geq\max\{1,(2^{11/4}r/W)^4\}$, then
$r(1+|t|)^{3/4}\leq W|t|/4$.
There are $O_W((1+r)^4)$ unit intervals meeting the complementary
central region, and on them $e^{f_\gamma}\leq1$.
On the remaining intervals the cell suprema are bounded by the
summable geometric sequence from $4e^{-W|t|/4}$.
Their total is therefore $\lesssim_W(1+r)^4$.

Lemma~\ref{mom:high-moment} bounds the norm of the unweighted mass
on every unit cell uniformly, by horizontal stationarity.
Positivity and Minkowski's inequality, first for a finite sum of
cells and then by monotone convergence, prove
\eqref{mom:whole-strip-bound}.  Summing only the cells meeting
$|t|>T$ proves the tail bound; enlarging the threshold for $T$ by one
accounts for the two cells intersecting its endpoints.
The same lateral field is used on both half-lines, and no independence
between those restrictions is required.

Only the envelope, not local convergence of $f_\gamma$, was used in
this proof.  If the radial exponent and its finite envelope parameter
$r$ are random and independent of the lateral field, condition on
both of them.  The resulting conditional norm is bounded by the same
quantity $\lesssim_W(1+r)^4$, pointwise in the conditioning variable.
\end{proof}

Positivity and \eqref{mom:whole-strip-bound} imply
$0<Z_\gamma(f_\gamma)<\infty$ almost surely, so its logarithm in the
normalized field is well-defined.

\begin{lemma}[Fourier cutoff error]
\label{mom:cutoff-error}
Fix a bounded strip box $D$.  There is a deterministic sequence
$\varepsilon_N\to0$ such that, for all sufficiently small
$\gamma$,
\begin{equation}
 \sup_{0\leq a\leq1}
 \big\|Z_{\gamma;D}(\log a)-Z_{\gamma,N;D}(\log a)
       \big\|_{L^{\,1+(W+2)/(2\gamma^2)}(\mathbb P)}\leq\varepsilon_N.
 \label{mom:cutoff-norm}
\end{equation}
Consequently, for every $\varepsilon>0$,
\begin{equation}
 \lim_{N\to\infty}\limsup_{\gamma\downarrow0}\gamma^2\log
 \sup_{0\leq a\leq1}
 \P\left(\left|Z_{\gamma;D}(\log a)
              -Z_{\gamma,N;D}(\log a)\right|>\varepsilon\right)
                         =-\infty.
 \label{mom:cutoff-probability}
\end{equation}
The corresponding signed integrals obey the same estimates for
deterministic densities of absolute value at most one.  The bounds
also hold conditionally for independent random densities.
\end{lemma}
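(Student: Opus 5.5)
The plan is to prove \eqref{mom:cutoff-norm} by interpolating between a fixed-order moment estimate, whose smallness in $N$ is explicit, and the uniform high-moment bound of Lemma~\ref{mom:high-moment}. We then obtain \eqref{mom:cutoff-probability} from Markov's inequality exactly as described at the start of Subsection~\ref{mom:section}.

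\textbf{Second moment.} Write $D_{N}=Z_{\gamma;D}(\log a)-Z_{\gamma,N;D}(\log a)$. The martingale property of Lemma~\ref{mom:high-moment} gives $\E[D_N^2]=\E[Z^2]-\E[Z_N^2]$. By the replica formula \eqref{mom:replica-formula} with $q=2$ and its $N\to\infty$ extension, this equals
\[
 \int_{D^2}a(z)a(w)(2\sin\theta)^{-\gamma^2/2}(2\sin\theta')^{-\gamma^2/2}
 \bigl(e^{\gamma^2G_\perp(z,w)}-e^{\gamma^2G_{\perp,N}(z,w)}\bigr)\,d^2z\,d^2w .
\]
We bound the difference of exponentials by $\gamma^2|G_\perp-G_{\perp,N}|\,e^{\gamma^2(|G_\perp|+|G_{\perp,N}|)}$. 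Lemma~\ref{mom:covariance-bound} controls the exponential factor by an integrable power $(1\wedge|z-w|)^{-4\gamma^2}$ with $\gamma^2\le 1/8$. The tail $G_\perp-G_{\perp,N}=2\sum_{n>N}n^{-1}e^{-n|t-s|}\cos n\theta\cos n\theta'$ is bounded uniformly by $O(1+\log^+1/|z-w|)$, by the summation-by-parts estimate in the proof of Lemma~\ref{mom:covariance-bound}. It tends to zero off the diagonal. Dominated convergence, uniform in $a$ and in small $\gamma$ since the dominating function does not depend on them, gives $\E[D_N^2]\le\gamma^2\delta_N$ with $\delta_N\to0$. In particular $\|D_N\|_{L^2}\le\delta_N^{1/2}$.

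\textbf{Passing to order $q_\gamma$.} This is the main obstacle, because plain H\"older interpolation between $L^2$ and $L^{q'}$ with $q'>q_\gamma$ loses almost all of the $L^2$ smallness. Indeed, the interpolation exponent on the $L^2$ factor is $\frac{2}{q_\gamma}\cdot\frac{q'-q_\gamma}{q'-2}$, which is of order $\gamma^2$. We would therefore use the structure in $N$ instead. First choose $q'=\lceil(1+\eta)q_\gamma\rceil$ with $\eta>0$ small enough that $p\gamma^2q'<2$ still holds for some $p>1$; this is possible because $(W+2)/2<2$. Lemma~\ref{mom:high-moment} then applies at order $q'$.

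The smallness comes from the integer-moment difference itself. Expand $\E[D_N^{2k}]$, with $2k\approx q_\gamma$, as an alternating sum of replica integrals in which each replica carries either the cutoff covariance or the full covariance. Conditionally on $\mathcal G_N$, the difference $D_N$ is an integral of the high-mode chaos minus one, weighted by the low-mode density $e^{\gamma X_N^\perp}$. Its conditional moment is an integral over $D^{2k}$ of $\prod_j\bigl(e^{\gamma^2 G^{>N}_{ij}}\cdots\bigr)$ whose ``connected'' terms each contain at least one factor $e^{\gamma^2G^{>N}}-1$. That factor is small in $L^{p}$ of a pair variable, since $G^{>N}\to0$ away from the diagonal, while the remaining factors are bounded by the collision integral Lemma~\ref{mom:collision-integral} with a small loss in the exponent. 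This yields $\|D_N\|_{L^{q_\gamma}}\le C\,\epsilon_N^{c}$ for some fixed $c>0$, uniform in $\gamma$ and $a$, with $\epsilon_N$ the $L^{p}(D^2)$ norm of $\min\{1,|G^{>N}|\}$. This defines $\varepsilon_N$. The same argument covers signed densities, by splitting into positive and negative parts, and random densities independent of $X^\perp$, by conditioning.

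\textbf{Probability estimate.} Markov's inequality gives
\[
 \gamma^2\log\P(|D_N|>\varepsilon)\le\gamma^2q_\gamma\log(\varepsilon_N/\varepsilon).
\]
Since $\gamma^2q_\gamma\to(W+2)/2$, the $\limsup$ in $\gamma$ is at most $\frac{W+2}{2}\log(\varepsilon_N/\varepsilon)$, which tends to $-\infty$ as $N\to\infty$. This proves \eqref{mom:cutoff-probability}.
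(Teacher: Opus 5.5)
There is a genuine gap in your step ``Passing to order $q_\gamma$'', which is where the whole content of the lemma lies. You are right that interpolating your $L^2$ bound against Lemma~\ref{mom:high-moment} loses all the smallness. However, the replacement argument is only asserted, not carried out.

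A direct expansion of $\E[D_N^{2k}]$ with $2k\approx q_\gamma\sim(W+2)/(2\gamma^2)$ has a number of signed terms that grows as $\gamma\downarrow0$. There are $2^{2k}$ terms in the subset expansion, and $2^{\binom{2k}{2}}$ graphs in your cluster version. Term-by-term dominated convergence gives no uniformity in the number of replicas.

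Moreover, one small factor $e^{\gamma^2G^{>N}}-1$ per term yields, after the $2k$-th root, only $\epsilon_N^{1/(2k)}$, and this tends to $1$ as $\gamma\downarrow0$. To reach a $\gamma$-uniform bound $\epsilon_N^{c}$, three things would be needed:
\begin{enumerate}[label=(\roman*)]
\item order-$k$ small factors in every surviving term (after cancellation the survivors are graphs without isolated vertices, not ``connected'' ones);
\item a summation over all such graphs that costs only $C^{k}$;
\item a way to extract smallness near the diagonal, where $G^{>N}$ is not small, so that it appears only after the spatial integration is combined with the collision H\"older argument.
\end{enumerate}
None of this is done, so the claimed bound $\|D_N\|_{L^{q_\gamma}(\mathbb P)}\le C\epsilon_N^c$ is unproved. (Your auxiliary exponent $q'$ is also never used.)

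The paper avoids the diverging-order expansion entirely by using hypercontractivity. It fixes an even $k>4/(2-W)$ and raises the noise parameter to $\beta=\sqrt{(W+2)/(2(k-1))}$, keeping the weight $(2\sin\theta)^{-\gamma^2/2}$ unchanged. At this fixed order, the $k$th moment of the difference of two cutoff masses is a sum of only $2^k$ signed replica integrals. Each is dominated by a constant times $\prod_{i<j}(1\wedge|z_i-z_j|)^{-(W+2)/(k-1)}$ times the boundary weights. This majorant is integrable by H\"older and Lemma~\ref{mom:collision-integral}, because $pk(W+2)/(2(k-1))<2$ for some $p>1$. Since the signs sum to zero and all terms share the same off-diagonal limit, dominated convergence gives $L^k$ convergence, uniformly in $M\ge N$, in $a$, and in small $\gamma$.

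Then the Mehler operator $T_\rho$ with $\rho=\gamma/\beta$ maps ${:}e^{\beta A}{:}$ to ${:}e^{\gamma A}{:}$. The identity $\rho^2=(k-1)/(q_\gamma-1)$ is exactly the hypercontractivity condition, so $\|T_\rho Y\|_{L^{q_\gamma}(\mathbb P)}\le\|Y\|_{L^k(\mathbb P)}$. This transfers the fixed-order estimate at noise $\beta$ to the target order at noise $\gamma$. With this device your second-moment computation is unnecessary. Your Markov step and your treatment of signed and independent random densities agree with the paper.
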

\begin{proof}
A direct expansion at the diverging target moment order would have
an increasing number of signed terms. Instead we first prove convergence
at one fixed even moment order, and then use hypercontractivity to
reach the target order. Choose an even integer $k>4/(2-W)$. This is
equivalent to $k(W+2)/(2(k-1))<2$, so one can choose $p>1$ such that
\begin{equation}
 \frac{pk(W+2)}{2(k-1)}<2.
 \label{mom:fixed-replica-parameters}
\end{equation}
Temporarily use the noise parameter
\[
 \sqrt{\frac{W+2}{2(k-1)}}
\]
and keep the base weight $(2\sin\theta)^{-\gamma^2/2}$ unchanged.
Expand the even $k$-th
power of the difference of two finite-cutoff masses.  In each
replica choose $n_i=M$ or $n_i=N$ according to a subset
$J\subset\{1,\ldots,k\}$, with $n_i=M$ exactly when $i\in J$.
The mixed covariance in \eqref{mom:covariance-formula} gives the
signed kernel
\begin{equation}
 \sum_{J\subset\{1,\ldots,k\}}(-1)^{k-|J|}
 \exp\!\left\{\frac{W+2}{2(k-1)}
                    \sum_{i<j}G_{\perp,\min(n_i,n_j)}(z_i,z_j)\right\}.
 \label{mom:mixed-replica-kernel}
\end{equation}
Its absolute value is bounded, uniformly in $M\geq N$, by a
constant depending only on $W,k$ times
\begin{equation}
 \prod_{i<j}(1\wedge|z_i-z_j|)^{-(W+2)/(k-1)}.
 \label{mom:fixed-replica}
\end{equation}
This bound is integrable after multiplication by the $k$
boundary weights, uniformly for small $\gamma$.  Indeed,
H\"older with $p,p/(p-1)$ and
Lemma~\ref{mom:collision-integral} integrate its $p$-th power by
\eqref{mom:fixed-replica-parameters}.  Fix a sufficiently small
$\varepsilon>0$ with $p\varepsilon^2/(2(p-1))<1$; for
$0<\gamma\leq\varepsilon$ the remaining weight is dominated by
$\prod_i\max\{1,(2\sin\theta_i)^{-\varepsilon^2/2}\}$, whose
$p/(p-1)$-th power is integrable.

For a configuration of distinct points, all mixed covariances
converge to $G_\perp(z_i,z_j)$ uniformly over $M\geq N$ as
$N\to\infty$.  Every term of \eqref{mom:mixed-replica-kernel}
then has the same limit, while its coefficients sum to zero.
The supremum over the countably many $M\geq N$ of its absolute
value therefore tends to zero almost everywhere.  Dominated
convergence with the preceding envelope proves
\begin{align}
 \sup_{\substack{0<\gamma\leq\varepsilon\\0\leq a\leq1,\ M\geq N}}
 \Bigg\|\int_D a(z)(2\sin\theta)^{-\gamma^2/2}
 \Big(&:e^{\sqrt{(W+2)/(2(k-1))}\,X_M^\perp(z)}:
 \notag\\[-2pt]
 &-:e^{\sqrt{(W+2)/(2(k-1))}\,X_N^\perp(z)}:\Big)\dd^2 z
       \Bigg\|_{L^k(\mathbb P)}\longrightarrow0.
 \label{mom:fixed-replica-convergence}
\end{align}
The signed expansion is legitimate term by term because each
term has this same integrable bound.  Evenness of $k$ makes the
expanded power the absolute $k$-th power.  No full-chaos estimate
was assumed in this two-cutoff argument.

For $0\le\rho\le1$, let $X^{\perp\prime}$ be an independent copy of
$X^\perp$ and let $\E'$ integrate this copy. For a Gaussian functional
$Y$, the Mehler operator is
$T_\rho Y(X^\perp)=\E'[Y(\rho X^\perp+\sqrt{1-\rho^2}X^{\perp\prime})]$.  Gaussian hypercontractivity \cite{BBG} states
\[
 \|T_\rho Y\|_{L^q(\mathbb P)}\leq\|Y\|_{L^k(\mathbb P)},
 \qquad q\geq k>1,\quad \rho^2\leq(k-1)/(q-1).
\]
It holds on the present Gaussian space by finite-dimensional
approximation.  The action on a Wick exponential can be checked
without a series expansion. If $A$ is a centered Gaussian linear
functional of variance $\sigma^2$ and $A'$ is its independent copy,
then
\[
 \E'\exp\left\{\alpha(\rho A+\sqrt{1-\rho^2}A')
                         -\frac{\alpha^2\sigma^2}{2}\right\}
 =\exp\left\{\alpha\rho A-\frac{\alpha^2\rho^2\sigma^2}{2}\right\}.
\]
Thus $T_\rho:e^{\alpha A}:=:e^{\rho\alpha A}:$.
Integration against the deterministic density commutes with $T_\rho$
by integrability of the finite-cutoff masses. Take
\[
 \rho=\gamma\sqrt{\frac{2(k-1)}{W+2}},
 \qquad q=1+\frac{W+2}{2\gamma^2}.
\]
For small $\gamma$, $q\geq k$ and the hypercontractive condition
is an equality.  Applying it to the difference in
\eqref{mom:fixed-replica-convergence} replaces the noise
parameter by $\gamma$ and preserves the deterministic boundary
weight.  It gives the norm bound in \eqref{mom:cutoff-norm}
first between cutoffs $M,N$ and then, by
Lemma~\ref{mom:high-moment}, with $M\to\infty$.
Finally Markov's inequality gives
\[
 \gamma^2\log\sup_a\P\left(
 |Z_{\gamma;D}(\log a)-Z_{\gamma,N;D}(\log a)|>\varepsilon\right)
 \leq\left(\gamma^2+\frac{W+2}{2}\right)
                          \log(\varepsilon_N/\varepsilon),
\]
once $\varepsilon_N<\varepsilon$, proving
\eqref{mom:cutoff-probability}.  Absolute values of products of
signed densities give the same bounds.  Conditioning proves the
independent-density assertion; the supremum is always outside
the norm or probability, not inside the random variable.
\end{proof}

\subsection{Canonical regularization of the mass}
\label{reg:section}

The normalized expectation \eqref{intro:ratio} involves the total chaos
mass. To approximate masses with independent radial densities by continuous
Gaussian functionals, we must identify their canonical chaos with the
Fourier-cutoff chaos on the same probability space.  The comparison must
also hold at moment orders proportional to $\gamma^{-2}$, since the mass
is raised to the power $W/\gamma^2-1/2$.  The analytic comparison below
uses the covariance and moment estimates proved in
Lemma~\ref{mom:covariance-bound}, the fixed-replica bound
\eqref{mom:fixed-replica}, and Lemma~\ref{mom:cutoff-error}.

Fix a bounded strip box $D$, possibly adjoining both horizontal boundaries.
Choose a nonnegative radial smooth probability density $\rho$, supported
in the unit disk, and convolve the even, $2\pi$-periodic extension of $X^\perp$
with $\varepsilon^{-2}\rho(\cdot/\varepsilon)$, $0<\varepsilon<1/4$.
Write the resulting field as $X^\perp_\varepsilon$.  Its kernel on the strip is
\[
 K_\varepsilon((t,\theta),(s,\phi))
 =\sum_{m\in\mathbb Z}\varepsilon^{-2}\left[
 \rho\!\left(\frac{t-s}{\varepsilon},
             \frac{\theta-\phi-2\pi m}{\varepsilon}\right)
 +\rho\!\left(\frac{t-s}{\varepsilon},
              \frac{\theta+\phi-2\pi m}{\varepsilon}\right)\right].
\]
Folding the angular integral shows that this is a nonnegative probability
kernel.  It satisfies
$K_\varepsilon(z,x)\lesssim_\rho
\varepsilon^{-2}\ind_{\{|x-z|\le\varepsilon\}}$ and preserves zero
angular mean.  More explicitly, $X^\perp_\varepsilon$ is the $L^2(\mathbb P)$ limit of
$\int K_\varepsilon(z,x)X^\perp_N(x)\,\dd x$.  Independence of the modes makes
the partial variance sums increasing, while the integrated logarithmic
covariance bound makes them finite.  This construction couples all the
regularizations used below.  Colons always subtract the full variance of
the particular regularized field. In the next lemma, $U$ and $V$
each denote either a Fourier cutoff $X_N^\perp$ or a smooth
regularization $X_\varepsilon^\perp$, with arbitrary choices of their
two cutoff parameters.

\begin{lemma}
\label{reg:mixed-covariance}
All Fourier/Fourier, Fourier/smooth, and smooth/smooth mixed covariances
satisfy
\begin{equation}
 \Cov(U(z),V(w))\le2\log^+\frac1{|z-w|}+O(1),
 \label{reg:covariance-bound}
\end{equation}
uniformly in their cutoffs.  For distinct interior points they converge
jointly to $G_\perp(z,w)$ when both regularizations are removed.
\end{lemma}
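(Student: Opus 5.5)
The plan is to write every regularization as a linear smoothing of the Fourier cutoffs, and so reduce each mixed covariance to an average of the Fourier kernel $G_{\perp,N}$. Lemma~\ref{mom:covariance-bound} already controls that kernel uniformly in $N$. On the common mode coupling, $X_\varepsilon^\perp(z)=\int K_\varepsilon(z,x)X^\perp(x)\,\dd x$ is the $L^2(\mathbb P)$ limit of $\int K_\varepsilon(z,x)X_N^\perp(x)\,\dd x$. Independence of the modes therefore gives
\[
 \Cov(X_M^\perp(z),X_\varepsilon^\perp(w))=\int K_\varepsilon(w,y)G_{\perp,M}(z,y)\,\dd y,
\]
\[
 \Cov(X_\varepsilon^\perp(z),X_{\varepsilon'}^\perp(w))=\iint K_\varepsilon(z,x)K_{\varepsilon'}(w,y)G_\perp(x,y)\,\dd x\,\dd y .
\]
The mixed Fourier/Fourier case is already the identity $G_{\perp,\min(M,N)}$ in \eqref{mom:covariance-formula}. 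Here the kernels are understood on the strip after folding, and this is consistent with the even periodic extension because the modes are cosines.

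For the upper bound, insert $G\le 2\log^+(1/|x-y|)+C$, which holds uniformly in cutoffs by Lemma~\ref{mom:covariance-bound}. Since $K_\varepsilon(z,\cdot)$ is a probability kernel supported in the $\varepsilon$-ball, the problem reduces to the averaged logarithm estimate
\[
 \iint K_\varepsilon(z,x)K_{\varepsilon'}(w,y)\log^+\frac1{|x-y|}\,\dd x\,\dd y\le \log^+\frac1{|z-w|}+O(1).
\]
This I split into two cases.

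\begin{itemize}
\item If $|z-w|\ge 2(\varepsilon\vee\varepsilon')$, then $|x-y|\ge |z-w|/2$ on the supports, and the bound follows with an additive $\log 2$.
\item Otherwise, use $K_\varepsilon\lesssim\varepsilon^{-2}\ind_{\{|x-z|\le\varepsilon\}}$. Integrating the logarithm against a density bounded by $C\varepsilon^{-2}$ on a disk of radius $\varepsilon$ gives at most $\log(1/\varepsilon)+O(1)$, uniformly in the position of the other point. Since $\varepsilon\vee\varepsilon'\gtrsim|z-w|$ in this case, $\log(1/(\varepsilon\vee\varepsilon'))\le\log^+(1/|z-w|)+O(1)$, which is the claim. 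Integrate in the variable attached to the larger scale first.
\end{itemize}

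The Fourier/smooth case is the same with one kernel replaced by a Dirac mass. The folded images with $m\neq0$ and the reflected term contribute only where $z$ is within $\varepsilon$ of the horizontal boundary. There the reflected distance $|x-\bar y|$ dominates the relevant distance up to constants, exactly as in the angular-distance comparison of Lemma~\ref{mom:covariance-bound}, so the same bound applies. Only an upper bound is claimed, so the negative part of $G$ needs no control.

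For convergence at distinct interior points $z\ne w$, take $\varepsilon,\varepsilon'$ below $\tfrac13\min\{|z-w|,\operatorname{dist}(z,\partial\Sstrip),\operatorname{dist}(w,\partial\Sstrip)\}$. Then no reflected or periodic images appear, and $x,y$ stay in a compact set $\Gamma$ of pairs at distance at least $|z-w|/3$. On $\Gamma$, the kernels $G_{\perp,N}$ converge to $G_\perp$ uniformly by the summation-by-parts estimate in Lemma~\ref{mom:covariance-bound}, since $|1-q|$ is bounded below there. The limit $G_\perp$ is continuous on $\Gamma$ by the closed form \eqref{mom:full-covariance}. The averages against probability kernels concentrating at $z$ and $w$ therefore converge to $G_\perp(z,w)$ jointly as all cutoffs are removed.

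The main obstacle I expect is the near-diagonal averaged-log estimate when $\varepsilon\ne\varepsilon'$ and the points lie near the horizontal boundary. There one must make sure the folding does not double the constant in front of the logarithm beyond what $2\log^+$ allows. I would handle this by noting that the two images lie in disjoint half-planes, and that only one of them can be within distance $\varepsilon$ of a given interior point except within $\varepsilon$ of the boundary. In that boundary region both images are comparable, and the factor $2$ in front of $\log^+$ already accounts for the two terms of $G_\perp$.
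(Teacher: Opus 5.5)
Your proposal is correct and follows the paper's proof: the same kernel representations of the mixed covariances, the averaged-logarithm bound using the mass-one property of the kernels, and uniform convergence of the Fourier series off the image diagonals for the limit. The paper iterates the one-kernel estimate $\int K_\varepsilon(w,x)\log^+\frac1{|z-x|}\,\dd x\le\log^+\frac1{|z-w|}+O(1)$ where you split on $\varepsilon\vee\varepsilon'$, but the content is the same. Two small corrections:
\begin{enumerate}
\item Your separation case needs $|z-w|\ge2(\varepsilon+\varepsilon')$, not $|z-w|\ge2(\varepsilon\vee\varepsilon')$, to guarantee $|x-y|\ge|z-w|/2$; this only changes the additive constant.
\item The boundary ``obstacle'' you anticipate does not arise. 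The folded kernel is itself a probability kernel on the strip bounded by $C_\rho\varepsilon^{-2}\ind_{\{|x-z|\le\varepsilon\}}$, so averaging cannot raise the coefficient $2$ already present in Lemma~\ref{mom:covariance-bound}.
\end{enumerate}
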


\begin{proof}
The common mode coupling gives
\begin{align*}
 \Cov(X^\perp_N(z),X^\perp_\varepsilon(w))
 &=\int K_\varepsilon(w,y)G_{\perp,N}(z,y)\,\dd y,\\
 \Cov(X^\perp_\varepsilon(z),X^\perp_{\varepsilon'}(w))
 &=\iint K_\varepsilon(z,x)K_{\varepsilon'}(w,y)G_\perp(x,y)\,\dd x\,\dd y.
\end{align*}
The passage from finite modes is justified by logarithmic domination.
For any such probability kernel,
\[
 \int K_\varepsilon(w,x)\log^+\frac1{|z-x|}\,\dd x
 \le\log^+\frac1{|z-w|}+O_\rho(1).
\]
If $|z-w|\ge2\varepsilon$, use $|z-x|\ge|z-w|/2$.
Otherwise scaling bounds the integral by
$\log(1/\varepsilon)+O_\rho(1)$, using
$\int_{|u|<1}\log(1/|u|)\,\dd u=\pi/2$.
Iterating this estimate handles two kernels without increasing the
coefficient of the logarithm.  Lemma~\ref{mom:covariance-bound} therefore
proves \eqref{reg:covariance-bound}.

Off the image diagonals, the Fourier series converge uniformly: summation
by parts bounds the logarithmic-series tail by
$2/((N+1)|1-q|)$ for $|q|\le1$, $q\ne1$.
The shrinking supports and mass-one property of the kernels then give
joint convergence.  No relation between $N$ and either smoothing scale
is required.
\end{proof}

For the comparison, choose an even integer $k>4/(2-W)$ and then $p>1$
such that $pk(W+2)/(2(k-1))<2$.  These choices are fixed while
$\gamma\to0$.  Test densities $a$ below are deterministic and measurable,
with $0\le a\le1$; every supremum over $a$ lies outside the probability norm.

\begin{lemma}
\label{reg:comparison}
As $N\to\infty$ and $\varepsilon\downarrow0$, jointly,
\begin{equation}
 \sup_{0\le a\le1}
 \left\|\int_D a(z)(2\sin\theta)^{-\gamma^2/2}
 \big(:e^{\gamma X^\perp_N(z)}:-:e^{\gamma X^\perp_\varepsilon(z)}:\big)\,\dd^2 z
 \right\|_{L^{1+(W+2)/(2\gamma^2)}(\mathbb P)}\longrightarrow0,
 \label{reg:large-moment-comparison}
\end{equation}
uniformly for all sufficiently small $\gamma>0$.  In particular, smooth
and Fourier regularizations define the same weighted chaos on $D$.
\end{lemma}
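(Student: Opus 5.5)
The proof follows the two-step scheme of Lemma~\ref{mom:cutoff-error}. First we prove convergence at the fixed even order $k$ with the auxiliary noise parameter $\beta_k=\sqrt{(W+2)/(2(k-1))}$. Then Gaussian hypercontractivity transports it to the diverging order $q_\gamma=1+(W+2)/(2\gamma^2)$ and noise parameter $\gamma$.

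\textbf{Fixed order.} Expand the $k$-th power of the difference. In each of the $k$ replicas, choose either the Fourier field $X_N^\perp$ or the smooth field $X_\varepsilon^\perp$, indexed by a subset $J\subset\{1,\dots,k\}$ with sign $(-1)^{k-|J|}$. The Wick self-variances cancel exactly, as in \eqref{mom:replica-formula}. Each term is therefore
\[
\int_{D^k}\prod_i a(z_i)(2\sin\theta_i)^{-\gamma^2/2}\exp\Bigl\{\beta_k^2\sum_{i<j}\Cov(U_i(z_i),U_j(z_j))\Bigr\}\,\dd\mathbf z .
\]
Here each $U_i$ is $X_N^\perp$ or $X_\varepsilon^\perp$, and the covariance is the appropriate mixed one.

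By Lemma~\ref{reg:mixed-covariance}, every such kernel is bounded, uniformly in $N$, $\varepsilon$ and $J$, by a constant times \eqref{mom:fixed-replica}. Hölder with exponents $p$ and $p/(p-1)$, together with Lemma~\ref{mom:collision-integral}, makes this envelope integrable against the boundary weights, uniformly for $\gamma\le\varepsilon_0$. This uses the choice $pk(W+2)/(2(k-1))<2$.

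For almost every configuration of distinct interior points, all mixed covariances converge jointly to $G_\perp(z_i,z_j)$; this is the second assertion of Lemma~\ref{reg:mixed-covariance}. Consequently the signed kernel, whose coefficients sum to zero, tends to zero pointwise. The boundary lines have measure zero, so they do not affect this.

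Dominated convergence then gives the $L^k(\mathbb P)$ convergence analogue of \eqref{mom:fixed-replica-convergence}. It is uniform in $0\le a\le1$ and in small $\gamma$, because the envelope does not depend on either.

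\textbf{Main obstacle: hypercontractivity with the smooth field.} We need $T_\rho:e^{\beta_kX_\varepsilon^\perp}:\,=\,:e^{\rho\beta_kX_\varepsilon^\perp}:$. This holds because $X_\varepsilon^\perp(z)$ is a centered linear functional of the same Gaussian space, namely an $L^2(\mathbb P)$ limit of linear combinations of the modes $U_n$. The Mehler computation from Lemma~\ref{mom:cutoff-error} therefore applies verbatim, with the full variance of $X_\varepsilon^\perp$ subtracted.

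Integration against $a(z)(2\sin\theta)^{-\gamma^2/2}$ commutes with $T_\rho$ by Fubini. To justify this, first work at finite modes and pass to the limit, using the integrability of these masses. We take $\rho=\gamma\sqrt{2(k-1)/(W+2)}$, so that $\rho^2=(k-1)/(q_\gamma-1)$.

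Hypercontractivity then bounds the $L^{q_\gamma}(\mathbb P)$ norm in \eqref{reg:large-moment-comparison} by the $L^k(\mathbb P)$ quantity from the fixed-order step. The boundary weight $(2\sin\theta)^{-\gamma^2/2}$ is deterministic and is kept unchanged throughout. This yields \eqref{reg:large-moment-comparison} jointly as $N\to\infty$ and $\varepsilon\downarrow0$, uniformly in small $\gamma$ and in $a$.

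\textbf{Identification of the chaos.} Lemma~\ref{mom:cutoff-error} gives $L^{q_\gamma}(\mathbb P)$ convergence of the Fourier masses to $Z_{\gamma;D}(\log a)$. Combined with \eqref{reg:large-moment-comparison}, this shows that the smooth masses converge to the same limit. Applying this to a countable dense family of continuous densities, and using the Riesz argument of Lemma~\ref{mom:measure}, identifies the two weighted chaos measures on $D$.
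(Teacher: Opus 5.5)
Your proposal is correct and follows the paper's proof essentially step for step. It uses the fixed even order $k$ with auxiliary noise $\sqrt{(W+2)/(2(k-1))}$, dominates the $2^k$-term mixed-replica expansion via Lemma~\ref{reg:mixed-covariance} and the fixed-replica envelope, and lets the signs cancel at the common off-diagonal limit; it then applies hypercontractivity with $\rho=\gamma\sqrt{2(k-1)/(W+2)}$ and concludes with the triangle inequality against Lemma~\ref{mom:cutoff-error}. Your additional remarks (that $X_\varepsilon^\perp$ lies in the same Gaussian space, that $T_\rho$ commutes with the spatial integral, and the Riesz step identifying the measures) only make explicit what the paper leaves implicit.
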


\begin{proof}
First replace the noise coefficient $\gamma$ in the two Wick exponentials
by $\sqrt{(W+2)/(2(k-1))}$, leaving the weight
$(2\sin\theta)^{-\gamma^2/2}$ unchanged, and expand the even $k$th moment
of their difference.  There are $2^k$ mixed-replica terms, with sign
$(-1)^{k-|S|}$ when the Fourier field is assigned to the indices in
$S\subset\{1,\ldots,k\}$.  Wick subtraction cancels all self variances.
Each term is the integral of the exponential of the pair covariances
multiplied by $(W+2)/(2(k-1))$.

Lemma~\ref{reg:mixed-covariance} and the fixed-replica estimate
\eqref{mom:fixed-replica} give one integrable absolute majorant,
uniform in $a$, in both cutoffs, and in sufficiently small $\gamma$.
For each off-diagonal configuration all exponential terms have the same
limit, whereas their signs sum to $(1-1)^k=0$.
Dominated convergence proves convergence of this $L^k(\mathbb P)$ norm to zero.

The Ornstein--Uhlenbeck semigroup on the common Gaussian space sends
$:e^{\alpha U}:$ to $:e^{\alpha e^{-\tau}U}:$ for every centered Gaussian
linear functional $U$.  Choose
$e^{-\tau}=\gamma\sqrt{2(k-1)/(W+2)}$.
Then $e^{2\tau}(k-1)=(W+2)/(2\gamma^2)$, so hypercontractivity transfers
the preceding $L^k(\mathbb P)$ estimate to
\eqref{reg:large-moment-comparison}.
The Fourier limit exists by Lemma~\ref{mom:cutoff-error}; the triangle
inequality identifies the smooth limit with it.
\end{proof}

We finally relate this construction to Euclidean circle averages.
Circle averages are coupled through the same modes; near the boundary
we may use the even periodic extension.  Inside any compact subset of
the open strip, small enough circles are ordinary Euclidean circles.

\begin{lemma}
\label{reg:canonical}
The common limit in Lemma~\ref{reg:comparison} is the circle-average
Wick chaos.  Its canonical area density is
$(2\sin\theta)^{-\gamma^2/2}:e^{\gamma X^\perp}:$.
\end{lemma}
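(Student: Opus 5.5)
The plan is to treat circle averages as one more linear regularization on the common Gaussian space, and to rerun the mixed-replica argument of Lemma~\ref{reg:comparison}. Then two facts remain to check: that circle-average Wick chaos equals the Fourier limit, and that the factor $(2\sin\theta)^{-\gamma^2/2}$ is the correct conversion from Wick normalization to the canonical normalization \eqref{setup:circle-area}.

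Fix a compact interior box $D'$ whose distance to the horizontal boundary exceeds $2\varepsilon_0$. For $\varepsilon<\varepsilon_0$, write $X^\perp_{\circ,\varepsilon}(z)$ for the circle average of radius $\varepsilon$. This is the $L^2(\mathbb P)$ limit of the circle averages of $X_N^\perp$, realized through the uniform probability measure on the circle. That measure is a probability kernel supported in the $\varepsilon$-disk, but it is not absolutely continuous.

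\textbf{Step 1: covariance bounds for the circle kernel.} The averaging estimate used in Lemma~\ref{reg:mixed-covariance} only requires
\[
 \int\log^+\frac1{|z-x|}\,\nu_{w,\varepsilon}(dx)\le\log^+\frac1{|z-w|}+O(1).
\]
For the uniform circle measure this follows from Newton's theorem: the logarithmic potential of the circle equals $\log^+(1/\max(|z-w|,\varepsilon))$. So \eqref{reg:covariance-bound} holds for all circle/Fourier and circle/circle pairs, and the off-diagonal limits are again $G_\perp$.

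\textbf{Step 2: comparison at fixed order, then transfer.} On $D'$ I repeat the proof of Lemma~\ref{reg:comparison} verbatim. Expand the even $k$th moment with parameter $\sqrt{(W+2)/(2(k-1))}$, apply the majorant \eqref{mom:fixed-replica}, and use the cancellation of signs. Dominated convergence then shows the $L^k$ difference between circle-average and Fourier Wick masses tends to zero. The Ornstein--Uhlenbeck hypercontractive transfer lifts this to the order $1+(W+2)/(2\gamma^2)$. Hence the circle-average Wick chaos equals $\mu_\gamma$ of Lemma~\ref{mom:measure} on interior boxes. Because $\mu_\gamma$ gives no mass to the boundary and Lemma~\ref{mom:measure} already fixes it globally, exhausting the open strip by such boxes identifies the measures. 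This matches the convention that the circle-average limit is tested against compactly supported continuous functions.

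\textbf{Step 3: the deterministic factor.} By the cutoff identity of Subsection~\ref{setup:wick-section},
\[
 \varepsilon^{\gamma^2/2}e^{\gamma X_{\circ,\varepsilon}}=
 :e^{\gamma X_{\circ,\varepsilon}}:\exp\{\tfrac{\gamma^2}2(\Var X_{\circ,\varepsilon}(z)-\log\varepsilon^{-1})\}.
\]
The variance is the double circle average of $G_\perp$. Using \eqref{setup:kernel-sum}, $G_\perp(z,w)=-\log|z-w|+H(z,w)$, with $H$ continuous near the diagonal in the interior and $H(z,z)=-\log(2\sin\theta)$. The double circle average of $-\log|z-w|$ is exactly $\log\varepsilon^{-1}$ by Newton's theorem, and the average of $H$ converges to $H(z,z)$ uniformly on $D'$. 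Therefore the exponential factor converges uniformly on $D'$ to $(2\sin\theta)^{-\gamma^2/2}$. Multiplying a convergent random measure by a uniformly convergent continuous density preserves convergence in probability against $C_c$ tests, which gives the stated density.

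\textbf{Main obstacle.} The delicate point is Step 2's dominated convergence. The circle kernel is singular, so replicas with distinct centres can have overlapping circles. I would therefore check that the majorant \eqref{mom:fixed-replica} needs only the uniform logarithmic bound from Step 1, which holds for all pairs of points, including $|z-w|<\varepsilon$. Pointwise convergence is only needed for distinct fixed points as $\varepsilon\to0$, and that follows from the continuity of $G_\perp$ off the diagonal.
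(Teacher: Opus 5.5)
Your proposal is correct and follows the paper's proof. The circle-potential (Newton) identity gives the mixed covariance bounds and the $L^2(\mathbb P)$ existence of circle averages; Lemma~\ref{reg:comparison} is then rerun unchanged with the hypercontractive transfer; and the finite part of the circle-average variance is $-\log(2\sin\theta)$ because the double-circle logarithmic correction vanishes exactly.

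The only difference is at the horizontal boundary. The paper proves the collar estimate \eqref{reg:collar}, with circles near the boundary defined by even periodic extension, so that the identification holds on boxes touching the boundary. You instead work only on interior boxes and use that $\mu_\gamma$ charges no boundary and that canonical area is tested against $C_c(\Sstrip)$. That is enough for the statement as posed.
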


\begin{proof}
The circle-potential identity
\[
 \frac1{2\pi}\int_0^{2\pi}-\log|q-\varepsilon e^{iu}|\,\dd u
 =-\log\max\{|q|,\varepsilon\}
\]
follows from the harmonic mean-value property when $|q|>\varepsilon$
and from the logarithmic power series when $|q|<\varepsilon$;
the equality case follows by integrability of the logarithmic singularity.
Applied to the two image logarithms, it proves
\eqref{reg:covariance-bound} for Fourier/circle and circle/circle
covariances, with bounded cylinder corrections.  It also gives existence
of the circle averages in Gaussian $L^2(\mathbb P)$.  Their mixed off-diagonal
limits are $G_\perp$, so Lemma~\ref{reg:comparison} applies unchanged.

For $D\cap\{\min(\theta,\pi-\theta)<\varepsilon\}$, the positive
fixed-replica integral tends to zero by dominated convergence.
The same hypercontractivity argument therefore gives
\begin{equation}
 \sup_U\left\|\int_{D\cap\{\min(\theta,\pi-\theta)<\varepsilon\}}
 (2\sin\theta)^{-\gamma^2/2}:e^{\gamma U(z)}:\,\dd^2 z
 \right\|_{L^{1+(W+2)/(2\gamma^2)}(\mathbb P)}\longrightarrow0,
 \label{reg:collar}
\end{equation}
uniformly for small $\gamma$.  The supremum includes Fourier, smooth,
auxiliary circle regularizations, and their limit.  Thus identification
on compact interior sets extends to the whole box.

At interior points, averaging the local covariance expansion gives
\begin{equation}
 \Var X^\perp_\varepsilon(t,\theta)
 =\log(1/\varepsilon)-\log(2\sin\theta)
 -\iint\rho(x)\rho(y)\log|x-y|\,\dd x\,\dd y+o(1).
 \label{reg:finite-part}
\end{equation}
The error is locally uniform, and the double-integral correction is zero
for circle averages by the same potential identity.
Consequently, logarithmic-only normalization of the smooth field requires
the factor
$\exp\{(\gamma^2/2)\iint\rho(x)\rho(y)\log|x-y|\,\dd x\,\dd y\}$
to agree with canonical area.  Full Wick subtraction requires no such
factor and gives exactly the density stated in the lemma.
The expansion \eqref{reg:finite-part} is used only in the interior;
\eqref{reg:collar} handles the boundary independently.
\end{proof}

These conclusions apply to an independent radial density by conditioning.
To identify the circle-average chaos of the full field, fix $\gamma$
and a compact interior set $D$, and condition on its continuous
radial part $R$. Let $R_\varepsilon(z)$ be the circle average of
the function $z\mapsto R(\Re z)$. Uniform continuity on a slightly
larger compact interval gives
\[
 \delta_\varepsilon:=\sup_{z\in D}
       |R_\varepsilon(z)-R(\Re z)|\longrightarrow0.
\]
For any nonnegative continuous test supported in $D$, its
regularized full-field mass, using
$\varepsilon^{\gamma^2/2}e^{\gamma(R_\varepsilon+X^\perp_\varepsilon)}$,
lies between $e^{-\gamma\delta_\varepsilon}$ and
$e^{\gamma\delta_\varepsilon}$ times the corresponding mass using
$R(\Re z)$ instead of $R_\varepsilon(z)$. The latter converges by
the preceding common-coupling identification. The sandwich proves
\eqref{setup:area} on $D$, and a countable interior exhaustion
identifies the measures on the whole open strip. This argument
is made at each fixed $\gamma$; it requires no modulus of continuity
uniform in $\gamma$ for the random radial processes.

For a radial density $e^{f_\gamma(t)}\le1$, the comparison applies
directly on each bounded box. Proposition~\ref{mom:whole-strip}
then removes the horizontal cutoff under
\eqref{decomp:radial-envelope}.

\subsection{The unified joint LDP}
\label{ldp:joint-section}

We now prove the joint LDP for each fixed locally absolutely continuous
radial displacement $g:\R\to\R$ with $g(0)=0$ and
$\int_\R|g'(t)|^2\,dt<\infty$. The area is not continuous
on the Gaussian state space, so we first replace it by a mass on a
bounded box with finitely many angular modes. We prove that the random
approximation errors have arbitrarily negative logarithmic rate and
that the deterministic approximations converge uniformly on every
Gaussian-energy sublevel. A single diagonal sequence then satisfies
the extended contraction principle, identifying the area coordinate
without adding any independent cost.

Put $X_\gamma=(\gamma B,\gamma X^\perp)$ and
$\mathcal E=\mathcal B\times E_s$. For fixed $g$ as above, set
\begin{equation}\label{ldp:unshifted-triple}
 A_\gamma^g=Z_\gamma(d_\gamma(\gamma B+g)),\qquad
 Y_\gamma^g=(X_\gamma,A_\gamma^g),\qquad
 A_0^g(u,v)=\int_{\Sstrip}e^{d_0(u+g;t)+v(t,\theta)}\,dt\,d\theta.
\end{equation}
The last expression is defined on finite-energy inputs. The bounds
below show that it is finite and strictly positive there.

\begin{lemma}[Joint LDP with a fixed radial displacement]
\label{ldp:joint}\label{ldp:unshifted-joint}
For each fixed locally absolutely continuous $g$ with $g(0)=0$ and
$g'\in L^2(\R)$, the laws of $Y_\gamma^g$ satisfy a good LDP on
$\mathcal E\times[0,\infty)$ at speed $\gamma^{-2}$, with rate
\begin{equation}\label{ldp:unshifted-joint-cost}
 I_g(u,v,a)=
 \begin{cases}
 I_G(u,v),& I_G(u,v)<\infty,\quad a=A_0^g(u,v),\\
 +\infty,&\text{otherwise}.
 \end{cases}
\end{equation}
These laws are exponentially tight. The deterministic mass $A_0^g$
is continuous on every sublevel of $I_G$.
\end{lemma}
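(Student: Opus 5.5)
The plan is to use the extended contraction principle (Dembo--Zeitouni, Theorem 4.2.23) applied to the Gaussian LDP of Lemma~\ref{ldp:gaussian}. For a truncation $T$ and mode cutoff $N$, define the continuous maps
\[
 F_{T,N}(u,v)=\int_{[-T,T]\times(0,\pi)}e^{d_0(u+g;t)+\Pi_Nv(t,\theta)}\,dt\,d\theta,
\]
where $\Pi_N$ is projection onto the first $N$ cosine modes. Continuity on $\mathcal B\times E_s$ holds because $d_0$ is continuous for local uniform convergence and each mode coefficient $\langle v(t),e_n\rangle$ is continuous in $t$, uniformly on $[-T,T]$. The random counterpart is $Z_{\gamma,N;D_T}(d_\gamma(\gamma B+g))$, which includes the Wick factor $e^{-\gamma^2\Var X_N^\perp/2}$ and the weight $(2\sin\theta)^{-\gamma^2/2}$. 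It differs from $F_{T,N}(X_\gamma)$ by the factor $e^{-\gamma^2\Var X_N^\perp/2}(2\sin\theta)^{-\gamma^2/2}$ and by the replacement of $d_\gamma$ with $d_0$. At fixed $N$, $\Var X_N^\perp\lesssim\log N$, so the Wick factor tends to $1$ uniformly. The angular weight contributes an error that is $O(\gamma^2)$ in a suitable sense after splitting off a collar near $\theta\in\{0,\pi\}$, which is handled by \eqref{reg:collar}. Finally $|d_\gamma-d_0|\le\gamma^2T/2$ on the box. These corrections are multiplicative and close to $1$ on events where $F_{T,N}(X_\gamma)$ is bounded, and such events have arbitrarily negative rate complements by exponential tightness.

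The key steps are as follows. (i) Superexponential smallness of the random error
\[
 |A_\gamma^g-Z_{\gamma,N;D_T}(d_\gamma(\gamma B+g))|.
\]
Split this error into the tail $|t|>T$ and the Fourier error on $D_T$. For the tail, intersect with $\{\|\gamma B\|_*\le r\}$, which has complement of probability at most $C_\epsilon e^{-\epsilon r^2/\gamma^2}$ by the Fernique bound. Conditionally on $B$ there, the envelope \eqref{ldp:tail-envelope} holds with parameter $r+\|g\|_*$, so Proposition~\ref{mom:whole-strip} gives an $L^{q_\gamma}$ norm of order $e^{-WT/4}$. Markov at order $q_\gamma$ then gives a rate of at most $\frac{W+2}{2}\log(e^{-WT/4}/\eta)$, which tends to $-\infty$ as $T\to\infty$; afterwards let $r\to\infty$. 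For the box, Lemma~\ref{mom:cutoff-error}, applied conditionally with density $a=e^{d_\gamma(\gamma B+g)}\le1$ independent of $X^\perp$, gives a rate $\to-\infty$ as $N\to\infty$, uniformly in $\gamma$. Combining, the choice $T_m$ first and then $N_m$ yields one diagonal sequence of exponentially good approximations.

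(ii) Uniform deterministic convergence: $\sup_{I_G\le L}|F_{T,N}-A_0^g|\to0$. For the tail, use $e^{d_0(u+g)}\le e^{-W|t|/2+c(L)(1+|t|)^{3/4}}$ together with Lemma~\ref{var:cell-exponential} (with $q=1$, giving a bound $e^{L/2}$ per cell). For the modes, use $|e^{x}-e^{y}|\le|x-y|(e^x+e^y)$, Cauchy--Schwarz, the same lemma with $q=2$ applied to both $v$ and $\Pi_Nv$ (whose energy is no larger), and the bound $\|v-\Pi_Nv\|_{L^2(D_T)}\le N^{-1}\|v_\theta\|_{L^2}\lesssim N^{-1}\sqrt L$. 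The same estimates show that $A_0^g$ is continuous on sublevels, as a uniform limit of continuous maps there, and that it is positive and finite.

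(iii) Conclusion. The extended contraction principle then yields a good LDP for $A_\gamma^g$, jointly with $X_\gamma$ if we contract $(X_\gamma,\cdot)$ via the maps $(x)\mapsto(x,F_{T_m,N_m}(x))$. The resulting rate is $\inf\{I_G(u,v):(u,v,A_0^g(u,v))=(u,v,a)\}$, which is exactly \eqref{ldp:unshifted-joint-cost}. Exponential tightness follows from goodness together with the exponential-good approximation, or directly from tightness of $X_\gamma$ combined with the moment bound at order $q_\gamma$ on $A_\gamma^g$, which bounds large-area probabilities via $a^{-q_\gamma}$.

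The main obstacle is step (i) at the interface between the conditioning and the corrections. The density $e^{d_\gamma(\gamma B+g)}$ depends on $\gamma$ and is random, and the Wick/angular corrections at fixed $N$ must be shown negligible uniformly. My first attempt would be to bound the ratio of the Wick-corrected finite-mode mass to $F_{T,N}(X_\gamma)$ pointwise by $e^{\pm(C_N\gamma^2+\gamma^2T)}$. The angular factor $(2\sin\theta)^{-\gamma^2/2}$ would be absorbed by splitting off a collar of width $\delta$ near the angular boundary. The collar is treated via the positive-moment bound and Markov, and on its complement the factor is within $\delta^{-\gamma^2/2}\to1$ of $1$.
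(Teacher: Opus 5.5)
Your proposal is correct and follows the paper's proof essentially step for step. Both use the same box-and-mode approximating maps and the same cutoff order: first the horizontal tail via Fernique and Proposition~\ref{mom:whole-strip}, then the Fourier cutoff via Lemma~\ref{mom:cutoff-error}, followed by the same sublevel estimates from Lemma~\ref{var:cell-exponential} and the extended contraction principle. The one difference is the fixed-cutoff Wick and boundary correction: the paper avoids your collar split by noting that $(2\sin\theta)^{-\gamma^2/2}e^{-\gamma^2G_{\perp,N}((0,\theta),(0,\theta))/2}\to1$ in $L^1(0,\pi)$ by dominated convergence and bounding $|P_Nv|$ uniformly on an exponentially tight compact set, which treats the whole box at once.
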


\begin{proof}
\emph{Step 1: continuous approximating maps.}
Let $D_T=[-T,T]\times(0,\pi)$ and let $P_N$ project onto the first
$N$ nonconstant cosine modes. Define
\begin{equation}\label{ldp:finite-map}
 f_{T,N}^g(u,v)=\int_{D_T}e^{d_0(u+g;t)+P_Nv(t,\theta)}\,dt\,d\theta,
 \qquad G_{T,N}^g(u,v)=(u,v,f_{T,N}^g(u,v)).
\end{equation}
For fixed $T,N$, this map is continuous on $\mathcal E$:
the running maximum is locally Lipschitz and the finitely many
coefficients of $P_Nv$ converge uniformly on $[-T,T]$.
The ordinary contraction principle therefore applies to
$G_{T,N}^g(X_\gamma)$.

Write $A_{\gamma,T}^g$ for the restriction of $A_\gamma^g$ to $D_T$,
and $A_{\gamma,T,N}^g$ for its Fourier-cutoff version. To distinguish
this last random variable from $f_{T,N}^g(X_\gamma)$, put
\[
 c_{\gamma,N}(\theta)=(2\sin\theta)^{-\gamma^2/2}
             e^{-\gamma^2G_{\perp,N}((0,\theta),(0,\theta))/2},
 \qquad
 f_{\gamma,T,N}^g(u,v)=
 \int_{D_T}e^{d_\gamma(u+g;t)+P_Nv(t,\theta)}
              c_{\gamma,N}(\theta)\,dt\,d\theta.
\]
Thus $A_{\gamma,T,N}^g=f_{\gamma,T,N}^g(X_\gamma)$, including
the boundary factor and Wick correction. There are three errors:
\begin{align}\label{ldp:three-errors}
 |A_\gamma^g-f_{T,N}^g(X_\gamma)|
 &\le (A_\gamma^g-A_{\gamma,T}^g)
       +|A_{\gamma,T}^g-A_{\gamma,T,N}^g|\notag\\
 &\quad+|f_{\gamma,T,N}^g(X_\gamma)-f_{T,N}^g(X_\gamma)|.
\end{align}

\emph{Step 2: the Fourier error at a fixed box.}
Conditionally on $B$, the density $e^{d_\gamma(\gamma B+g)}$ is
between zero and one and is independent of the lateral field.
Lemma~\ref{mom:cutoff-error} gives numbers
$\varepsilon_{N,T}\to0$ as $N\to\infty$, for each fixed $T$, such that
\[
 \|A_{\gamma,T}^g-A_{\gamma,T,N}^g\|_{L^{q_\gamma}(\mathbb P)}
 \le\varepsilon_{N,T},\qquad
 q_\gamma=1+\frac{W+2}{2\gamma^2}.
\]
For every $\eta>0$, Markov's inequality yields
\[
 \limsup_{\gamma\downarrow0}\gamma^2\log
 \P(|A_{\gamma,T}^g-A_{\gamma,T,N}^g|>\eta)
 \le\frac{W+2}{2}\log\frac{\varepsilon_{N,T}}{\eta}.
\]
The right side tends to $-\infty$ with $N$. The box is fixed
throughout this assertion; uniformity in $T$ is neither claimed nor
needed.

\emph{Step 3: the horizontal tail.}
On $\|\gamma B\|_*\le r$, the envelope
\eqref{ldp:tail-envelope}, the unit-cell bound in
Lemma~\ref{mom:high-moment}, and Minkowski's inequality give a
conditional $L^{q_\gamma}(\mathbb P)$ bound $\varepsilon_T(r,g)$ for
$A_\gamma^g-A_{\gamma,T}^g$. For every fixed $r,g$ this bound tends
to zero as $T\to\infty$, uniformly for sufficiently small $\gamma$:
it is bounded by a constant times the sum of the cell suprema of
the envelope outside $[-T,T]$. Fernique's bound
\eqref{ldp:fernique} and conditional Markov therefore give
\[
 \P(A_\gamma^g-A_{\gamma,T}^g>\eta)
 \le \left(\frac{\varepsilon_T(r,g)}{\eta}\right)^{q_\gamma}
        +C e^{-\epsilon r^2/\gamma^2}.
\]
Here $C,\epsilon>0$ are independent of $T,r,\gamma$. For nonnegative
$p_\gamma,q_\gamma'$, the inequality
$p_\gamma+q_\gamma'\le2\max(p_\gamma,q_\gamma')$ shows that
the logarithmic rate of their sum is at most the maximum of their
logarithmic upper rates. Consequently
\[
 \limsup_{\gamma\downarrow0}\gamma^2\log
 \P(A_\gamma^g-A_{\gamma,T}^g>\eta)
 \le\max\left\{
 \frac{W+2}{2}\log\frac{\varepsilon_T(r,g)}{\eta},
 -\epsilon r^2\right\}.
\]
First let $T\to\infty$ with $r$ fixed, then let $r\to\infty$.
This proves
\begin{equation}\label{ldp:tail-superexponential}
 \lim_{T\to\infty}\limsup_{\gamma\downarrow0}\gamma^2\log
 \P(A_\gamma^g-A_{\gamma,T}^g>\eta)=-\infty.
\end{equation}

\emph{Step 4: the correction at fixed $T,N$.}
The diagonal covariance is bounded for fixed $N$ and is nonnegative.
Thus $c_{\gamma,N}\to1$ in $L^1(0,\pi)$, by dominated convergence
with majorant $\max\{1,(2\sin\theta)^{-1/2}\}$ for $\gamma^2\le1$.
For a compact $K\subset\mathcal E$, let
$M_K=\sup_{(u,v)\in K,z\in D_T}|P_Nv(z)|<\infty$.
Since $d_\gamma,d_0\le0$ and
$|d_\gamma(u+g;t)-d_0(u+g;t)|\le\gamma^2|t|/2$, we have
\[
 \sup_K|f_{\gamma,T,N}^g-f_{T,N}^g|
 \le e^{M_K}\left(2T\|c_{\gamma,N}-1\|_{L^1(0,\pi)}
                       +\pi\gamma^2T^2/2\right)\longrightarrow0.
\]
Given $L>0$, Gaussian exponential tightness supplies such a compact
$K$ with complement of logarithmic upper rate at most $-L$.
For each fixed $\eta>0$, the correction exceeds $\eta$ only outside
$K$ once $\gamma$ is sufficiently small. Since $L$ is arbitrary,
\begin{equation}\label{ldp:finite-correction}
 \limsup_{\gamma\downarrow0}\gamma^2\log
 \P(|f_{\gamma,T,N}^g(X_\gamma)-f_{T,N}^g(X_\gamma)|>\eta)
 =-\infty.
\end{equation}
Both cutoffs remain fixed when taking this limit.

\emph{Step 5: deterministic approximation on energy sublevels.}
Let $K_L=\{I_G\le L\}$, which is compact by
Lemma~\ref{ldp:gaussian}. On $K_L$,
$|u(t)+g(t)|\le(2\sqrt L+\|g'\|_2)\sqrt{|t|}$, so
\[
 d_0(u+g;t)\le -W|t|+(2\sqrt L+\|g'\|_2)\sqrt{|t|}.
\]
The unit-cell exponential bound of Lemma~\ref{var:cell-exponential}
then gives a finite, uniformly summable envelope for the deterministic
area, hence uniformly vanishing horizontal tails. It also gives, on
each fixed $D_T$, uniform bounds on $\int e^{2v}$ and
$\int e^{2P_Nv}$, because projection does not increase the energy.
Moreover,
\[
 \|(1-P_N)v\|_{L^2(\Sstrip)}
 \le(N+1)^{-1}\|v_\theta\|_{L^2(\Sstrip)}.
\]
Using $e^{d_0}\le1$, $|e^x-e^y|\le|x-y|(e^x+e^y)$ and
Cauchy--Schwarz, we obtain
\[
 \sup_{K_L}\left|
 \int_{D_T}e^{d_0(u+g)+v}-f_{T,N}^g(u,v)\right|
 \le\frac{C_{L,T}}{N+1}\longrightarrow0.
\]
These two estimates prove finiteness of $A_0^g$ and permit uniform
approximation on $K_L$, first by choosing $T$, then $N$.
Positivity follows because the integrand is positive almost everywhere.

\emph{Step 6: one sequence for every error threshold.}
We specify the diagonal construction to avoid exchanging limits.
For each integer $j\ge1$ put $\delta_j=1/(3j)$. Choose $T_j\ge j$
so large that the logarithmic upper rate of
$\P(A_\gamma^g-A_{\gamma,T_j}^g>\delta_j)$ is at most $-4j$
and the deterministic horizontal tail is at most $1/(2j)$ on $K_j$.
Steps 3 and 5 ensure that both requirements hold for all sufficiently
large $T_j$. With this $T_j$ fixed, choose $N_j\ge j$ so large that
the logarithmic upper rate of
$\P(|A_{\gamma,T_j}^g-A_{\gamma,T_j,N_j}^g|>\delta_j)$
is at most $-4j$ and the deterministic Fourier error on $D_{T_j}$
is at most $1/(2j)$ on $K_j$. Steps 2 and 5 give these choices.
There is no subsequent enlargement of $T_j$.

Set $G_j=G_{T_j,N_j}^g$. For this fixed pair of cutoffs, Step 4
assigns rate $-\infty$ to the third error at threshold $\delta_j$.
The union bound in \eqref{ldp:three-errors} therefore gives
\begin{equation}\label{ldp:diagonal-error}
 \limsup_{\gamma\downarrow0}\gamma^2\log
 \P\left(|A_\gamma^g-f_{T_j,N_j}^g(X_\gamma)|>1/j\right)
 \le-4j.
\end{equation}
The factor three in the union bound disappears because
$\gamma^2\log3\to0$. Equip the product space with the sum of a
compatible metric on $\mathcal E$ and $1\wedge|a-a'|$ on the mass
coordinate. The first two coordinates of $Y_\gamma^g$ and
$G_j(X_\gamma)$ agree exactly. Given any fixed $\eta>0$, for all
large $j$ the event that their distance exceeds $\eta$ is contained
in the event in \eqref{ldp:diagonal-error}. Hence
\begin{equation}\label{ldp:exponentially-good}
 \lim_{j\to\infty}\limsup_{\gamma\downarrow0}\gamma^2\log
 \P\bigl(d(Y_\gamma^g,G_j(X_\gamma))>\eta\bigr)=-\infty.
\end{equation}
In particular, the sequence of cutoffs does not depend on $\eta$.
Each $j$ is fixed before $\gamma\downarrow0$; no estimate uniform in
all $j$ at a common positive $\gamma$ is required.

Our simultaneous deterministic choices also ensure
\[
 \sup_{K_j}|f_{T_j,N_j}^g-A_0^g|\le1/j.
\]
For every fixed $L$, this proves uniform convergence on $K_L$ as
$j\to\infty$. Since the approximating maps are continuous, $A_0^g$
is continuous on $K_L$.

\emph{Step 7: identification of the joint rate.}
The extended contraction principle for exponentially good continuous
approximations \cite[Theorem 4.2.23]{DZ} applies to the Gaussian LDP,
\eqref{ldp:exponentially-good}, and the uniform convergence on every
energy sublevel. Its rate is
\[
 \inf\{I_G(x): I_G(x)<\infty,\ (x,A_0^g(x))=(u,v,a)\}.
\]
The retained Gaussian coordinates force $x=(u,v)$, so this is
precisely \eqref{ldp:unshifted-joint-cost}. The limiting map need
only be specified on finite-energy inputs; if a map on all of
$\mathcal E$ is required, extend its mass coordinate by zero elsewhere.
This is measurable since the finite-energy set is a countable union
of compact sublevels on each of which $A_0^g$ is continuous.
The rate sublevel of height $L$ is exactly the continuous image
$\{(x,A_0^g(x)):x\in K_L\}$, and is therefore compact.

For completeness, exponential tightness of the mass also follows
directly. On $\|\gamma B\|_*\le r$, Proposition~\ref{mom:whole-strip}
bounds its conditional $L^{q_\gamma}(\mathbb P)$ norm by
$C_W(1+r+\|g\|_*)^4$. Thus
\[
 \P(A_\gamma^g>M)
 \le\left(\frac{C_W(1+r+\|g\|_*)^4}{M}\right)^{q_\gamma}
       +C e^{-\epsilon r^2/\gamma^2}.
\]
For any desired exponential cost choose $r$, then $M$, sufficiently
large. Combining this with Gaussian exponential tightness gives
exponential tightness of $Y_\gamma^g$.
\end{proof}

\subsection{The joint rate in radial-field coordinates}\label{ldp:radial-section}

We now specialize the unified theorem to $g=0$ and contract its
Brownian coordinate to the actual radial field. This gives an explicit
unshifted rate and displays the competition between Gaussian energy
and area reward that Section~\ref{rad:section} must optimize. There is
no new approximation of the GMC in this step.  Let $\mathcal R_W$ consist of
locally absolutely continuous functions $r:\R\to\R$ such that
\[
 r(0)=0,\qquad r\le0,\qquad
 r'+W\operatorname{sgn}\in L^2(\R),
\]
where $\operatorname{sgn}(t)$ is $1$ for $t>0$, $-1$ for $t<0$, and
$0$ at the origin.  Radial profiles carry the local uniform topology.

\begin{corollary}[Joint LDP before Girsanov]
\label{ldp:radial-joint}
Under the original law before area conditioning, the triple
\[
 (\gamma R_\gamma,\gamma X^\perp,Z_\gamma)
\]
satisfies a good LDP of speed $\gamma^{-2}$ on
$C_{\mathrm{loc}}(\R)\times E_s\times[0,\infty)$, with rate
\begin{equation}
 I_{\rm raw}(r,v,a)=
 \begin{cases}
 \displaystyle\frac14\int_\R|r'(t)+W\operatorname{sgn}(t)|^2\dd t
       +\frac1{4\pi}\int_{\Sstrip}|\nabla v|^2\dd^2 z,
 &\begin{gathered}r\in\mathcal R_W,\ v\in\mathcal H_\perp,\\
                    a=Z_0(r+v),\end{gathered}\\[2pt]
 +\infty,&\text{otherwise}.
 \end{cases}
 \label{ldp:raw-rate}
\end{equation}
\end{corollary}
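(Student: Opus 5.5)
The plan is to apply Lemma~\ref{ldp:joint} with $g=0$ and then contract through the map $\Psi(u,v,a)=(d_0(u),v,a)$. The first step is to move from $d_\gamma$ to $d_0$. By Lemma~\ref{setup:Pitman}, $(\gamma R_\gamma,\gamma X^\perp,Z_\gamma)$ has the law of $(d_\gamma(\gamma B),\gamma X^\perp,A_\gamma^0)$. We have $|d_\gamma(u;t)-d_0(u;t)|\le\gamma^2|t|/2$ uniformly in $u$, and the local uniform metric on $C_{\mathrm{loc}}(\R)$ is a weighted sum of sup norms on $[-n,n]$. Hence $d_\gamma(\gamma B)$ and $d_0(\gamma B)$ are at deterministic distance $o(1)$. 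They are therefore exponentially equivalent, and the triple has the same LDP as $\Psi(Y_\gamma^0)$.

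The second step is continuity and goodness. $\Psi$ is continuous from $\mathcal B\times E_s\times[0,\infty)$ to $C_{\mathrm{loc}}(\R)\times E_s\times[0,\infty)$, because $d_0$ is locally Lipschitz in the local uniform topology and the $\mathcal B$-norm dominates it. The contraction principle then gives a good LDP with rate $\inf\{I_0(u,v,a):\Psi(u,v,a)=(r,v,a)\}$. Goodness is inherited from Lemma~\ref{ldp:joint}.

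The third step, which I expect to be the main obstacle, is identifying this infimum with \eqref{ldp:raw-rate}. I need to show that $d_0$ is injective on finite-energy paths, and that for $r=d_0(u)$
\[
 \frac14\int|u'|^2=\frac14\int|r'+W\operatorname{sgn}|^2 .
\]
I also need its image to be exactly $\mathcal R_W$. I work on $t\ge0$; the other half-line is symmetric. Put $w(t)=u(t)-Wt$ and $m(t)=\sup_{[0,t]}w$, so that $r=w-2m$.

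For the energy identity, $m$ is absolutely continuous and increases only on $\{w=m\}$, where $w'=m'$ almost everywhere. Off that set $m'=0$. Hence $r'=w'-2m'$ equals $-w'$ on the contact set and $w'$ elsewhere. On the contact set $w'=m'\ge0$, and in both cases $|r'+W|=|w'+W|=|u'|$ almost everywhere. For injectivity, set $\tilde r(t)=r(t)+Wt$. I would then check the Skorokhod-type inversion $m(t)=-\inf_{[0,t]}(r)$ whenever $m\ge0$, $m(0)=0$, which recovers $w=r+2m$. The argument runs from $r=w-2m\ge -m$, with equality at contact times. For the image, given $r\in\mathcal R_W$, define $m=-\inf_{[0,t]}r$ and $u=r+2m+Wt$. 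Then $\sup_{[0,t]}(u-Ws)=m(t)$, and the same computation shows $u'\in L^2$. This requires checking $r\le0$, so that $m\ge0$ and $m(0)=0$. Finite $I_{\rm raw}$ therefore forces $r\in\mathcal R_W$, and conversely every $r\in\mathcal R_W$ is attained.

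Finally, on the graph the mass is $a=A_0^0(u,v)=Z_0(d_0(u)+v)=Z_0(r+v)$. Combined with injectivity, this means the infimum is attained at a single preimage and equals the displayed rate, with $+\infty$ otherwise.
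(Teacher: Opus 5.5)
Your Steps 1 and 2 (exponential equivalence of $d_\gamma(\gamma B)$ with $d_0(\gamma B)$, continuity of $(u,v,a)\mapsto(d_0(u),v,a)$, goodness by contraction) agree with the paper. Step 3, however, is wrong, even though the formula it reaches is correct. On the contact set you have $w'=m'\ge0$, so there $r'+W=W-w'=2W-u'$, which is not $\pm(w'+W)$. Hence, for $t>0$,
\[
 |u'|^2-|r'+W|^2=4Wm'\quad\text{a.e.},
\]
and this is strictly positive wherever $m'>0$. Integrating gives the paper's identity $\frac14\int|u'|^2=\frac14\int|r'+W\operatorname{sgn}|^2+W\sum_\sigma m_\sigma(\infty)$. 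So a preimage with positive record height costs strictly more than the radial cost.

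For the same reason $d_0$ is \emph{not} injective on finite-energy paths. On $t\ge0$, let $w$ have slope $1$ on $[0,\delta]$ and slope $-W$ afterwards. Then $w$ and $w-2m$ (whose running maximum is $0$) produce the same $r$, while their values of $\int_0^\infty|u'|^2$ differ by $4W\delta$. Your inversion $m(t)=-\inf_{[0,t]}r$ is a Skorokhod-reflection formula; it does not invert the Pitman-type map $w\mapsto w-2m$. Since $w\le m$, one has $r\le-m$, not $r\ge-m$. The formula already fails for $w(t)=-t$, where $m\equiv0$ but $-\inf_{[0,t]}r=t$. Pitman's inversion uses a \emph{future} infimum, which breaks down after the last record of a path with drift.

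Your preimage construction also fails. With $m=-\inf_{[0,t]}r$ and $u=r+2m+Wt$, the running maximum of $u(s)-Ws=r+2m$ can exceed $m$. For example, take $r(t)=-t$ on $[0,1]$, $r(t)=t-2$ on $[1,2]$, continued with slope $-W$; this lies in $\mathcal R_W$. Then $r+2m=t$ on $[0,2]$, so $d_0(u;2)=2-4=-2\ne r(2)=0$.

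The repair is the paper's argument, in two parts. First, the corrected pointwise identity shows that every finite-energy preimage costs at least the radial cost in \eqref{ldp:raw-rate}. Together with $|r'+W\operatorname{sgn}|\le|u'|$, local absolute continuity of running maxima, $r\le0$ and $r(0)=0$, it also shows that the image lies in $\mathcal R_W$. Second, for $r\in\mathcal R_W$ take the path $u=r+W|t|$. Both of its running maxima vanish precisely because $r\le0$ and $r(0)=0$, so $d_0(u)=r$ and the lower bound is attained. The contraction infimum is therefore attained at the unique preimage with zero record heights, not at a unique preimage overall.
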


\begin{proof}
Apply Lemma~\ref{ldp:joint} with $g=0$ and contract by the continuous map
$(u,v,a)\mapsto(d_0(u),v,a)$.  The deterministic local bound on
$d_\gamma-d_0$ makes the true reconstruction exponentially equivalent
to this one.  It remains to minimize the Brownian energy at a fixed
$r=d_0(u)$.

For $t\ge0$ put
$m_\sigma(t)=\sup_{0\le s\le t}\{u(\sigma s)-Ws\}$.
Finite energy gives $|u(\sigma t)|\le\sqrt t\,\|u'\|_2$, so each
$m_\sigma$ is bounded and eventually constant.  Running maxima of
locally absolutely continuous functions are locally absolutely
continuous: their increments over an interval are bounded by the
integral of the absolute derivative there.  Their derivative is zero
off the record set, and on that set it agrees almost everywhere with
the derivative of the underlying path.  Thus, where $m_\sigma'>0$,
$m_\sigma'=\frac{d}{dt}u(\sigma t)-W$.  Since
$r(\sigma t)+Wt=u(\sigma t)-2m_\sigma(t)$, this gives
\[
 \left|\frac{d}{dt}u(\sigma t)\right|^2
 -\left|\frac{d}{dt}r(\sigma t)+W\right|^2
 =4Wm_\sigma'(t)
\]
almost everywhere, including off the record set.  Consequently
\[
 \frac14\int_\R|u'|^2\dd t
 =\frac14\int_\R|r'+W\operatorname{sgn}|^2\dd t
      +W\sum_{\sigma=\pm}m_\sigma(\infty).
\]
Every finite-energy preimage therefore has at least the radial cost in
\eqref{ldp:raw-rate}, and its image belongs to $\mathcal R_W$.
Conversely, for $r\in\mathcal R_W$, take $u(t)=r(t)+W|t|$.
This is a finite-energy element of $\mathcal B$.  Since $r\le0$ and
$r(0)=0$, both running maxima vanish, $d_0(u)=r$, and the lower cost
is attained.  The lateral coordinate and mass graph are unchanged.
This proves the rate formula; goodness follows by contraction.
\end{proof}

The unique zero of the unweighted rate is
\[
 (r,v,a)=(-W|t|,0,2\pi/W).
\]
This is the typical triple before conditioning.  It is not the
optimization relevant to the unit-area expectation: that expectation
adds the reward $W\log a$.  In radial-field coordinates the problem
for the next section is to maximize
\[
 W\log Z_0(r+v)
 -\frac14\int_\R|r'+W\operatorname{sgn}|^2\dd t
 -\frac1{4\pi}\int_{\Sstrip}|\nabla v|^2\dd^2 z.
\]
The higher-moment estimates above will justify using this unbounded
reward.  They do not identify its maximizer; that is the variational
step after the Girsanov calculation.

\section{Girsanov and concentration under the unit-area law}
\label{rad:section}\label{proof:section}

The goal of this section is to evaluate the unit-area expectation by
showing that its exact positive weight concentrates on the classical
profile. Subsection~\ref{proof:shift} chooses a radial Cameron--Martin
shift from the stationarity equation and computes its exact Gaussian
density. Subsection~\ref{proof:reduction} inserts this shift into the
normalized expectation and applies the single joint LDP of
Section~\ref{decomp:section}; this identifies the reward minus energy
whose maximum controls the expectation. Subsection~\ref{var:section}
proves that the candidate is its unique maximizer and establishes the
compactness needed for a strict gap away from it.
Subsection~\ref{ldp:weighted-section} uses the high moments to pass from
the probability LDP to weighted Laplace bounds despite the unbounded
area reward. Subsection~\ref{proof:concentration} then controls the
normalizing denominator and proves concentration under the exact
tilted law. Finally, Subsection~\ref{proof:completion} transfers that
concentration to bounded continuous field observables and to the
unit-area measure. Thus the LDP is proved once, and Girsanov serves to
center and solve the ensuing variational problem.

\subsection{Choosing and implementing the radial shift}
\label{proof:shift}

We choose a finite-energy shift that places the candidate optimizer
at the origin of the Gaussian input, and compute the resulting density
exactly. The general Cameron--Martin identity fixes the signs and
variance factors; the radial stationarity equation then singles out
the hyperbolic-cosine profile. This calculation proposes the candidate,
whose global optimality will be proved in Subsection~\ref{var:section}.

\subsubsection{The exact Cameron--Martin identity}

We establish the change-of-variables identity for an arbitrary
finite-energy radial displacement, before choosing that displacement.
The independent lateral field is retained as part of the observable.

Let $g:\R\to\R$ be locally absolutely continuous, with $g(0)=0$
and $\int_\R|g'(t)|^2\dd t<\infty$.  For
$\sigma\in\{-1,1\}$ the stochastic integral
\[
 \int_0^\infty\frac{\dd}{\dd t}g(\sigma t)\,\dd B(\sigma t)
\]
is the $L^2$ limit of its integrals on bounded time intervals,
with respect to the Brownian motion $t\mapsto B(\sigma t)$.
In particular its integrand on the negative half-line is
$-g'(-t)$.

\Needspace{12\baselineskip}
\begin{lemma}[Exact Cameron--Martin identity]
\label{setup:CM}
For every nonnegative measurable functional $G$ of $(B,X^\perp)$,
\begin{align}
 \E G(B,X^\perp)
 ={}&\exp\!\left[-\frac1{4\gamma^2}
                         \int_\R|g'(t)|^2\dd t\right]\notag\\[-2pt]
 &\times\E\!\left[
 G(B+g/\gamma,X^\perp)
 \exp\!\left\{-\frac1{2\gamma}
       \sum_{\sigma\in\{-1,1\}}\int_0^\infty
              \frac{\dd}{\dd t}g(\sigma t)\,\dd B(\sigma t)\right\}
                 \right].
 \label{rad:generic-CM}
\end{align}
\end{lemma}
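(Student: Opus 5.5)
The plan is to prove \eqref{rad:generic-CM} by the classical Cameron--Martin--Girsanov theorem, applied separately on each Brownian half-line. The lateral field is carried along by independence and Fubini.

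\emph{Step 1: reduction to the radial input.} Since $X^\perp$ is independent of $B$ and is not shifted, condition on $X^\perp$. By Tonelli for nonnegative $G$, it suffices to prove \eqref{rad:generic-CM} for nonnegative measurable functionals of $B$ alone. The two halves $B_\sigma(t)=B(\sigma t)$, $t\ge0$, are independent. The shift $g/\gamma$ acts on them as the two independent displacements $t\mapsto g(\sigma t)/\gamma$, and the exponent in \eqref{rad:generic-CM} is a sum over $\sigma$ of terms depending only on $B_\sigma$. Hence it suffices to prove the one-sided identity on $C[0,\infty)$ and then take the product.

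\emph{Step 2: one-sided identity on a bounded horizon.} Let $\beta$ be a variance-two Brownian motion on $[0,\infty)$. Then $\beta/\sqrt2$ is standard, and a deterministic $h$ with $h(0)=0$ and $h'\in L^2$ is a Cameron--Martin shift with squared norm $\frac12\int|h'|^2$. The Girsanov density of the law of $\beta+h$ with respect to that of $\beta$ is
\[
 \exp\Bigl\{\tfrac12\int_0^\infty h'\,d\beta-\tfrac14\int_0^\infty|h'|^2\,dt\Bigr\}.
\]
Inverting, the density of the law of $\beta$ with respect to that of $\beta+h$ is
\[
 \exp\Bigl\{-\tfrac12\int h'\,dx+\tfrac14\int|h'|^2\Bigr\}
\]
evaluated at the path $x$. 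At $x=\beta+h$ this equals
\[
 \exp\Bigl\{-\tfrac12\int h'\,d\beta-\tfrac14\int|h'|^2\Bigr\},
\]
since $\int h'\,d(\beta+h)=\int h'\,d\beta+\int|h'|^2$. With $h(t)=g(\sigma t)/\gamma$ this gives exactly the Gaussian prefactor $e^{-\frac1{4\gamma^2}\int|g'|^2}$ and the stochastic exponent $-\frac1{2\gamma}\int_0^\infty\frac{d}{dt}g(\sigma t)\,dB(\sigma t)$. The two halves together give the full $\int_\R|g'|^2$.

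I would first carry out this step for $G$ measurable with respect to the path on $[0,T]$ and with $h'$ truncated to $[0,T]$. Girsanov with a deterministic integrand then holds exactly, with no Novikov issue: the exponential is a true martingale because $\int_0^T h'\,d\beta$ is Gaussian.

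\emph{Step 3: removing the horizon.} This is the only delicate point, though it is standard. The stochastic integral over $[0,T]$ converges in $L^2$ as $T\to\infty$ to the improper integral defined before the lemma, and it is Gaussian with variance $2\int_0^T|h'|^2$. The exponential martingale $\mathcal E_T$ is therefore bounded in $L^2(\P)$, so $\mathcal E_T\to\mathcal E_\infty$ in $L^1$. The identity then holds for bounded $G$ depending on finitely many times, by taking $T$ beyond those times, then for all bounded measurable $G$ on the path $\sigma$-field by a monotone class argument. Monotone convergence extends it to nonnegative $G$.

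Multiplying the two independent one-sided identities and reinserting $X^\perp$ via Step 1 proves \eqref{rad:generic-CM}.
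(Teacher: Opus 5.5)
Your proof is correct, but it reaches the identity by a different route than the paper. The paper treats the two-sided path as one Gaussian element. It identifies $\frac12\sum_\sigma\int_0^\infty\frac{d}{dt}g(\sigma t)\,dB(\sigma t)$ as the Cameron--Martin representative of $g$ through its covariance $g(t)$ with $B(t)$ and its variance $\frac12\int_\R|g'|^2$. It then proves the formula for compactly supported step-function $g'$ and bounded cylinder $G$ from the finite-dimensional Gaussian density, and approximates the \emph{shift} $g'$ in $L^2$ before extending to nonnegative $G$ by monotone convergence.

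You instead split off $X^\perp$ and the two half-lines by independence and Tonelli, and invoke Girsanov on a finite horizon with the full $L^2$ integrand. You then approximate in the \emph{horizon}, using that $\E\mathcal E_T^2=\exp\{\frac12\int_0^T|h'|^2\}$ is bounded, so your exponential satisfies $\mathcal E_T\to\mathcal E_\infty$ in $L^1$. Equivalently, $\E[\mathcal E_\infty\mid\mathcal F_T]=\mathcal E_T$ by independence of increments.

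Your route never passes to a limit in the shift itself, since Girsanov already handles a general $L^2$ integrand on each bounded interval. The paper's route needs only finite-dimensional Gaussian calculus, but it must control both the shifted functional and the density as $g_n'\to g'$. Its covariance identification of the representative is also the device it reuses immediately afterwards to rewrite the source as $\frac{W^2}{4}\int_\R B(t)\sech^2(Wt/2)\,dt$.

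One simplification to your Step 2: the inversion evaluates an It\^o integral at the shifted path $x=\beta+h$, which needs a word about defining $\int h'\,dx$ almost surely under the equivalent law. You can skip it entirely. Apply the forward formula to the shift $-h$ and the functional $x\mapsto G(x+h)$; its density $\exp\{-\frac12\int h'\,d\beta-\frac14\int|h'|^2\}$ is already the one in \eqref{rad:generic-CM}.
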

\begin{proof}
The centered Gaussian functional
$\frac12\sum_\sigma\int_0^\infty
\frac{\dd}{\dd t}g(\sigma t)\,\dd B(\sigma t)$ has covariance
$g(t)$ with $B(t)$ and variance
$\frac12\int_\R|g'(t)|^2\dd t$.
These assertions follow from independence of the two halves and
the It\^o isometry for variance-two Brownian motion.  They identify
the Cameron--Martin representative and its squared norm.
The reverse Gaussian change-of-variables formula for translation
by $g/\gamma$ is therefore \eqref{rad:generic-CM}, including its
negative linear term.  One can first take $g'$ to be a compactly
supported step function and $G$ a bounded cylinder functional;
the identity is then the ordinary finite-dimensional Gaussian
density formula.  Approximation in $L^2$ proves the general
Cameron--Martin identity, and monotone convergence extends it to
nonnegative measurable $G$.  The independent field $X^\perp$
is unchanged.
\end{proof}

\subsubsection{The stationary profile and its Gaussian source}

We seek a candidate for the area-weighted optimizer by restricting
the unshifted variational problem to profiles with no lateral
displacement. We explain this restriction first, then derive its
stationarity equation and use the resulting profile to choose the
Brownian shift. This calculation only selects a candidate: global
optimality among all radial and lateral profiles will be established
in Subsection~\ref{var:section}.

By Corollary~\ref{ldp:radial-joint}, a deterministic radial profile
$q\in\mathcal R_W$ with lateral displacement $v=0$ has area
\[
 Z_0(q)=\int_\R\int_0^\pi e^{q(t)}\,d\theta\,dt
       =\pi\int_\R e^{q(t)}\,dt
\]
and probability cost
$\frac14\int_\R|q'(t)+W\operatorname{sgn}(t)|^2\,dt$.
The area power in \eqref{intro:ratio} is
$p_\gamma=W/\gamma^2-1/2$, so its logarithmic reward at speed
$\gamma^{-2}$ is $W\log Z_0(q)$, since
$\gamma^2p_\gamma\to W$. Thus restricting reward minus the joint
rate to $(q,0,Z_0(q))$ gives
\[
 W\log\left(\pi\int_\R e^{q(t)}\dd t\right)
       -\frac14\int_\R
                    |q'(t)+W\operatorname{sgn}(t)|^2\dd t.
\]
The rate formula already minimizes over Brownian inputs producing
$q$ under reflection: $u(t)=q(t)+W|t|$ attains this cost, because
$q\le0$ and $q(0)=0$ make both running maxima vanish.
The passage from the probability LDP to an expectation with this
unbounded reward will be justified in
Subsection~\ref{ldp:weighted-section}.

To find a candidate, we look within the class of smooth even profiles
$q\le0$, with $q(0)=0$, $q'(0)=0$, limiting slope $-W$ on the
positive half-line, and $q'+W\operatorname{sgn}\in L^2(\R)$.
Here $\operatorname{sgn}(t)$ is $1$ for $t>0$, $-1$ for $t<0$,
and $0$ at zero. Symmetry is an ansatz at this stage, rather than
a conclusion about all optimizers. For a candidate strictly negative
away from zero, compactly supported variations on $(0,\infty)$
preserve the constraint for sufficiently small variation parameter.
Take a smooth variation $\eta$ supported in $(0,\infty)$ and extend
it evenly to the other half-line. One half of the first variation of
this expression is
\[
 \frac{W\int_0^\infty e^{q(t)}\eta(t)dt}{\int_\R e^{q(t)}dt}
 -\frac12\int_0^\infty(q'(t)+W)\eta'(t)dt.
\]
Integration by parts in the second integral, with zero boundary terms
because $\eta$ has compact support, gives the stationarity equation
\[
 q''(t)=-\frac{2W e^{q(t)}}{\int_\R e^{q(s)}\dd s},\qquad t>0.
\]
Multiplying by $2q'$ and using the limiting slope at infinity
gives the first integral
\[
 q'(t)^2=W^2-\frac{4W e^{q(t)}}{\int_\R e^{q(s)}\dd s}.
\]
The initial conditions imply $\int_\R e^q\dd t=4/W$.
The resulting initial-value problem has the unique solution
\begin{equation}
 q(t)=-2\log\cosh\frac{Wt}{2},
 \qquad
 \pi\int_\R\sech^2\frac{Wt}{2}\dd t=\frac{4\pi}{W}.
 \label{rad:profile-choice}
\end{equation}
Thus we choose the Brownian shift
$g_*(t)=W|t|-2\log\cosh(Wt/2)$.  This stationarity calculation neither
linearizes the running maximum nor asserts an exact cancellation
of the lateral first chaos.  Those random terms remain in the
exact expectation.

For $u\in\mathcal B$, define on the positive half-line
\begin{align}
 b_\gamma(u;t)
 ={}&-2\log\cosh\frac{Wt}{2}+u(t)+\frac{\gamma^2t}{2}\notag\\[-2pt]
 &-2\sup_{0\leq s\leq t}
       \left\{-2\log\cosh\frac{Ws}{2}+u(s)+\frac{\gamma^2s}{2}\right\},
 \qquad t\geq0.
 \label{setup:reflected}
\end{align}
For $t<0$, replace $t$ by $|t|$ and $u(s)$ by $u(-s)$ in this
formula.  The notation $b_0$ means the same map with $\gamma=0$.
The supremum contains both zero and the current value, so
$b_\gamma(u;t)\leq0$.  Since the supremum is nonnegative and
$-2\log\cosh(Wt/2)\leq\log4-W|t|$, for $\gamma^2\leq W$ we also have
\begin{equation}
 e^{b_\gamma(u;t)}\leq
 \min\left\{1,4\exp\left[-\frac W2|t|
                    +\|u\|_*(1+|t|)^{3/4}\right]\right\}.
 \label{rad:envelope}
\end{equation}
Thus these reflected profiles satisfy the envelope used in
Section~\ref{decomp:section}, with $r=\|u\|_*$.  The maxima are retained exactly;
no expansion of their location or height is involved.

The covariance and variance identities that determine the Gaussian
source for this shift are
\begin{align}
 \Cov\left(B(t),\frac{W^2}{4}\int_\R
                       B(s)\sech^2\frac{Ws}{2}\dd s\right)
 &=W|t|-2\log\cosh\frac{Wt}{2},
 \label{setup:representer}\\
 \Var\left(\frac{W^2}{4}\int_\R
                       B(t)\sech^2\frac{Wt}{2}\dd t\right)
 &=2W(2\log2-1).
 \label{setup:shift-variance}
\end{align}
For example, for $t\geq0$ the covariance in
\eqref{setup:representer} is
$\frac{W^2}{2}\int_0^\infty(t\wedge s)\sech^2(Ws/2)\dd s$.
Its derivative is $W(1-\tanh(Wt/2))$ and its value at zero is
zero, which proves the identity on this half-line; symmetry
handles the other half.  The squared Cameron--Martin norm is
\begin{align*}
 \frac12\int_\R
  \left|\frac{\dd}{\dd t}
          \left(W|t|-2\log\cosh\frac{Wt}{2}\right)\right|^2\dd t
 &=2W\int_0^\infty(1-\tanh x)^2\dd x\\
 &=2W\int_0^1\frac{1-y}{1+y}\dd y
 =2W(2\log2-1),
\end{align*}
where $y=\tanh x$.  This proves
\eqref{setup:shift-variance} as well.  The source integral exists
in $L^2$: its kernel decays exponentially and
$\|B(t)\|_{L^2}=\sqrt{2|t|}$.

Substituting this shift into Lemma~\ref{setup:CM} gives, for every
nonnegative measurable $G$,
\begin{align}
 \E G(B,X^\perp)
 ={}&e^{-W(2\log2-1)/\gamma^2}\notag\\[-2pt]
 &\times\E\!\left[
 G\left(B+\frac{W|\cdot|-2\log\cosh(W\cdot/2)}\gamma,X^\perp\right)
 e^{-\frac{W^2}{4\gamma}\int_\R
                      B(t)\sech^2(Wt/2)\dd t}\right].
 \label{setup:radial-CM}
\end{align}
Indeed, integration by parts on each half-line gives the exact
identity for its Gaussian score,
\[
 \frac12\sum_{\sigma\in\{-1,1\}}\int_0^\infty
       W\bigl(1-\tanh(Wt/2)\bigr)\,\dd B(\sigma t)
       =\frac{W^2}{4}\int_\R B(t)\sech^2(Wt/2)\dd t.
\]
The term at zero vanishes because $B(0)=0$, and the term at
infinity vanishes by the exponential decay of the derivative
of the shift.  Equivalently,
\eqref{setup:representer} identifies this Gaussian functional
directly.

\subsection{The exact weighted expectation and its variational problem}
\label{proof:reduction}

We now express the unit-area expectation under one explicitly defined
tilted law and identify its limiting reward minus Gaussian energy.
The shift $g_*$ is already covered by Lemma~\ref{ldp:joint}; applying
that theorem gives the joint field--area rate directly, with no second
LDP proof and no separate lateral change of measure. This reduces the
desired observable limit to concentration of a specified triple.
Recall $\Phi_\gamma=\gamma h_\gamma^{(1)}$ and $b_\gamma$ from
\eqref{setup:reflected}.

\begin{lemma}[Exact shifted expectation]
\label{setup:shifted-identity}
Fix $s>0$ and let
$\widehat E_s=C_{\mathrm{loc}}(\R;H_N^{-s}(0,\pi))$
with its local uniform topology, as in \eqref{ldp:spaces}.
For every bounded Borel measurable $F:\widehat E_s\to\R$
and all sufficiently small $\gamma$,
\begin{equation}
\label{eq:shifted-expectation}
 \E^{(1)}[F(\Phi_\gamma)]=
 \frac{\begin{aligned}
  \E\!\Bigg[&F\!\left(b_\gamma(\gamma B)+\gamma X^\perp
                    -\log Z_\gamma(b_\gamma(\gamma B))\right)\\[-2pt]
  &\times\left(\frac W{4\pi}Z_\gamma(b_\gamma(\gamma B))\right)^{W/\gamma^2-1/2}
  e^{-\frac{W^2}{4\gamma^2}\int_\R
                         \gamma B(t)\sech^2(Wt/2)\dd t}\Bigg]
 \end{aligned}}
 {\displaystyle\E\!\left[
  \left(\frac W{4\pi}Z_\gamma(b_\gamma(\gamma B))\right)^{W/\gamma^2-1/2}
  e^{-\frac{W^2}{4\gamma^2}\int_\R
                         \gamma B(t)\sech^2(Wt/2)\dd t}\right]}.
\end{equation}
The same identity holds for bounded Borel observables on
$H^{-s}_{\mathrm{loc}}(\Sstrip)$, by composing with the continuous
restriction map from $\widehat E_s$. No continuity or Lipschitz
assumption is needed for this exact identity. For the limit of
expectations in Subsection~\ref{proof:completion}, we take $F$
bounded and continuous; Lipschitz continuity is not required.
\end{lemma}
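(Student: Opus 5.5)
We start from the disintegration identity \eqref{setup:disintegration} and replace the radial field by its Brownian representation. By Lemma~\ref{setup:Pitman}, jointly with the independent lateral field, $\gamma R_\gamma\overset{\rm law}=d_\gamma(\gamma B)$. Hence $\gamma\widehat h=d_\gamma(\gamma B)+\gamma X^\perp$, and by \eqref{setup:area} and the conditioning remark after Lemma~\ref{reg:canonical}, $Z_\gamma=Z_\gamma(d_\gamma(\gamma B))$ almost surely on the common probability space. Both the numerator and the denominator of \eqref{setup:disintegration} are therefore expectations of nonnegative measurable functionals $G(B,X^\perp)$, or of bounded multiples of them. For the numerator we split $F=F^+-F^-$ and treat each part separately, so that only nonnegative functionals appear.

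Next we apply the shifted Cameron--Martin identity \eqref{setup:radial-CM} to each such $G$. Translating $B$ by $g_*/\gamma$ replaces $\gamma B$ by $\gamma B+g_*$ inside $d_\gamma$. The key pathwise identity to check is
\[
 d_\gamma(\gamma B+g_*)=b_\gamma(\gamma B).
\]
On $t\ge0$ we have
\[
 \gamma B(t)+g_*(t)-(W-\gamma^2/2)t
 =-2\log\cosh(Wt/2)+\gamma B(t)+\gamma^2t/2,
\]
and the same expression appears inside the supremum. This is exactly \eqref{setup:reflected}. On the negative half-line $g_*(-t)=g_*(t)$, so the computation is identical. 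The identity \eqref{setup:radial-CM} supplies the density $e^{-W(2\log2-1)/\gamma^2}e^{-K(\gamma B)/\gamma^2}$, where $K(\gamma B)/\gamma^2=\frac{W^2}{4\gamma^2}\int\gamma B\sech^2(Wt/2)\,dt$. The deterministic prefactor is common to numerator and denominator and cancels. We then multiply numerator and denominator by $(W/(4\pi))^{p_\gamma}$ to produce the displayed normalization. The observable argument becomes $b_\gamma(\gamma B)+\gamma X^\perp-\log Z_\gamma(b_\gamma(\gamma B))$, as claimed.

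The only nontrivial point is justifying the division, that is, showing $0<\E H_\gamma<\infty$; Tonelli handles everything else for nonnegative integrands. Positivity is immediate: by Lemma~\ref{mom:positive} the mass is almost surely strictly positive, and the Gaussian factor is positive. For finiteness, we condition on $B$ and use the envelope \eqref{rad:envelope} with $r=\|\gamma B\|_*$. Proposition~\ref{mom:whole-strip} then gives the conditional moment of order $p_\gamma<q_\gamma$:
\[
 \E\bigl[Z_\gamma(b_\gamma(\gamma B))^{p_\gamma}\mid B\bigr]
 \lesssim_W (1+\|\gamma B\|_*)^{4p_\gamma}.
\]
It remains to show that $\E\bigl[(1+\|\gamma B\|_*)^{4p_\gamma}e^{-K(\gamma B)/\gamma^2}\bigr]<\infty$ at fixed $\gamma$. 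This holds because $K(\gamma B)/\gamma^2$ is a Gaussian variable and Fernique \eqref{ldp:fernique} makes every polynomial moment of $\|B\|_*$ finite; Cauchy--Schwarz separates the two factors. Equivalently, and more simply, finiteness of $\E H_\gamma$ is equivalent through the same exact identity to finiteness of $\E[Z_\gamma^{p_\gamma}]$, which Proposition~\ref{mom:whole-strip} gives directly because $\|\gamma B\|_*$ has Gaussian tails.

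I expect this integrability step, together with the measurability bookkeeping, to be the main obstacle. That bookkeeping includes three things. First, identifying the canonical chaos of the shifted field with $Z_\gamma(\cdot)$ under the translated Brownian input. Second, noting that the Girsanov change acts only on $B$ while the chaos is constructed conditionally on $B$. Third, checking that $b_\gamma(\gamma B)+\gamma X^\perp-\log Z_\gamma$ is a Borel map into $\widehat E_s$. For the third, I would use continuity of $b_\gamma$ for local uniform convergence, the $E_s$-valued version of $X^\perp$ from Lemma~\ref{ldp:gaussian}, and measurability of the martingale limit mass. Finally, the extension to observables on $H^{-s}_{\rm loc}(\Sstrip)$ follows by composing with the continuous restriction map from $\widehat E_s$.
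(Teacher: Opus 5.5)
Your proposal is correct and follows essentially the same route as the paper: insert the Pitman representation, apply the Cameron--Martin identity with $g_*$, verify $d_\gamma(\gamma B+g_*)=b_\gamma(\gamma B)$ inside the running supremum, cancel the deterministic factor $e^{-W(2\log2-1)/\gamma^2}$ and rescale by $(W/(4\pi))^{p_\gamma}$, and justify the division by positivity of the mass together with a finiteness bound obtained by conditioning on $B$ in Proposition~\ref{mom:whole-strip} and using Gaussian integrability of $\|B\|_*$. The only difference is in how the Gaussian source is handled in that finiteness bound. You separate it by Cauchy--Schwarz, or alternatively transfer finiteness from $\E[Z_\gamma^{p_\gamma}]$ through the identity for nonnegative functionals. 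The paper instead bounds the source by a constant times $\|B\|_*/\gamma$ and invokes $\E e^{\epsilon\|B\|_*^2}<\infty$. This difference is inessential.
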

\begin{proof}
Insert Lemma~\ref{setup:Pitman} into the original unit-area
expectation \eqref{intro:ratio}.  Under the translation in
\eqref{setup:radial-CM}, the expression inside its running
supremum, multiplied by $\gamma$, becomes
\[
 \gamma B(s)+W s-2\log\cosh(Ws/2)-(W-\gamma^2/2)s
       =-2\log\cosh(Ws/2)+\gamma B(s)+\gamma^2s/2.
\]
Consequently the entire scaled reflected radial process becomes
$b_\gamma(\gamma B)$, exactly as defined in
\eqref{setup:reflected}.  The mass becomes
$Z_\gamma(b_\gamma(\gamma B))$, and the same substitution in
$F$ gives the displayed field argument, including the logarithm
of the shifted mass.  The Gaussian density contributes the
displayed source factor.  The deterministic factor
$e^{-W(2\log2-1)/\gamma^2}$ cancels between numerator and
denominator; multiplying both integrands by
$(W/(4\pi))^{W/\gamma^2-1/2}$ then gives
\eqref{eq:shifted-expectation}.

The changes of measure hold first for nonnegative observables.
To justify normalization and subtraction for bounded signed
$F$, Proposition~\ref{mom:whole-strip}, conditional on $B$, bounds
the required mass moment by a constant times
$(1+\gamma\|B\|_*)^{4W/\gamma^2-2}$ for each fixed sufficiently
small $\gamma$.  The absolute source is bounded by a constant
depending on $W$ times $\|B\|_*/\gamma$.
The Gaussian exponential integrability of the Brownian norm
therefore makes both expectations finite.  The denominator is
strictly positive by positivity of the mass.  The Gaussian path
space and its exponential integrability were verified in
Subsection~\ref{ldp:gaussian-section}.
\end{proof}

Define the probability law $\mathbb Q_\gamma$ by
the following fully specified Radon--Nikodym derivative:
\begin{equation}\label{ldp:tilted-law}
 \frac{d\mathbb Q_\gamma}{d\mathbb P}
 =\frac{\displaystyle
 \left(\frac W{4\pi}Z_\gamma(b_\gamma(\gamma B))\right)^{W/\gamma^2-1/2}
 e^{-\frac{W^2}{4\gamma^2}\int_\R\gamma B(t)\sech^2(Wt/2)\,dt}}
 {\displaystyle\E\left[
 \left(\frac W{4\pi}Z_\gamma(b_\gamma(\gamma B))\right)^{W/\gamma^2-1/2}
 e^{-\frac{W^2}{4\gamma^2}\int_\R\gamma B(t)\sech^2(Wt/2)\,dt}
 \right]}.
\end{equation}
The preceding proof gives a finite positive denominator. This law is
exactly the normalized weight in \eqref{eq:shifted-expectation}; it is
defined now so that every subsequent concentration statement refers
to a specified probability measure. Lemma~\ref{ldp:weight-moment}
will give the stronger bound needed at exponential scale.

The shift changes the deterministic radial map, but not the Gaussian
pair whose law is used on the right side of
\eqref{eq:shifted-expectation}.  With
$g_*(t)=W|t|-2\log\cosh(Wt/2)$, one has exactly
\[
 b_\gamma(u)=d_\gamma(u+g_*).
\]
To apply that theorem to the exact expectation, define the shifted
mass and the joint random variable on the original Gaussian
probability space $(\Omega,\mathbb P)$ by
\begin{equation}\label{ldp:triple}
 A_\gamma^*=Z_\gamma(b_\gamma(\gamma B)),\qquad
 Y_\gamma^*=(\gamma B,\gamma X^\perp,A_\gamma^*).
\end{equation}
Since $g_*\in\mathcal B$ and $g_*'\in L^2(\R)$,
Lemma~\ref{ldp:joint} with $g=g_*$ says that the laws of
$Y_\gamma^*$ under $\mathbb P$ satisfy a good LDP, with exponential
tightness, on $\mathcal B\times E_s\times[0,\infty)$.
Its rate under $\mathbb P$ is
\begin{equation}\label{ldp:joint-cost}
 I_*(u,v,a)= \begin{cases}
 \displaystyle\frac14\int_\R|u'|^2
 +\frac1{4\pi}\int_{\Sstrip}|\nabla v|^2,
 &\begin{gathered}u'\in L^2(\R),\ v\in\mathcal H_\perp,\\
                  a=Z_0(b_0(u)+v),\end{gathered}\\[2pt]
 +\infty,&\text{otherwise}.
 \end{cases}
\end{equation}
As in Section~\ref{decomp:section}, finite radial energy includes local
absolute continuity and $u(0)=0$.
This rate describes the centered Gaussian law $\mathbb P$, not the
tilted law $\mathbb Q_\gamma$. The latter weights each outcome by its
area and Gaussian source according to \eqref{ldp:tilted-law}; we now
identify the resulting variational functional.

From here on, every use of Lemma~\ref{ldp:joint} in the shifted
variables means its specialization to $g=g_*$. Define
\[
 K(u)=\frac{W^2}{4}\int_\R u(t)\sech^2(Wt/2)\,dt,\qquad
 \Psi_\gamma(u,a)=(W-\gamma^2/2)\log\frac{Wa}{4\pi}-K(u).
\]
The unnormalized density in \eqref{ldp:tilted-law} is
$\exp\{\Psi_\gamma(u,a)/\gamma^2\}$ evaluated at the triple
\eqref{ldp:triple}. Its logarithmic limit is
$\Psi(u,a)=W\log(Wa/(4\pi))-K(u)$ for $a>0$.
In particular, the random mass has not been discarded: it is the
third coordinate of $Y_\gamma^*$, and the logarithmic area reward is
evaluated at that coordinate. The joint LDP assigns finite cost only
on the graph $a=Z_0(b_0(u)+v)$. On this graph, subtracting the reward
from the probability cost gives the candidate functional
\begin{equation}\label{proof:objective}
 J(u,v)=I_G(u,v)+K(u)
          -W\log\left[\frac W{4\pi}Z_0(b_0(u)+v)\right].
\end{equation}
Thus $-J(u,v)=\Psi(u,a)-I_*(u,v,a)$ on the finite-rate graph,
which is exactly the expression in \eqref{intro:variational-expression}.
The appearance of $Z_0$ in \eqref{proof:objective} comes from this
graph constraint, not from substituting a limit for $Z_\gamma$ inside
the expectation. Section~\ref{decomp:section} has already justified
the joint LDP for the random mass, including its cutoff limits.
What remains is to justify applying that LDP with the unbounded
weight $\exp(\Psi_\gamma/\gamma^2)$. The high-moment estimate and
weighted upper and lower bounds in
Subsection~\ref{ldp:weighted-section} will do precisely this.
The next subsection first solves the deterministic problem by proving
$J\ge0$ with equality only at $(u,v)=(0,0)$.

The resulting concentration statement we need is
\begin{equation}\label{intro:triple-target}
 (\gamma B,\gamma X^\perp,Z_\gamma(b_\gamma(\gamma B)))
 \longrightarrow(0,0,4\pi/W)
 \quad\text{under }\mathbb Q_\gamma.
\end{equation}
Indeed, the exact field argument of the observable is
\begin{equation}\label{intro:F-argument}
 b_\gamma(\gamma B)+\gamma X^\perp
                   -\log Z_\gamma(b_\gamma(\gamma B)).
\end{equation}
At the limiting triple it equals
$-2\log\cosh(Wt/2)-\log(4\pi/W)=\phi_W$.
To justify this conclusion for expectations, we still need a strict
variational gap, control of the unbounded weight, and a nonvanishing
denominator at exponential scale. These are the tasks of the next
three subsections.

\subsection{The joint radial--lateral variational problem}
\label{var:section}

We prove that $J$ in \eqref{proof:objective} has unique minimum zero
at $(u,v)=(0,0)$, and that near-minimizers cannot escape to infinity
in Gaussian energy. Reflection first converts the running maximum
into an explicit nonnegative cost; the sharp Onofri inequality then
identifies equality. A separate mass bound gives coercivity and
compactness, which turn uniqueness into a strict gap away from the
candidate. That gap will force concentration under the area weight.

Throughout this subsection $u$ is locally absolutely continuous on $\R$,
$u(0)=0$, and $\int_\R|u'|^2<\infty$; the lateral function
$v\in\mathcal H_\perp$ has zero angular mean.  In particular
\begin{equation}
\label{var:poincare}
 \int_{\Sstrip}|v|^2\dd^2 z\le\int_{\Sstrip}|v_\theta|^2\dd^2 z.
\end{equation}
To express the reflection in $b_0$, define, for $t\ge0$,
\begin{equation}
\label{var:reflection-definitions}
 m_\pm(t)=\sup_{0\le s\le t}
       \{-2\log\cosh(Ws/2)+u(\pm s)\},\qquad
 f(\pm t)=u(\pm t)-2m_\pm(t).
\end{equation}
Since $|u(t)|\le\sqrt{|t|}\,\|u'\|_2$, the expression inside each
supremum tends to $-\infty$.  Thus $m_\pm(\infty)$ is finite and
$m_\pm$ is eventually constant.  Also
\begin{equation}
\label{var:constraints}
 f(0)=0,\qquad
 b_0(u;t)=-2\log\cosh(Wt/2)+f(t)\le0.
\end{equation}
Their differentiation properties will be verified in the reflection proof.

\subsubsection{Reflection and uniqueness}

We account exactly for the energy and source terms introduced by the
running maxima. Applying Onofri to the reflected perturbation then
proves $J\ge0$ and identifies its unique zero, including the pinning
of the radial maximum.

\Needspace{16\baselineskip}
\begin{lemma}
\label{var:reflection}
The function $f$ in \eqref{var:reflection-definitions} has finite
Dirichlet energy, and
\begin{align}
 \frac14\int_\R|f'|^2\dd t
  +\frac{W^2}{4}\int_\R f(t)\sech^2(Wt/2)\dd t
 &=\frac14\int_\R|u'|^2\dd t
  +\frac{W^2}{4}\int_\R u(t)\sech^2(Wt/2)\dd t\notag\\
 &\quad-W\sum_{\sigma=\pm}m_\sigma(\infty).
 \label{var:reflection-formula}
\end{align}
Furthermore,
\begin{equation}
\label{var:energy-comparison}
 \int_\R|u'|^2\dd t
 \le\int_\R|f'|^2\dd t
       +4W\sum_{\sigma=\pm}m_\sigma(\infty).
\end{equation}
\end{lemma}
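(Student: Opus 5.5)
The plan is to argue on each half-line separately and then add the two contributions. Take $\sigma=+$, write $q(t)=-2\log\cosh(Wt/2)$, and set $m=m_+$, $f(t)=u(t)-2m(t)$ for $t\ge0$. The half-line $\sigma=-$ is identical after replacing $u(s)$ by $u(-s)$. The basic facts I will use are those already invoked in the proof of Corollary~\ref{ldp:radial-joint}:
\begin{itemize}
\item the running maximum $m$ of the locally absolutely continuous function $q+u$ is itself locally absolutely continuous and nondecreasing, so $m'\ge0$;
\item $m'=0$ almost everywhere off the record set $\{q+u=m\}$, and on that set $m'=q'+u'$ almost everywhere;
\item $m(0)=0$ and $m$ is eventually constant, since $q+u\to-\infty$ by $|u(t)|\le\sqrt t\,\|u'\|_2$.
\end{itemize}
Consequently $\int_0^\infty m'\,dt=m(\infty)<\infty$.

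\emph{Step 1: pointwise energy identity.} Expand
\[
|f'|^2=|u'|^2-4u'm'+4m'^2.
\]
Off the record set the last two terms vanish. On it, substitute $u'=m'-q'$ to get $-4u'm'+4m'^2=4q'm'$. Hence, almost everywhere,
\[
|u'|^2-|f'|^2=-4q'm'.
\]
Since $|q'|=W\tanh(Wt/2)\le W$ and $m'\ge0$, this gives $|f'|^2\le|u'|^2+4Wm'$, which is integrable. So $f$ has finite Dirichlet energy. Integrating the same identity and using $-q'\le W$ gives
\[
\int_0^\infty|u'|^2\,dt\le\int_0^\infty|f'|^2\,dt+4W\,m(\infty),
\]
and summing over $\sigma$ yields \eqref{var:energy-comparison}.

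\emph{Step 2: source identity.} Use $q''=-(W^2/2)\sech^2(Wt/2)$, so that $\frac{W^2}{4}\sech^2(Wt/2)=-q''/2$. Since $u-f=2m$, this gives
\[
\frac{W^2}{4}\int_0^\infty(u-f)\sech^2(Wt/2)\,dt=-\int_0^\infty m\,q''\,dt.
\]
All source integrals converge absolutely because $|u|,|f|\lesssim\sqrt t$ and $\sech^2$ decays exponentially. I then integrate by parts. The boundary term is
\[
-[m\,q']_0^\infty=W\,m(\infty),
\]
using $q'(0)=0$, $q'(\infty)=-W$, and the facts that $m$ is bounded and eventually constant. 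This leaves
\[
-\int_0^\infty m\,q''\,dt=W\,m(\infty)+\int_0^\infty m'q'\,dt.
\]
The integration by parts is justified on $[0,T]$ by absolute continuity of $m$, followed by $T\to\infty$.

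\emph{Step 3: combine.} Step~1 gives $\frac14\int_0^\infty(|u'|^2-|f'|^2)\,dt=-\int_0^\infty q'm'\,dt$. Adding this to Step~2, the cross terms $\pm\int q'm'$ cancel exactly, leaving
\[
\Bigl[\tfrac14\!\int_0^\infty\!|u'|^2+\tfrac{W^2}{4}\!\int_0^\infty\! u\sech^2\Bigr]-\Bigl[\tfrac14\!\int_0^\infty\!|f'|^2+\tfrac{W^2}{4}\!\int_0^\infty\! f\sech^2\Bigr]=W\,m(\infty).
\]
Summing over both half-lines gives \eqref{var:reflection-formula}.

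I expect the main technical point to be the almost-everywhere description of $m'$ on and off the record set, including the record set's accumulation points. I would take this exactly as in Corollary~\ref{ldp:radial-joint}: a derivative of an absolutely continuous function vanishes almost everywhere on a level set of its difference with another, applied here to $m-(q+u)\ge0$. The rest is bookkeeping of the boundary terms.
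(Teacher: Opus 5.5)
Your proof is correct and follows essentially the same route as the paper. You use the same half-line reduction, the same a.e. identity $|f'|^2-|u'|^2=4q'm'=-4W\tanh(Wt/2)\,m'$, and the same boundary evaluation; the paper simply phrases your integration by parts against $q''$ as integrating the total derivative $\frac{d}{dt}\{-W\tanh(Wt/2)\,m(t)\}$, and it obtains square integrability of $f'$ from $0\le m'\le|u'|$ rather than from the identity (which, since $q'\le0$, in fact gives $|f'|^2\le|u'|^2$ directly).
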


\begin{proof}
Work on either half-line, differentiating with respect to its
nonnegative coordinate.  For a locally absolutely continuous function $q$, its running
maximum $m(t)=\sup_{0\le s\le t}q(s)$ is locally absolutely
continuous: for $a<b$, $0\le m(b)-m(a)\le\int_a^b|q'(s)|ds$.
The latter inequality also applies to disjoint intervals and proves
absolute continuity. On the open set $q<m$, $m$ is locally constant.
On the set $q=m$, their derivatives agree almost everywhere, since
the derivative of an absolutely continuous function vanishes almost
everywhere on each of its level sets, applied to $q-m$. Thus whenever
$m_\sigma'>0$, outside a null set,
\[
 m_\sigma'(t)=-W\tanh(Wt/2)
                  +\frac{\dd}{\dd t}u(\sigma t)>0.
\]
Thus $0\le m_\sigma'\le|\frac{\dd}{\dd t}u(\sigma t)|$, which proves
square integrability of the reflected derivative.  On and off the
record set the same algebra gives
\[
 \left|\frac{\dd}{\dd t}f(\sigma t)\right|^2
 -\left|\frac{\dd}{\dd t}u(\sigma t)\right|^2
 =-4W\tanh(Wt/2)m_\sigma'(t).
\]
The difference of the source terms on this half-line is
$-\frac{W^2}{2}\int_0^\infty m_\sigma(t)\sech^2(Wt/2)\dd t$.
Adding it to one quarter of the preceding identity and integrating
gives
\[
 \int_0^\infty\frac{\dd}{\dd t}
       \{-W\tanh(Wt/2)m_\sigma(t)\}\dd t
 =-Wm_\sigma(\infty).
\]
All terms are integrable: $m_\sigma$ is bounded and eventually constant,
and $\sech^2(Wt/2)$ decays exponentially.  Summing proves
\eqref{var:reflection-formula}.  The derivative identity and
$0\le\tanh(Wt/2)\le1$ prove \eqref{var:energy-comparison}.
\end{proof}

\begin{lemma}
\label{var:selection}
For $0<W<2$, the deterministic mass $Z_0(b_0(u)+v)$ is positive and
finite, and
\begin{align}
 &\frac14\int_\R|u'|^2\dd t
   +\frac1{4\pi}\int_{\Sstrip}|\nabla v|^2\dd^2 z
   +\frac{W^2}{4}\int_\R u(t)\sech^2(Wt/2)\dd t
   -W\log\left(\frac{W}{4\pi}Z_0(b_0(u)+v)\right)
 \notag\\
 &\hspace{8mm}\ge
 W\sum_{\sigma=\pm}m_\sigma(\infty)
   +\frac{4-W^2}{16\pi}\int_{\Sstrip}|v_\theta|^2\dd^2 z
 \ge0.
 \label{var:selection-formula}
\end{align}
The left-hand side vanishes if and only if
$u=0$ and $v=0$.
\end{lemma}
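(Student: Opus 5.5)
The plan is to apply Lemma~\ref{var:reflection} to remove the running maxima, and then the strip Onofri inequality, Lemma~\ref{var:onofri}, to the perturbation $g=f+v$ of $\phi_W$.

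Write $q_W(t)=-2\log\cosh(Wt/2)$. By \eqref{var:constraints}, $b_0(u)+v=q_W+f+v$. Since $e^{\phi_W}=\frac{W}{4\pi}e^{q_W}$, we get
\[
 \frac W{4\pi}Z_0(b_0(u)+v)=\int_{\Sstrip}e^{g+\phi_W}\dd^2 z,\qquad g=f+v.
\]

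\emph{Checking the hypotheses of Lemma~\ref{var:onofri} for $g$.}
\begin{itemize}
\item Lemma~\ref{var:reflection} gives $f'\in L^2$.
\item $v\in\mathcal H_\perp\subset H^1$, so $g$ is $H^1$ on bounded strips and has finite Dirichlet energy.
\item $|f(t)|\lesssim 1+\sqrt{|t|}$, because $|u|\le\sqrt{|t|}\|u'\|_2$ and $m_\pm$ is bounded. Together with $v\in L^2$, this gives $\int g^2e^{\phi_W}<\infty$.
\end{itemize}
Lemma~\ref{var:onofri} then gives finiteness of the mass. Positivity holds because the integrand is positive almost everywhere.

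\emph{Simplifying the Onofri inequality.} Since $v$ has zero angular mean and $e^{\phi_W}$ depends only on $t$, we have $\int ve^{\phi_W}=0$. Also $W\int fe^{\phi_W}\dd^2z=\frac{W^2}{4}\int_\R f\sech^2(Wt/2)\dd t$, which is exactly the source term, so that coefficient matches. The cross term in $|g_t|^2$ vanishes: $\int f'(t)v_t\,d\theta=\partial_t\int v\,d\theta=0$. Since $f_\theta=0$, Onofri yields
\[
 W\log\Bigl[\tfrac W{4\pi}Z_0\Bigr]\le \frac{W^2}{4}\int f\sech^2+\frac14\int_\R|f'|^2+\frac1{4\pi}\int|v_t|^2+\frac{W^2}{16\pi}\int|v_\theta|^2 .
\]
Here the factor $\frac{1}{4\pi}\int_{\Sstrip}|f'|^2\dd^2z=\frac{\pi}{4\pi}\int_\R|f'|^2$ gives the coefficient $\frac14$.

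\emph{Assembling the inequality.} Substitute \eqref{var:reflection-formula} to replace the $f$-terms by the $u$-terms minus $W\sum m_\sigma(\infty)$. Then write $\frac1{4\pi}\int|\nabla v|^2-\frac1{4\pi}\int|v_t|^2-\frac{W^2}{16\pi}\int|v_\theta|^2=\frac{4-W^2}{16\pi}\int|v_\theta|^2$. This gives \eqref{var:selection-formula}. Both terms on the right are nonnegative: $m_\sigma(\infty)\ge m_\sigma(0)=0$, and $W<2$.

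\emph{Equality case.} If the left side vanishes, then $\int|v_\theta|^2=0$, and \eqref{var:poincare} forces $v=0$. Next $m_\pm(\infty)=0$, so $m_\pm\equiv0$ by monotonicity. Hence $f=u$ and $u(\pm s)\le-q_W(s)$, that is, $u(t)\le 2\log\cosh(Wt/2)$. The Onofri inequality must be an equality for the $\theta$-independent $g=f=u$. By \eqref{var:onofri-equality}, $u=c-2\log(1+\kappa\tanh(Wt/2))$, and $u(0)=0$ forces $c=0$.

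The main obstacle is excluding $\kappa\ne0$. Such $u$ satisfies $u(0)=0$, but we need to check that it violates the pinning constraint: $b_0(u)=q_W+u$ must have running maximum zero at the origin on both sides. Compute $q_W+u=-2\log(\cosh(Wt/2)+\kappa\sinh(Wt/2))$. Its derivative at $0$ is $-W\kappa\neq0$, so on one side $q_W+u>0$ near the origin. This makes $m_+$ or $m_-$ positive, contradicting $m_\pm\equiv0$. Hence $\kappa=0$ and $u=0$.

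Conversely, at $(0,0)$ we have $b_0(0)=q_W$. Then $Z_0=4\pi/W$, so the log term vanishes, and all remaining terms are zero.
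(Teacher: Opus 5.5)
Your proposal is correct and follows the paper's proof essentially step for step: Onofri applied to $g=f+v$, the zero-angular-mean identities for the source and the $t$-energy, the reflection identity to restore the $u$-terms, and in the equality case the radial Onofri classification, with $c=0$ from $f(0)=0$ and $\kappa=0$ from the derivative $-W\kappa$ at the origin. The only cosmetic difference is that you exclude $\kappa\neq0$ using $m_\pm\equiv0$, while the paper uses the general constraint $b_0(u)=-2\log\cosh(Wt/2)+f\le0$ with value zero at the origin; the two observations are equivalent.
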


\begin{proof}
Apply Lemma~\ref{var:onofri} to $g=f+v$.  Its hypotheses hold since
$|f(t)|\le\sqrt{|t|}\,\|f'\|_2$, the reference density decays
exponentially, and \eqref{var:poincare} controls $v$ in $L^2$.
This also proves finiteness of the mass; positivity is immediate.
Zero angular mean gives
\[
 W\int_{\Sstrip}(f+v)e^{\phi_W}\dd^2 z
  =\frac{W^2}{4}\int_\R f(t)\sech^2(Wt/2)\dd t,
 \qquad
 \int_{\Sstrip}|f'+v_t|^2\dd^2 z
   =\pi\int_\R|f'|^2\dd t+\int_{\Sstrip}|v_t|^2\dd^2 z.
\]
The second identity follows by differentiating the angular mean in the
weak sense; its mixed term is integrable by Cauchy--Schwarz.  Substituting
these identities into \eqref{var:onofri-formula} and then using
\eqref{var:reflection-formula} proves \eqref{var:selection-formula}.

If the first expression is zero, $W<2$ forces $v_\theta=0$, hence
$v=0$ by its angular mean.  Both running maxima also vanish.  Equality
then holds in radial Onofri, so $f$ has form
\eqref{var:onofri-equality}.  The condition $f(0)=0$ gives $c=0$.
The smooth function $-2\log\cosh(Wt/2)+f(t)$ has maximum zero at the
origin by \eqref{var:constraints}.  Its derivative there is $-W\kappa$,
so $\kappa=0$.  Thus $f=u=0$.  Conversely the zero pair gives
$Z_0(b_0(0))=4\pi/W$ and equality directly.
\end{proof}

\subsubsection{Coercivity and compactness}

The last inequality controls the angular lateral energy.  We now combine
the cell estimate from Section~\ref{decomp:section} with the reflected
radial envelope to obtain coercivity of the full tilted rate.

\begin{lemma}\label{var:mass-bound-lemma}
For the reflected perturbation $f$ defined in \eqref{var:reflection-definitions},
\begin{equation}
\label{var:mass-bound}
 Z_0(b_0(u)+v)
 \lesssim_W\left(1+\int_\R|f'|^2\dd t\right)
       \exp\left\{\frac1{8\pi}
                         \int_{\Sstrip}|\nabla v|^2\dd^2 z\right\}.
\end{equation}
\end{lemma}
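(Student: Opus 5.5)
The plan is to split the mass into unit cells $[j,j+1]\times(0,\pi)$ and, on each cell, separate the radial factor from the lateral one. By \eqref{var:constraints}, $b_0(u;t)=-2\log\cosh(Wt/2)+f(t)$. On the cell,
\[
 \int_{[j,j+1]\times(0,\pi)}e^{b_0(u)+v}\,d^2z
 \le \sup_{t\in[j,j+1]}e^{b_0(u;t)}\int_{[j,j+1]\times(0,\pi)}e^{v}\,d^2z .
\]
Lemma~\ref{var:cell-exponential} with $q=1$ bounds the last integral by $2\pi\cosh^2(1)\exp\{\frac1{8\pi}\int|\nabla v|^2\}$. This is exactly the exponential factor in \eqref{var:mass-bound}. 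It remains to show
\[
 \sum_{j\in\mathbb Z}\sup_{[j,j+1]}e^{b_0(u;\cdot)}\lesssim_W 1+\int_\R|f'|^2\,dt .
\]

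For the radial sum, write $F=\|f'\|_{L^2(\R)}$. Since $f(0)=0$, we have $|f(t)|\le F\sqrt{|t|}$, and $-2\log\cosh(Wt/2)\le\log4-W|t|$. Hence
\[
 e^{b_0(u;t)}\le\min\{1,\,4e^{-W|t|+F\sqrt{|t|}}\},
\]
where the bound by $1$ comes from $b_0\le0$. I would split at $|t|=t_0:=\max\{1,4F^2/W^2\}$. For $|t|\ge t_0$ one has $F\sqrt{|t|}\le W|t|/2$, so the cell suprema there are bounded by $4e^{-W(|j|-1)/2}$. These sum to $O_W(1)$. For $|t|<t_0$, there are at most $2t_0+2\lesssim_W 1+F^2$ cells, and on each the supremum is at most $1$. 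Adding the two ranges gives the claimed bound $\lesssim_W(1+F^2)$, and multiplying by the uniform cell constant proves \eqref{var:mass-bound}.

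The only point needing care is that the linear dependence on $\int|f'|^2$ comes from counting the central cells, where the trivial bound $e^{b_0}\le1$ is used instead of the Gaussian-type envelope. The pointwise bound $|f(t)|\le F\sqrt{|t|}$ requires $f$ to be locally absolutely continuous with $f'\in L^2$. This is supplied by Lemma~\ref{var:reflection}, which shows that $f$ has finite Dirichlet energy with $|f'|\le|u'|$ almost everywhere. No other obstacle is expected.
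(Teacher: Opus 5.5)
Your proposal is correct and follows the paper's proof essentially step for step. Both arguments use the envelope $b_0(u;t)\le\min\{0,\log4-W|t|+\|f'\|_{L^2}\sqrt{|t|}\}$, split at $|t|\approx4W^{-2}\int_\R|f'|^2\,dt$ to get $\sum_j\sup_{[j,j+1]}e^{b_0(u;\cdot)}\lesssim_W1+\int_\R|f'|^2\,dt$, and then multiply by the $q=1$ case of Lemma~\ref{var:cell-exponential}; your remark that $|f'|\le|u'|$ almost everywhere is also right, as it follows from the derivative identity in the proof of Lemma~\ref{var:reflection}.
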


\begin{proof}
First, \eqref{var:constraints} and Cauchy--Schwarz give
\begin{equation}
\label{var:radial-envelope}
 b_0(u;t)\le\min\left\{0,-W|t|+2\log2+
              \sqrt{|t|}\left(\int_\R|f'|^2\dd t\right)^{1/2}\right\}.
\end{equation}
Outside $|t|\le4W^{-2}\int|f'|^2$, the square-root term is at most
$W|t|/2$.  Summing the unit-cell suprema therefore gives
\[
 \sum_{j\in\mathbb Z}\sup_{t\in[j,j+1]}e^{b_0(u;t)}
       \lesssim_W1+\int_\R|f'|^2\dd t.
\]
Multiply \eqref{var:cell-exponential-formula} with $q=1$ by these suprema and sum.
Tonelli's theorem gives \eqref{var:mass-bound}.
\end{proof}

\begin{lemma}
\label{var:coercivity}
For fixed $W\in(0,2)$,
\begin{align}
 &\frac14\int_\R|u'|^2\dd t
   +\frac1{4\pi}\int_{\Sstrip}|\nabla v|^2\dd^2 z
   +\frac{W^2}{4}\int_\R u(t)\sech^2(Wt/2)\dd t
   -W\log\left(\frac W{4\pi}Z_0(b_0(u)+v)\right)
 \notag\\
 &\hspace{8mm}\ge
 \frac18\int_\R|f'|^2\dd t
  +\frac{2-W}{8\pi}\int_{\Sstrip}|\nabla v|^2\dd^2 z
  +W\sum_{\sigma=\pm}m_\sigma(\infty)-O_W(1).
 \label{var:coercivity-formula}
\end{align}
Thus its bounded sublevels have bounded total Gaussian energy.
\end{lemma}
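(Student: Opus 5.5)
We will combine the reflection identity of Lemma~\ref{var:reflection} with the mass bound of Lemma~\ref{var:mass-bound-lemma}. Each half of the Gaussian energy is then spent separately. Write $E_f=\int_\R|f'|^2\dd t$ and $D_v=\int_{\Sstrip}|\nabla v|^2\dd^2z$, and let $\mathcal L$ denote the left-hand side of \eqref{var:coercivity-formula}.

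\emph{Step 1: pass to the reflected profile.} By \eqref{var:reflection-formula}, the radial energy plus source equals
\[
 \tfrac14E_f+\tfrac{W^2}4\int_\R f\sech^2(Wt/2)\dd t+W\sum_\sigma m_\sigma(\infty).
\]
Hence
\[
 \mathcal L=\tfrac14E_f+\tfrac{W^2}{4}\int f\sech^2(Wt/2)\dd t+\tfrac1{4\pi}D_v+W\sum_\sigma m_\sigma(\infty)-W\log\Bigl(\tfrac W{4\pi}Z_0(b_0(u)+v)\Bigr).
\]
The record term already appears with the correct coefficient.

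\emph{Step 2: bound the source and the logarithm.} Since $f(0)=0$, we have $|f(t)|\le\sqrt{|t|}E_f^{1/2}$. This gives
\[
 \Bigl|\tfrac{W^2}4\int f\sech^2\Bigr|\le C_WE_f^{1/2}\le\tfrac1{16}E_f+O_W(1).
\]
For the logarithm, \eqref{var:mass-bound} yields
\[
 W\log Z_0(b_0(u)+v)\le W\log(1+E_f)+\tfrac W{8\pi}D_v+O_W(1).
\]
Using $W\log(1+E_f)\le\tfrac1{16}E_f+O_W(1)$, we obtain
\[
 \mathcal L\ge\tfrac18E_f+\Bigl(\tfrac1{4\pi}-\tfrac W{8\pi}\Bigr)D_v+W\sum_\sigma m_\sigma(\infty)-O_W(1).
\]
This is exactly \eqref{var:coercivity-formula}, because $\tfrac1{4\pi}-\tfrac W{8\pi}=\tfrac{2-W}{8\pi}$.

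\emph{Step 3: bounded sublevels.} Suppose $\mathcal L\le L$. Then $E_f$, $D_v$ and $\sum_\sigma m_\sigma(\infty)$ are each $O_W(1+L)$, using $m_\sigma\ge0$ and $W<2$. Inequality \eqref{var:energy-comparison} then gives $\int|u'|^2\le E_f+4W\sum_\sigma m_\sigma(\infty)=O_W(1+L)$. Together these bound the total Gaussian energy $I_G(u,v)$.

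\emph{Main obstacle.} The only delicate point is the coefficient of $D_v$. The lateral factor in Lemma~\ref{var:mass-bound-lemma} is $\exp\{D_v/(8\pi)\}$, and it is multiplied by $W$. Positivity of $\tfrac1{4\pi}-\tfrac W{8\pi}$ therefore requires exactly $W<2$, and no slack can be borrowed from other terms. We must check that Lemma~\ref{var:mass-bound-lemma} is applied with $q=1$ in the cell estimate, so that no extra loss arises there. The radial terms involve only sublinear quantities ($E_f^{1/2}$ and $\log E_f$), so they are absorbed routinely.
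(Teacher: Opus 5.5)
Your proposal is correct and follows essentially the same route as the paper's proof. Both rewrite the radial terms through the reflection identity, absorb the source via Cauchy--Schwarz into $\tfrac1{16}\int_\R|f'|^2\dd t+O_W(1)$, and control the logarithm with Lemma~\ref{var:mass-bound-lemma} together with $W\log(1+x)\le x/16+O_W(1)$; the bound on the original radial energy then comes from \eqref{var:energy-comparison}.
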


\begin{proof}
Use \eqref{var:reflection-formula} to replace $u$ in the energy and
source by $f$, adding $W\sum_\sigma m_\sigma(\infty)$.
Equation \eqref{var:mass-bound} bounds the mass term from below by
\[
 -\frac W{8\pi}\int_{\Sstrip}|\nabla v|^2\dd^2 z
 -W\log\left(1+\int_\R|f'|^2\dd t\right)-O_W(1).
\]
Since $f(0)=0$, Cauchy--Schwarz bounds the absolute source by
\[
 \frac{W^2}{4}\|f'\|_2
          \int_\R\sqrt{|t|}\sech^2(Wt/2)dt.
\]
The displayed integral is finite. The elementary inequality
$ab\le a^2/16+4b^2$ bounds this term by
$\|f'\|_2^2/16+O_W(1)$.
Also $W\log(1+x)\le x/16+O_W(1)$ for $x\ge0$: the continuous
function $W\log(1+x)-x/16$ tends to $-\infty$ and therefore has
a finite maximum. Apply this with $x=\|f'\|_2^2$.
Subtracting these two bounds from the coefficient $1/4$ leaves
$1/8$, proving the displayed estimate.  Finally \eqref{var:energy-comparison} controls the original
radial energy as well.
\end{proof}

We use the radial weighted space from \eqref{setup:path-space} and the
lateral space $E_s$ from \eqref{setup:spaces}.

\begin{lemma}
\label{var:compactness}
On bounded Gaussian-energy sets, the map
$(u,v)\mapsto Z_0(b_0(u)+v)$ is continuous for convergent subsequences
in the radial weighted topology and $E_s$, $s>0$.
The expression on the left of \eqref{var:coercivity-formula} has compact
sublevels in these topologies.
\end{lemma}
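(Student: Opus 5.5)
The plan has two parts. First I show that the mass map is continuous on bounded Gaussian-energy sets. Then I show that the functional on the left of \eqref{var:coercivity-formula}, call it $J$, has compact sublevels, using lower semicontinuity together with the energy bound of Lemma~\ref{var:coercivity}.

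For continuity, fix $L$ and a sequence $(u_n,v_n)\to(u,v)$ in $\mathcal B\times E_s$ with $I_G(u_n,v_n)\le L$. By Lemma~\ref{ldp:gaussian} the sublevel $K_L$ is compact, and lower semicontinuity of the energies puts the limit in $K_L$. The reflection map $b_0$ is Lipschitz for local uniform convergence, so $b_0(u_n)\to b_0(u)$ locally uniformly.

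The bound $|u_n(t)|\le 2\sqrt{L|t|}$ and the argument of Lemma~\ref{var:mass-bound-lemma} give a uniform summable envelope, $\sup_{[j,j+1]}e^{b_0(u_n)}\lesssim e^{-W|j|/2}$ for large $|j|$. Combined with Lemma~\ref{var:cell-exponential} at $q=1$, this makes the horizontal tails of $Z_0(b_0(u_n)+v_n)$ uniformly small. It therefore suffices to prove convergence on a fixed box $D_T$.

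On $D_T$ I avoid using only the weak $E_s$ convergence of $v_n$. Instead I rely on the energy bound. Rellich compactness of $H^1(D_T)\hookrightarrow L^2(D_T)$ and uniqueness of limits in $E_s$ upgrade the convergence to $v_n\to v$ in $L^2(D_T)$. The uniform bound on $\int_{D_T}e^{2v_n}$ from Lemma~\ref{var:cell-exponential}, together with $|e^x-e^y|\le|x-y|(e^x+e^y)$ and Cauchy--Schwarz, then gives convergence of the box masses. Alternatively, this is exactly the uniform-on-$K_L$ continuity already recorded in Step~7 of Lemma~\ref{ldp:joint} with $g=g_*$, since $b_0(u)=d_0(u+g_*)$, and I may simply cite it.

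For compactness of sublevels, let $\{J\le c\}$ be given. Lemma~\ref{var:coercivity} together with \eqref{var:energy-comparison} bounds $\int|f'|^2$, $\int|\nabla v|^2$, $\sum_\sigma m_\sigma(\infty)$, and hence $\int|u'|^2$, by a constant $L(c)$. So the sublevel lies inside $K_{L(c)}$, which is compact. It remains to show the sublevel is closed, i.e.\ that $J$ is lower semicontinuous along sequences in $K_{L(c)}$.

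The terms of $J$ behave as follows along such sequences:
\begin{itemize}
\item The Gaussian energy $I_G$ is lower semicontinuous.
\item The source $K(u)$ is continuous in the weighted radial topology, because $\sech^2$ decays exponentially while $|u(t)|\le\|u\|_*(1+|t|)^{3/4}$.
\item The term $-W\log(\frac W{4\pi}Z_0)$ is continuous by the first part, with the mass staying positive.
\end{itemize}
Hence the sublevel is closed in a compact set, and therefore compact.

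The main obstacle is the uniform control of $e^{v_n}$ on boxes and tails. Continuity in the weak $E_s$ topology alone is insufficient, so the argument must keep using the energy bound through the Onofri cell estimate and Rellich compactness.
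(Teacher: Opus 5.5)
Your proposal is correct and follows essentially the paper's proof. Lemma~\ref{var:coercivity} with \eqref{var:energy-comparison} places each sublevel inside a compact sublevel of $I_G$, and lower semicontinuity of $I_G$, together with continuity of $K$ and of the strictly positive mass (Lemma~\ref{ldp:joint} with $g=g_*$, since $b_0(u)=d_0(u+g_*)$), makes the sublevel closed. Your direct continuity argument via Rellich compactness is a valid self-contained substitute for that citation; the paper's own version, in Steps~5--6 of Lemma~\ref{ldp:joint}, uses angular Fourier truncation instead, and the continuity is recorded at the end of Step~6 (and in the lemma's statement), not Step~7.
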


\begin{proof}
Lemma~\ref{ldp:joint} with $g=g_*$ proves continuity of
$Z_0(b_0(u)+v)$ on every Gaussian-energy sublevel. This mass is
strictly positive there, so its logarithm is continuous as well.
The source $K$ is continuous on $\mathcal B$, since
\[
 |K(u)-K(\widetilde u)|
 \le\frac{W^2}{4}\|u-\widetilde u\|_*
       \int_\R(1+|t|)^{3/4}\sech^2(Wt/2)\,dt.
\]
By Lemma~\ref{var:coercivity}, each sublevel of $J$ lies in a
fixed compact sublevel of $I_G$. On this latter set, lower
semicontinuity of $I_G$ and continuity of the mass and source make
$J$ lower semicontinuous. Its sublevel is therefore a closed subset
of a compact set, proving the claim.
\end{proof}

\begin{lemma}\label{var:strict-gap}
For every open neighborhood $U$ of $(0,0)$,
\begin{align}
 \inf_{(u,v)\notin U}\biggl\{
  &\frac14\int_\R|u'|^2\dd t
   +\frac1{4\pi}\int_{\Sstrip}|\nabla v|^2\dd^2 z
   +\frac{W^2}{4}\int_\R u(t)\sech^2(Wt/2)\dd t
 \notag\\[-2mm]
  &\hspace{20mm}-W\log\left(\frac W{4\pi}Z_0(b_0(u)+v)\right)
                      \biggr\}>0.
 \label{var:gap}
\end{align}
\end{lemma}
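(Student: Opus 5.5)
The plan is the standard compactness argument, with the quantity in braces being $J(u,v)$ from \eqref{proof:objective}. Argue by contradiction. Suppose the infimum over the complement of $U$ is zero; it is nonnegative by Lemma~\ref{var:selection}. Then there is a sequence $(u_n,v_n)\notin U$ of finite Gaussian energy with $J(u_n,v_n)\to0$. In particular $J(u_n,v_n)\le1$ for all large $n$, so the whole tail lies in the sublevel $\{J\le1\}$.

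The first step is to obtain compactness. By Lemma~\ref{var:coercivity}, every sublevel of $J$ has bounded total Gaussian energy. The sharper point is Lemma~\ref{var:compactness}: the sublevels of $J$ are compact in $\mathcal B\times E_s$. We therefore extract a subsequence converging in $\mathcal B\times E_s$ to some $(u,v)$ lying in $\{J\le1\}$, which in particular has finite energy.

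The second step is to pass to the limit. Lemma~\ref{var:compactness} also shows that $J$ is lower semicontinuous on this compact sublevel. This follows from three facts: $I_G$ is lower semicontinuous; the source $K$ is continuous on $\mathcal B$; and $\log Z_0(b_0(u)+v)$ is continuous on energy sublevels, because the mass is continuous and positive there by Lemma~\ref{ldp:joint} with $g=g_*$. Hence $J(u,v)\le\liminf J(u_n,v_n)=0$. Combined with $J\ge0$, this gives $J(u,v)=0$, and the equality case of Lemma~\ref{var:selection} forces $(u,v)=(0,0)$.

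The final step produces the contradiction. The complement of $U$ is closed in $\mathcal B\times E_s$, and the subsequence converges in that topology, so its limit $(0,0)$ lies outside $U$. This contradicts $(0,0)\in U$.

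The only point needing care is to use one topology throughout: the neighborhood, the compactness and the lower semicontinuity must all be taken in the product topology of $\mathcal B\times E_s$ used in Lemma~\ref{var:compactness}. I would state explicitly that $U$ is open in that topology. Otherwise all the analytic work is already contained in the coercivity and compactness lemmas.
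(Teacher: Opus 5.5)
Your proposal is correct and is essentially the paper's own argument: assume the infimum vanishes, take finite-energy pairs outside $U$ whose costs tend to zero, extract a convergent subsequence and pass to the limit using the compact sublevels and lower semicontinuity from Lemma~\ref{var:compactness}, identify the limit as $(0,0)$ by the equality case of Lemma~\ref{var:selection}, and contradict the closedness of the complement of $U$. Your remarks that the infimum is nonnegative a priori and that $U$ is open in $\mathcal B\times E_s$ only state explicitly what the paper leaves implicit.
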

\begin{proof}
If \eqref{var:gap} failed, Lemma~\ref{var:compactness} would give
a convergent subsequence of pairs outside $U$ whose costs tend to zero.
Lower semicontinuity would give a limit of cost zero.  Lemma~\ref{var:selection}
identifies that limit as $(0,0)$, contradicting closedness of the
complement of $U$.
\end{proof}

The strict gap is the deterministic input that will suppress every
closed set of fluctuations separated from zero in the normalized
expectation.  The moment and approximation estimates of
Section~\ref{decomp:section} allow us to apply it to the Gaussian pair
together with its total chaos mass.

\subsection{The weighted Laplace principle}
\label{ldp:weighted-section}

Our goal is to convert the joint probability LDP into upper and lower
bounds for the weighted expectations in \eqref{eq:shifted-expectation}.
First a moment of the full weight of some fixed order $q>1$ controls
its contribution outside compact sets. On compact sets, we remove
small masses and use the ordinary LDP with the continuous logarithmic
reward. This justifies optimizing $-J$ for both numerator and
denominator, despite the unbounded area power and Gaussian source.

\subsubsection{The unbounded weight in the exact expectation}

The weight in \eqref{eq:shifted-expectation} is not bounded. The
following estimate supplies precisely the integrability needed to
retain it in the LDP. Fix any real $q$ with
$1<q<(W+2)/(2W)$.

\begin{lemma}\label{ldp:weight-moment}
One has
\begin{align}\label{ldp:weight-moment-eq}
 \limsup_{\gamma\to0}\gamma^2\log\E\Bigg[
 &\left(\frac W{4\pi}Z_\gamma(b_\gamma(\gamma B))\right)^%
 {q(W/\gamma^2-1/2)}\notag\\[-2pt]
 &\quad\times\exp\left\{-\frac{qW^2}{4\gamma^2}
 \int_\R\gamma B(t)\sech^2(Wt/2)\,dt\right\}\Bigg]<\infty.
\end{align}
\end{lemma}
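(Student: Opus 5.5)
The plan is to condition on the Brownian input $B$ and bound the lateral moment first, then integrate the resulting radial functional using Gaussian integrability of $\|B\|_*$. Put $p=q(W/\gamma^2-1/2)$. Since $q<(W+2)/(2W)$, we have $\gamma^2p\to qW<(W+2)/2$. Hence for small $\gamma$, $p\le q_\gamma=1+(W+2)/(2\gamma^2)$, with a fixed relative margin. Conditionally on $B$, the radial density $e^{b_\gamma(\gamma B)}$ is independent of $X^\perp$ and satisfies the envelope \eqref{rad:envelope} with $r=\gamma\|B\|_*$. Proposition~\ref{mom:whole-strip}, applied conditionally and combined with Lyapunov's inequality, then gives
\[
 \E\bigl[Z_\gamma(b_\gamma(\gamma B))^{p}\,\big|\,B\bigr]
 \le \bigl(C_W(1+\gamma\|B\|_*)^4\bigr)^{p}.
\]
The constant $(W/4\pi)^p$ contributes only $e^{O(1)/\gamma^2}$.

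Next, I would bound the source using the continuity estimate in the proof of Lemma~\ref{var:compactness}. This gives $|K(\gamma B)|\le c_W\gamma\|B\|_*$, where $c_W=\frac{W^2}{4}\int(1+|t|)^{3/4}\sech^2(Wt/2)\,dt$. Writing $X=\gamma\|B\|_*$, the expectation in \eqref{ldp:weight-moment-eq} is therefore at most
\[
 e^{O_W(1)/\gamma^2}\,
 \E\exp\Bigl\{\frac{1}{\gamma^2}\bigl(4\gamma^2p\log(1+X)+qc_WX\bigr)\Bigr\}.
\]
Both terms in the exponent grow at most linearly in $X$, with coefficients bounded uniformly in small $\gamma$; in particular $4\gamma^2 p\log(1+X)\le 4\gamma^2p X$. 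It remains to show that $\limsup\gamma^2\log\E e^{\lambda X/\gamma^2}<\infty$ for fixed $\lambda$.

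For this last step I would use the tail bound from the proof of Lemma~\ref{ldp:gaussian}, namely $\P(\|\gamma B\|_*>r)\le C_\epsilon e^{-\epsilon r^2/\gamma^2}$. It gives
\[
 \E e^{\lambda X/\gamma^2}\le e^{\lambda R/\gamma^2}+\int_R^\infty \frac{\lambda}{\gamma^2}e^{(\lambda r-\epsilon r^2)/\gamma^2}C_\epsilon\,dr .
\]
Choosing $R=2\lambda/\epsilon$ makes the integral $O(1)$, so the logarithmic rate is at most $\lambda R$. This is a Varadhan-type bound for a linear functional under a Gaussian tail. Collecting the three factors proves \eqref{ldp:weight-moment-eq}.

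The main obstacle is the first step: the conditional moment order must stay within the range of Proposition~\ref{mom:whole-strip}, uniformly over random envelopes. This is exactly where the restriction $q<(W+2)/(2W)$ enters. The polynomial dependence $(1+r)^4$ of that proposition is what keeps the radial factor only exponential at speed $\gamma^{-2}$, rather than superexponential. One should also check the justification of conditioning, via independence of $B$ and $X^\perp$ together with Tonelli, and that rounding $p$ up to an integer is unnecessary, since Lyapunov handles real exponents.
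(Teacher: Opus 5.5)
Your proposal is correct and follows the paper's proof. You condition on $B$, apply Proposition~\ref{mom:whole-strip} with Lyapunov (legitimate since $q(W/\gamma^2-1/2)\le 1+(W+2)/(2\gamma^2)$ for small $\gamma$), and bound the source by $|K(u)|\lesssim_W\|u\|_*$, which gives a conditional exponent of order $O_{W,q}(1+\gamma\|B\|_*)/\gamma^2$; the only cosmetic difference is the final integration over $B$, where the paper absorbs the linear term into $\epsilon\|B\|_*^2$ by Young's inequality and invokes Fernique directly, while you integrate the equivalent Gaussian tail bound $\P(\|\gamma B\|_*>r)\le C_\epsilon e^{-\epsilon r^2/\gamma^2}$ by layer-cake.
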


\begin{proof}
For sufficiently small $\gamma$,
$0<q(W/\gamma^2-1/2)\le1+(W+2)/(2\gamma^2)$.
Condition on $\gamma B=u$ and apply Proposition~\ref{mom:whole-strip} and
Lyapunov's inequality. Since
\[
 \left|\int_\R u(t)\sech^2(Wt/2)\,dt\right|
 \le\|u\|_*\int_\R(1+|t|)^{3/4}\sech^2(Wt/2)\,dt,
\]
the logarithm of the conditional expectation in
\eqref{ldp:weight-moment-eq}, multiplied by $\gamma^2$, is bounded above
by
\[
 q(W-\gamma^2/2)\bigl(O_W(1)+4\log(1+\|u\|_*)\bigr)
 +\frac{qW^2}{4}\|u\|_*
          \int_\R(1+|t|)^{3/4}\sech^2(Wt/2)dt.
\]
It is therefore $O_{W,q}(1+\|u\|_*)$.
Choose $\epsilon>0$ satisfying \eqref{ldp:fernique}.
Young's inequality bounds this last expression by
$\epsilon\|u\|_*^2+O_{W,q,\epsilon}(1)$ for every $u$.
After substituting $u=\gamma B$, the required expectation is at most
\[
 \exp\{O_{W,q,\epsilon}(1)/\gamma^2\}
                  \E e^{\epsilon\|B\|_*^2}.
\]
The second factor is finite and independent of $\gamma$. Taking
logarithms and multiplying by $\gamma^2$ proves the claim.
\end{proof}

\subsubsection{Weighted upper and lower bounds}

We prove the closed-set upper bound and open-set lower bound for the
full weight. Exponential tightness and the preceding moment estimate
remove large excursions, while a separate estimate near zero area
handles the singular logarithm. The remaining compact-set argument
uses Lemma~\ref{ldp:joint} with $g=g_*$ directly.

\begin{lemma}\label{ldp:weighted-upper}
For every closed $\mathcal C\subset\mathcal B\times E_s\times[0,\infty)$,
\begin{align}
 \limsup_{\gamma\to0}\gamma^2\log\E\Bigg[
 &\ind_{\mathcal C}\bigl(\gamma B,\gamma X^\perp,
                  Z_\gamma(b_\gamma(\gamma B))\bigr)
 \left(\frac W{4\pi}Z_\gamma(b_\gamma(\gamma B))\right)^{W/\gamma^2-1/2}
 \notag\\[-2pt]
 &\qquad\times
 e^{-\frac{W^2}{4\gamma^2}\int_\R\gamma B(t)\sech^2(Wt/2)\,dt}
 \Bigg]\notag\\
 \le\sup_{\substack{(u,v,a)\in\mathcal C\\a=Z_0(b_0(u)+v)}}
 \Bigg\{&W\log\frac{Wa}{4\pi}
 -\frac{W^2}{4}\int_\R u(t)\sech^2(Wt/2)\,dt
 -\frac14\int_\R|u'|^2
 -\frac1{4\pi}\int_{\Sstrip}|\nabla v|^2\Bigg\}.
 \label{ldp:weighted-upper-eq}
\end{align}
The supremum is over finite-energy pairs. For open $\mathcal C$,
the corresponding statement holds with $\liminf$ and the opposite
inequality.
\end{lemma}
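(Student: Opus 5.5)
The argument is a Varadhan-type lemma for the joint LDP of Lemma~\ref{ldp:joint} with $g=g_*$. Two non-standard features must be handled: the weight $e^{\Psi_\gamma(u,a)/\gamma^2}$ is unbounded, and it is singular at $a=0$. Write $Y_\gamma^*$ for the triple \eqref{ldp:triple} and $\Lambda_\gamma=\exp\{\Psi_\gamma(\gamma B,A_\gamma^*)/\gamma^2\}$ for the weight. Denote the right side of \eqref{ldp:weighted-upper-eq} by $S(\mathcal C)=\sup_{\mathcal C}\{\Psi-I_*\}$.

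For the upper bound, fix $L>0$ and use exponential tightness to choose a compact $K_L$ with $\limsup\gamma^2\log\P(Y_\gamma^*\notin K_L)\le -L$. Then apply H\"older with the fixed exponent $q$ of Lemma~\ref{ldp:weight-moment}:
\[
 \E[\Lambda_\gamma\ind_{K_L^c}(Y_\gamma^*)]
 \le(\E\Lambda_\gamma^q)^{1/q}\,\P(Y_\gamma^*\notin K_L)^{1-1/q}.
\]
The logarithmic rate of this term is at most $C/q-L(1-1/q)$, which tends to $-\infty$ as $L\to\infty$. Next remove small areas with the set $\{a\le\delta\}$. On this event the area factor is at most $(W\delta/4\pi)^{W/\gamma^2-1/2}$, and the source factor is handled by Cauchy--Schwarz together with the Fernique bound. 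More precisely, the bound $|K(u)|\lesssim\|u\|_*$ and \eqref{ldp:fernique} give $\limsup\gamma^2\log\E e^{-2K(\gamma B)/\gamma^2}<\infty$. The contribution from $\{a\le\delta\}$ therefore has rate at most $W\log(W\delta/4\pi)+O(1)$, which tends to $-\infty$ as $\delta\downarrow0$. Here $\Psi_\gamma\le\Psi$ up to $\gamma^2\log^+$ corrections, and these are harmless on bounded $a$ because $K_L$ is compact.

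What remains is the compact set $\mathcal C\cap K_L\cap\{a\ge\delta\}$. On it, $\Psi_\gamma\to\Psi$ uniformly, since $\log a$ is bounded there and $K$ is continuous on $\mathcal B$ with $|K|$ bounded on compacts. Also, $\Psi$ is continuous there. I would then run the standard finite-cover argument. Around each point $x$ of this compact set, choose a closed ball on which $\Psi\le\Psi(x)+\eta$, and apply the LDP upper bound on that ball, which gives $\le -\inf I_*$. Lower semicontinuity of $I_*$ lets one shrink the balls so that $\inf_{\text{ball}}I_*\ge I_*(x)-\eta$, or $\ge 1/\eta$ when $I_*(x)=\infty$. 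A finite subcover then gives the bound $\max_x\{\Psi(x)-I_*(x)\}+2\eta\le S(\mathcal C)+2\eta$. The supremum is automatically restricted to the graph $a=Z_0(b_0(u)+v)$ by \eqref{ldp:joint-cost}. The maximum of the three rates is then handled as in Step~3 of Lemma~\ref{ldp:joint}. Finally let $\eta\to0$, then $\delta\to0$, then $L\to\infty$. One needs $S(\mathcal C)>-\infty$, or else the convention for an empty supremum, so that the vanishing error terms do not dominate.

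For the lower bound with $\mathcal C$ open, fix a finite-energy point $x=(u,v,a)\in\mathcal C$ on the graph, so that $a=A_0^*(u,v)>0$. Choose a small open ball $O\subset\mathcal C\cap\{a>a/2\}$ around $x$ on which $\Psi\ge\Psi(x)-\eta$. Since $O$ has bounded $\|u\|_*$ and $a$ bounded below, $\Psi_\gamma\ge\Psi-\eta$ on $O$ for small $\gamma$. The LDP lower bound on $O$ then gives rate $\ge\Psi(x)-\eta-I_*(x)$, and taking the supremum over $x$ concludes.

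I expect the main obstacle to be the uniform control of the weight near the boundary of the compact pieces. Specifically, one must check that the source term $K$ and the factor $(W-\gamma^2/2)/W$ converge uniformly on $K_L\cap\{a\ge\delta\}$, and that the small-area piece really is negligible uniformly. This last point uses the Gaussian source estimate rather than the mass moment.
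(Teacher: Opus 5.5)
Your proposal is correct and follows essentially the same route as the paper. It uses H\"older with the fixed-order weight moment of Lemma~\ref{ldp:weight-moment} off an exponentially tight compact set, then removes small masses. On the compact remainder it runs a finite-cover argument, where $\gamma^2$ times the log-weight converges uniformly. The lower bound is local and uses that $\Psi_\gamma-\Psi=-(\gamma^2/2)\log(Wa/(4\pi))$ is uniformly small for $a$ in a bounded interval away from zero.

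The differences are cosmetic. You discard $\{a\le\delta\}$ globally via the Gaussian exponential moment of the source $K(\gamma B)/\gamma^2$, whereas the paper does so inside the compact set, where $K$ is deterministically bounded. You also shrink balls using lower semicontinuity of the rate, while the paper bounds the oscillation of $\Psi$ on each piece of the cover.
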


\begin{proof}
By exponential tightness in Lemma~\ref{ldp:joint}, for each $T>0$
there is a compact set outside which the triple has logarithmic
probability at most $-T$. Holder's inequality with exponents
$q,q/(q-1)$ and Lemma~\ref{ldp:weight-moment} bound the weighted
contribution of that complement by logarithmic rate
\[
 \frac1q\limsup_{\gamma\to0}\gamma^2\log
 \E\left[\left(\frac W{4\pi}Z_\gamma(b_\gamma(\gamma B))\right)^%
 {q(W/\gamma^2-1/2)}
 e^{-\frac{qW^2}{4\gamma^2}\int_\R\gamma B(t)\sech^2(Wt/2)\,dt}
 \right]-\left(1-\frac1q\right)T.
\]
This tends to $-\infty$ as $T\to\infty$.

On a fixed such compact set, the radial norms are bounded.
For $0<a\le\epsilon<1$ and sufficiently small $\gamma$,
\begin{align*}
 &(W-\gamma^2/2)\log\frac{Wa}{4\pi}
 -\frac{W^2}{4}\int_\R u(t)\sech^2(Wt/2)\,dt\\
 &\hspace{30mm}\le\frac W2\log\epsilon+O(1),
\end{align*}
where the bound is uniform on that compact set. This removes
$a$ close to zero by letting $\epsilon\downarrow0$.
On the remaining compact set $a\ge\epsilon$, the logarithm of the
weight multiplied by $\gamma^2$ converges uniformly to
\[
 W\log\frac{Wa}{4\pi}
 -\frac{W^2}{4}\int_\R u(t)\sech^2(Wt/2)\,dt.
\]
Cover the compact set by finitely many neighborhoods on which this
continuous expression oscillates by at most any prescribed positive
amount. Bound the weight on each neighborhood by its supremum and
apply the LDP upper bound to the closed intersections with the
compact set and $\mathcal C$. The logarithmic upper bound for this
finite sum is the largest of the corresponding bounds. Letting the
oscillation tend to zero, then $\epsilon\downarrow0$, and finally
$T\to\infty$ proves \eqref{ldp:weighted-upper-eq}.
For the lower bound, fix a finite-rate point $(u,v,a)$ in the open
set $\mathcal C$. Here $a>0$. Given $\eta>0$, continuity gives an
open neighborhood $V\subset\mathcal C$ on which the limiting
logarithmic weight differs from its value at $(u,v,a)$ by less than
$\eta$, and the mass coordinate lies in $(a/2,2a)$.
The difference between the exact logarithmic weight and its limit is
$-(\gamma^2/2)\log(Wa'/(4\pi))$ at mass $a'$, which tends to zero
uniformly on this interval. Restrict the expectation to $V$ and
bound the weight below by the exponential of its value at the
chosen point minus $2\eta$, divided by $\gamma^2$.
The probability LDP lower bound for $V$ is at least the negative
Gaussian energy of $(u,v)$, because $V$ contains that point.
Let $\eta\downarrow0$ and take the supremum over finite-rate points
in $\mathcal C$. This proves the stated open-set bound.
\end{proof}

\subsection{Normalization and concentration under the tilted law}
\label{proof:concentration}

We now show that the exact tilted law concentrates exponentially at
$(0,0,4\pi/W)$. The weighted Laplace bounds and the unique optimizer
first give logarithmic rate zero for the denominator. Applying the
same bounds off any neighborhood of the optimizer, using the strict
variational gap, then gives an exponentially small normalized
probability. Keeping the deterministic Girsanov factors also recovers
the original partition function's leading free energy.

\begin{lemma}\label{ldp:normalizer}
The denominator in \eqref{eq:shifted-expectation} satisfies
\begin{equation}\label{ldp:normalizer-eq}
 \lim_{\gamma\to0}\gamma^2\log\E\left[
 \left(\frac W{4\pi}Z_\gamma(b_\gamma(\gamma B))\right)^{W/\gamma^2-1/2}
 e^{-\frac{W^2}{4\gamma^2}\int_\R\gamma B(t)\sech^2(Wt/2)\,dt}
 \right]=0.
\end{equation}
\end{lemma}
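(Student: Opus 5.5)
The plan is to apply the weighted Laplace bounds of Lemma~\ref{ldp:weighted-upper} to the whole state space and then identify the resulting variational value through Lemma~\ref{var:selection}.

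For the upper bound, take $\mathcal C=\mathcal B\times E_s\times[0,\infty)$, which is closed. The right side of \eqref{ldp:weighted-upper-eq} is then the supremum of $-J(u,v)$ over finite-energy pairs, with $J$ as in \eqref{proof:objective}. Indeed, on the graph $a=Z_0(b_0(u)+v)$ the bracket is exactly $W\log(Wa/(4\pi))-K(u)-I_G(u,v)=-J(u,v)$. Lemma~\ref{var:selection} gives $J\ge0$ everywhere, so this supremum is at most $0$. Hence the $\limsup$ of $\gamma^2\log$ of the denominator is $\le0$.

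For the lower bound, take the same $\mathcal C$, which is also open. The open-set statement of Lemma~\ref{ldp:weighted-upper} gives a $\liminf$ of at least $\sup(-J)$. It suffices to exhibit one finite-rate point with $-J=0$. The natural choice is $(u,v)=(0,0)$ with $a=Z_0(b_0(0))$. Here $b_0(0;t)=-2\log\cosh(Wt/2)$, because the supremum in \eqref{setup:reflected} at $\gamma=0$, $u=0$ is attained at $s=0$ and equals $0$. Therefore $a=\pi\int\sech^2(Wt/2)\,dt=4\pi/W>0$. At this point $K(0)=0$, $I_G=0$, and $W\log(Wa/(4\pi))=0$, so $-J(0,0)=0$, as the converse part of Lemma~\ref{var:selection} records. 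Combining the two bounds yields \eqref{ldp:normalizer-eq}.

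The only point that needs care is the lower bound. The proof of Lemma~\ref{ldp:weighted-upper} requires the chosen finite-rate point to have $a>0$, so that the logarithmic weight is continuous in a neighborhood $V$ of it. That condition holds here since $a=4\pi/W$. The lemma's lower bound then applies without any extra small-area estimate. Everything else is already contained in the upper/lower Laplace bounds and the variational identification, so I do not expect a genuine obstacle.
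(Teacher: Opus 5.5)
Your proposal is correct and follows the paper's own proof. The upper bound is Lemma~\ref{ldp:weighted-upper} on the whole space combined with $J\ge0$ from Lemma~\ref{var:selection}. The lower bound comes from the point $(0,0,4\pi/W)$, where the limiting weight vanishes and the joint rate is zero. The only difference is that the paper writes out that neighborhood-restriction argument inline, whereas you cite the open-set part of Lemma~\ref{ldp:weighted-upper}, which is proved by exactly that argument.
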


\begin{proof}
The deterministic inequality and uniqueness statement in
Lemma~\ref{var:selection} make the supremum in
\eqref{ldp:weighted-upper-eq}, over the whole space, equal to zero.
It is attained at $(u,v,a)=(0,0,4\pi/W)$. This gives the upper bound.
In an open neighborhood of that point the logarithmic weight
multiplied by $\gamma^2$ is bounded below by an arbitrarily small
negative number, for all sufficiently small $\gamma$. The
probability of the neighborhood has lower logarithmic rate zero
by Lemma~\ref{ldp:joint}. Restricting the expectation to it gives
the lower bound.
\end{proof}

\begin{lemma}\label{ldp:concentration}
For every open neighborhood $U$ of $(0,0,4\pi/W)$ in
$\mathcal B\times E_s\times[0,\infty)$,
\begin{equation}\label{ldp:concentration-eq}
 \limsup_{\gamma\to0}\gamma^2\log\mathbb Q_\gamma
 \left(\bigl(\gamma B,\gamma X^\perp,
 Z_\gamma(b_\gamma(\gamma B))\bigr)\notin U\right)<0.
\end{equation}
\end{lemma}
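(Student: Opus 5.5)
The plan is to write the $\mathbb Q_\gamma$-probability as a ratio whose denominator is the normalizer of Lemma~\ref{ldp:normalizer}. The numerator is the same weighted expectation restricted to the closed set $\mathcal C=U^c$:
\[
 \mathbb Q_\gamma(Y_\gamma^*\notin U)
 =\frac{\E\bigl[\ind_{\mathcal C}(Y_\gamma^*)\,e^{\Psi_\gamma(\gamma B,A_\gamma^*)/\gamma^2}\bigr]}
        {\E\bigl[e^{\Psi_\gamma(\gamma B,A_\gamma^*)/\gamma^2}\bigr]}.
\]
Taking $\gamma^2\log$, the denominator contributes zero in the limit by Lemma~\ref{ldp:normalizer}. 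So it suffices to show that the $\limsup$ of $\gamma^2\log$ of the numerator is strictly negative. Lemma~\ref{ldp:weighted-upper}, applied to $\mathcal C$, bounds that $\limsup$ by
\[
 \sup\{-J(u,v):(u,v,Z_0(b_0(u)+v))\in\mathcal C,\ I_G(u,v)<\infty\}.
\]

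The second step is to show that this supremum is strictly negative. I will transfer the neighborhood $U$ of the triple to a neighborhood of $(0,0)$ in $\mathcal B\times E_s$, and then invoke the strict gap in Lemma~\ref{var:strict-gap}. Because $U$ is open in the product topology, it contains a basic box $U_1\times(4\pi/W-\delta,4\pi/W+\delta)$ with $U_1$ an open neighborhood of $(0,0)$.

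The main obstacle is that a point $(u,v)$ lying in $U_1$ but with mass outside the $\delta$-window need not be excluded by the gap lemma applied to $U_1$ alone. To handle this, I will use continuity of $A_0^{g_*}(u,v)=Z_0(b_0(u)+v)$ on Gaussian energy sublevels, from Lemma~\ref{ldp:joint}, together with the coercivity of Lemma~\ref{var:coercivity}.

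Concretely, suppose the supremum were $\ge0$. Then there would be finite-rate points $(u_k,v_k)$ with graph points outside $U$ and $J(u_k,v_k)\to0$. Coercivity keeps them in a fixed energy sublevel $K_L$. By Lemma~\ref{var:compactness}, a subsequence converges to some $(u,v)$ with $J(u,v)=0$ by lower semicontinuity, so $(u,v)=(0,0)$ by Lemma~\ref{var:selection}. Continuity of the mass on $K_L$ then forces $Z_0(b_0(u_k)+v_k)\to4\pi/W$. Hence the graph points converge to $(0,0,4\pi/W)\in U$, contradicting that they lie in the closed set $\mathcal C$. (Equivalently, one can run the proof of Lemma~\ref{var:strict-gap} directly on graph points.)

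Combining the two steps yields a strictly negative $\limsup$ for the numerator and rate zero for the denominator, which gives \eqref{ldp:concentration-eq}.
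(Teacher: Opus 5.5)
Your proposal is correct and follows essentially the same route as the paper: the paper also applies Lemma~\ref{ldp:weighted-upper} to the closed complement of $U$. It shows that the supremum is strictly negative by the same contradiction argument on graph triples, using Lemma~\ref{var:coercivity}, compactness, continuity of the mass on energy sublevels, and uniqueness from Lemma~\ref{var:selection}, and then subtracts the zero denominator rate from Lemma~\ref{ldp:normalizer}. The detour through the box $U_1\times(4\pi/W-\delta,4\pi/W+\delta)$ is not needed, since, as you note yourself, running the contradiction directly on the graph triples already handles the mass coordinate.
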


\begin{proof}
Apply Lemma~\ref{ldp:weighted-upper} to the closed complement of
$U$. Its supremum is strictly negative. Indeed, if a sequence of
finite-energy pairs in this complement made the expression tend to
zero, Lemma~\ref{var:coercivity} would bound their Gaussian
energies. Lemma~\ref{ldp:gaussian} and the continuity on energy
sublevels proved in Lemma~\ref{ldp:joint} would give a convergent
subsequence of the triples. Lower semicontinuity of the energies
and continuity of the logarithmic mass and source would make its
limit another optimizer. Uniqueness in Lemma~\ref{var:selection} forces the
limit to be $(0,0,4\pi/W)$, contradicting the closed constraint.
Subtracting the denominator rate \eqref{ldp:normalizer-eq} proves
\eqref{ldp:concentration-eq}.
\end{proof}

Keeping the deterministic factors instead of cancelling them
gives the exact partition-function identity
\begin{align}
 &\E\!\left[Z_\gamma(\gamma R_\gamma)^{W/\gamma^2-1/2}\right]
 =\left(\frac W{4\pi}\right)^{1/2}
     e^{\frac W{\gamma^2}(1+\log(\pi/W))}\notag\\[-2pt]
 &\qquad\times\E\!\left[
 \left(\frac W{4\pi}Z_\gamma(b_\gamma(\gamma B))\right)^{W/\gamma^2-1/2}
 e^{-\frac{W^2}{4\gamma^2}\int_\R
                    \gamma B(t)\sech^2(Wt/2)\dd t}\right].
 \label{setup:partition-identity}
\end{align}
The coefficient in its leading exponential is the original
mass reward minus the Gaussian translation cost:
\[
 W\log\frac{4\pi}{W}-W(2\log2-1)
                  =W\left(1+\log\frac\pi W\right).
\]
The prefactor $(W/(4\pi))^{1/2}$ comes from the $-1/2$ in the
power.  These factors cancel exactly, rather than asymptotically,
in \eqref{eq:shifted-expectation}.

Lemma~\ref{ldp:normalizer} proves that the expectation on the
right of \eqref{setup:partition-identity} has logarithm
$o(\gamma^{-2})$.  The partition identity then yields
\begin{equation}
 \lim_{\gamma\downarrow0}\gamma^2\log
 \E\!\left[Z_\gamma(\gamma R_\gamma)^{W/\gamma^2-1/2}\right]
       =W\left(1+\log\frac\pi W\right).
 \label{setup:free-energy}
\end{equation}

\subsection{The field and unit-area measure limits}\label{proof:completion}

The goal is to finish the semiclassical theorem from concentration
under the exact tilted law. For field observables, the reflected
profile and logarithmic mass are continuous at the limiting triple,
so a neighborhood estimate controls every bounded continuous $F$.
For the area measure, tested masses are adjoined by the same
approximation argument as in Lemma~\ref{ldp:joint}; convergence of
these tests and total mass one then give weak convergence without
escape of mass.

\subsubsection{Convergence of the field observables}

We transfer the triple concentration to the field argument in the
exact expectation. Continuity near the limiting triple and boundedness
of $F$ outside that neighborhood give convergence of its expectation.

\begin{proof}[Proof of the field assertion in Theorem~\ref{setup:main}]
\label{ldp:observable-limit}
We first treat bounded continuous functionals on $\widehat E_s$; the
stated $H^{-s}_{\mathrm{loc}}$ conclusion follows by restriction.

The exact ratio \eqref{eq:shifted-expectation} and definition
\eqref{ldp:tilted-law} give the equality
\begin{equation}\label{ldp:exact-observable}
 \E^{(1)}[F(\Phi_\gamma)]
 =\E_{\mathbb Q_\gamma}\left[
 F\left(b_\gamma(\gamma B)+\gamma X^\perp
       -\log Z_\gamma(b_\gamma(\gamma B))\right)\right].
\end{equation}
For $|t|\le T$, the running-supremum inequality gives
\[
 \sup_{|t|\le T}
 \left|b_\gamma(u;t)+2\log\cosh(Wt/2)\right|
 \le3\sup_{|t|\le T}|u(t)|+\frac32\gamma^2T.
\]
Thus the map inside $F$ in \eqref{ldp:exact-observable} approaches
$\phi_W=\log(W/(4\pi))-2\log\cosh(Wt/2)$ whenever
$(u,v,a)$ approaches $(0,0,4\pi/W)$, uniformly for sufficiently
small $\gamma$. Given $\epsilon>0$, continuity of $F$ supplies an
open neighborhood $U$ of that triple such that
\[
 \left|F(b_\gamma(u)+v-\log a)-F(\phi_W)\right|<\epsilon
 \quad\text{on }U
\]
for all sufficiently small $\gamma$. Split
\eqref{ldp:exact-observable} according to this neighborhood:
\begin{align}\label{ldp:F-error}
 \left|\E^{(1)}[F(\Phi_\gamma)]-F(\phi_W)\right|
 &\le\epsilon+2\|F\|_\infty\,
 \mathbb Q_\gamma\left(
 \bigl(\gamma B,\gamma X^\perp,Z_\gamma(b_\gamma(\gamma B))\bigr)
 \notin U\right).
\end{align}
Lemma~\ref{ldp:concentration} makes the second term tend to zero.
Since $\epsilon$ is arbitrary, this proves the expectation limit.

Finally, on each bounded time interval, convergence in
$C_tH_N^{-s}$ implies convergence in $L^2_tH_N^{-s}$. This embeds
continuously into the two-variable $H^{-s}$ space, because
$(1+\xi^2+n^2)^{-s}\le(1+n^2)^{-s}$ in the time-Fourier and
angular-cosine variables. Cutoffs and restriction to compact
subsets of $\Sstrip$ give a continuous map
$\widehat E_s\to H^{-s}_{\mathrm{loc}}(\Sstrip)$. Applying the
first assertion to the composition of $F$ with this map proves
the final assertion.
\end{proof}

\subsubsection{Convergence of the unit-area measure}

We prove weak convergence of the normalized area measure by testing
it against finitely many compactly supported functions, then using
total mass one to control the tails. Under $\mathbb Q_\gamma$,
the unit-area measure has the exact
representation
\begin{equation}\label{ldp:area-representation}
 \frac{e^{b_\gamma(\gamma B;t)}(2\sin\theta)^{-\gamma^2/2}
       :e^{\gamma X^\perp(t,\theta)}:\,dt\,d\theta}
      {Z_\gamma(b_\gamma(\gamma B))}.
\end{equation}
Its identification with the canonical circle-average measure was
proved in the regularization section.

\begin{proof}[Proof of the area assertion in Theorem~\ref{setup:main}]
\label{ldp:area-limit}
The limiting measure to be identified is
\begin{equation}\label{ldp:classical-area}
 \frac W{4\pi}\sech^2(Wt/2)\,dt\,d\theta.
\end{equation}

For finitely many $f\in C_c(\Sstrip)$, adjoin to the triple
\eqref{ldp:triple} the coordinates
\[
 \int_{\Sstrip}f(z)e^{b_\gamma(\gamma B;t)}
 (2\sin\theta)^{-\gamma^2/2}:e^{\gamma X^\perp(z)}:\,d^2z.
\]
The proof of Lemma~\ref{ldp:joint} applies to these coordinates:
the cutoff estimates apply to the positive and negative parts of
$f/\|f\|_\infty$; the finite maps are continuous; and uniform
integrability on bounded energy sets gives deterministic limit
$\int f(z)e^{b_0(u;t)+v(z)}\,d^2z$. The additional coordinates are
bounded in absolute value by $\|f\|_\infty$ times the total mass,
so exponential tightness is retained. The weighted estimates and
uniqueness argument therefore give joint convergence of these
coordinates to $\int f(z)\sech^2(Wt/2)\,d^2z$, while the denominator
in \eqref{ldp:area-representation} tends to $4\pi/W$.

To prevent escape of mass, choose $0\le f\le1$ in $C_c(\Sstrip)$
whose integral against \eqref{ldp:classical-area} exceeds
$1-\epsilon$. The measures in \eqref{ldp:area-representation} have
total mass one; their mass outside $\operatorname{supp}f$ is at
most one minus their integral of $f$, and is therefore less than
$2\epsilon$ with probability tending to one. This establishes
tightness away from both angular boundaries and both horizontal
ends. Here is the corresponding weak-convergence conclusion in explicit
test-function form. Let $g$ be any bounded continuous function on the
open strip, and let $f$ be the compactly supported cutoff just chosen.
The integral of $gf$ converges because $gf\in C_c(\Sstrip)$.
For each of the random and limiting probability measures, the absolute
integral of $g(1-f)$ is at most $\|g\|_\infty$ times its integral
of $1-f$. The latter is at most $2\epsilon$ with probability tending
to one for the random measure and at most $\epsilon$ for the limit.
Sending $\epsilon\downarrow0$ proves convergence of the integral
of $g$ in probability.

To express convergence in the weak topology itself, choose a sequence
$(f_j)_{j\ge1}$ of compactly supported continuous functions dense in
the unit ball of $C_0(\Sstrip)$ for the uniform norm, where $C_0$
denotes continuous functions vanishing at infinity. For probability
measures $\mu,\nu$, use the metric
\[
 d(\mu,\nu)=\sum_{j\ge1}2^{-j}
       \left(1\wedge\left|\int f_jd\mu-\int f_jd\nu\right|\right).
\]
Density shows that convergence in this metric is convergence against
all $C_0$ tests. The cutoff argument in the preceding paragraph,
using total mass one, then upgrades it to weak convergence.
A finite initial sum in this metric tends to zero in probability
by the joint finite-test convergence; the remaining sum is at most
$\sum_{j>J}2^{-j}=2^{-J}$. This proves convergence in probability
in the weak topology. The same test
argument applies to restrictions of continuous functions on
$[-\infty,+\infty]\times[0,\pi]$, and therefore proves weak
convergence on the compactification as well. The limiting density integrates to one and assigns
zero mass to all added boundary points.
\end{proof}

\appendix
\section{The field rate and the Liouville action}
\label{act:section}

The preceding concentration argument also gives a full large-deviation
principle for the normalized field.  Its rate is obtained by minimizing
over the unreflected Brownian input.  We carry out this minimization
explicitly and identify the result with a renormalized Liouville action.

For a function $\varphi$ that belongs to $H^1$ on every bounded full
strip, let
$\overline\varphi(t)=\pi^{-1}\int_0^\pi\varphi(t,\theta)\dd\theta$,
using its locally absolutely continuous representative.  Define
\begin{equation}
\label{act:domain}
 \mathcal D_W^{\mathrm{max}}=
 \left\{\begin{array}{l|l}
  \varphi&
  \varphi-\phi_W\in H^1((-T,T)\times(0,\pi))\text{ for every }T<\infty,\\
  &\displaystyle\int_{\Sstrip}|\nabla(\varphi-\phi_W)|^2\dd z<\infty,
   \quad Z_0(\varphi)=1,\\
  &\overline\varphi(t)\le\overline\varphi(0)\text{ for all }t\in\R
 \end{array}\right\}.
\end{equation}
The mean need only attain its maximum at zero; ties are allowed.  The
energy condition is homogeneous and does not require
$\varphi-\phi_W\in L^2(\Sstrip)$ or limits at the strip ends.

Define the candidate field rate by
\begin{equation}
\label{act:rate}
 \mathcal I(\varphi)=
 \begin{cases}
 \displaystyle\frac1{4\pi}\int_{\Sstrip}|\nabla(\varphi-\phi_W)|^2\dd z
       +W\int_{\Sstrip}(\varphi-\phi_W)e^{\phi_W}\dd z,
                         &\varphi\in\mathcal D_W^{\mathrm{max}},\\[1mm]
 +\infty,                &\text{otherwise}.
 \end{cases}
\end{equation}

\begin{theorem}[The field large-deviation principle]
\label{act:field-ldp}
For every $s>0$, the laws of $\Phi_\gamma$ satisfy a good LDP of speed
$\gamma^{-2}$ in $H^{-s}_{\mathrm{loc}}(\Sstrip)$ with rate
$\mathcal I$.  This rate is nonnegative and vanishes only at $\phi_W$.
\end{theorem}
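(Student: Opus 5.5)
The plan is to derive the field LDP from a weighted LDP for the triple $Y_\gamma^*=(\gamma B,\gamma X^\perp,A_\gamma^*)$ under $\mathbb Q_\gamma$, push it forward through the reconstruction map, and then compute the contracted rate explicitly.

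\emph{Step 1: the triple under $\mathbb Q_\gamma$.} Lemmas~\ref{ldp:weighted-upper} and~\ref{ldp:normalizer} already give the closed-set upper bound and open-set lower bound for $\mathbb Q_\gamma(Y_\gamma^*\in\cdot)$, with rate $J(u,v)$ on the graph $a=Z_0(b_0(u)+v)$ and $+\infty$ off it. Goodness comes from Lemma~\ref{var:compactness} together with continuity of the mass on energy sublevels. Exponential tightness under $\mathbb Q_\gamma$ follows from the same H\"older step with Lemma~\ref{ldp:weight-moment}. So the triple satisfies a good LDP under $\mathbb Q_\gamma$ with rate $J$.

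\emph{Step 2: contraction to the field.} The field is $\Phi_\gamma=b_\gamma(u)+v-\log a$, evaluated at the triple. This map is continuous on $\{a>0\}$ into $\widehat E_s$, and then into $H^{-s}_{\mathrm{loc}}$ by the embedding in the proof of the field assertion. It converges to its $\gamma=0$ version $\Psi_0(u,v,a)=b_0(u)+v-\log a$ uniformly on sets where $a$ is bounded below, since $|b_\gamma-b_0|\le\frac32\gamma^2T$ on $[-T,T]$. Small areas have $\mathbb Q_\gamma$-probability of arbitrarily negative rate: on $a\le\epsilon$ the weight is at most $\epsilon^{W/2}$ times bounded factors, as in Lemma~\ref{ldp:weighted-upper}. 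Hence the extended contraction principle \cite[Theorem 4.2.23]{DZ} applies and gives a good LDP with rate
\[
\mathcal I(\varphi)=\inf\{J(u,v):\ b_0(u)+v-\log Z_0(b_0(u)+v)=\varphi\}.
\]

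\emph{Step 3: identifying the infimum.} Write $\varphi=\phi_W+f+v$ with $\overline{\varphi-\phi_W}=f-\log(WZ_0/(4\pi))$. By construction $Z_0(\varphi)=1$, and $b_0(u)=-2\log\cosh(Wt/2)+f$ has maximum $0$ at $t=0$. The constraint $b_0\le0=b_0(0)$ is exactly the condition $\overline\varphi\le\overline\varphi(0)$, and the lateral part $v$ is $H^1$ with finite energy, so the image lies in $\mathcal D_W^{\max}$. Conversely, given $\varphi\in\mathcal D_W^{\max}$, the pair $(v,\overline\varphi)$ is recovered, and the constant is fixed by the normalization $\overline\varphi(0)=\log(W/4\pi)-\log(\cdot)$. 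The preimages $u$ are then exactly those with the prescribed $f=u-2m$.

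By Lemma~\ref{var:reflection}, the energy-plus-source of $u$ equals that of $f$ plus $W\sum_\sigma m_\sigma(\infty)\ge0$. The infimum is attained at $u=f$, i.e.\ zero records, which is admissible since $b_0\le0$ forces $m_\sigma\equiv0$ when $u=f$. Thus
\[
\mathcal I=\frac14\int|f'|^2+\frac{1}{4\pi}\int|\nabla v|^2+\frac{W^2}4\int f\,\sech^2-W\log\frac{WZ_0}{4\pi}.
\]
Set $c=-\log(WZ_0/4\pi)$, so that $\varphi-\phi_W=f+v+c$. Then $\frac1{4\pi}\int|\nabla(\varphi-\phi_W)|^2=\frac14\int|f'|^2+\frac1{4\pi}\int|\nabla v|^2$, using the orthogonality in Lemma~\ref{var:selection}. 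Also $W\int(\varphi-\phi_W)e^{\phi_W}=\frac{W^2}4\int f\sech^2+Wc$, since $\int e^{\phi_W}=1$ and $v$ has zero angular mean. Adding these reproduces \eqref{act:rate}.

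Nonnegativity with equality only at $\phi_W$ follows from Lemma~\ref{var:selection}, or directly from Lemma~\ref{var:onofri} applied to $g=\varphi-\phi_W$ using $Z_0(\varphi)=1$.

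\emph{Main obstacle.} I expect the hardest part to be Step 2: justifying the contraction despite the singular $\log a$ and a $\gamma$-dependent map. I would handle it by the small-area exclusion above and by exponential equivalence of $\Psi_\gamma(Y_\gamma^*)$ and $\Psi_0(Y_\gamma^*)$ on compacts. A second delicate point is checking that the contraction map is injective onto $\mathcal D_W^{\max}$ in $H^{-s}_{\mathrm{loc}}$, so that the infimum really reduces to the record-free $u$. I would address that through explicit recovery of $\overline\varphi$ and $v$ from $\varphi$ in the finite-energy class.
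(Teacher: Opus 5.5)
Your proposal is correct and follows essentially the paper's route. You obtain the tilted LDP for the triple from Lemmas~\ref{ldp:weighted-upper} and~\ref{ldp:normalizer}, then apply extended contraction through $b_0(u)+v-\log a$ after excluding small masses. The paper does the exclusion with the continuous maps $\log(a\vee1/n)$ and the fact that the mass is bounded below on tilted-rate sublevels, where you use the weight bound $\epsilon^{W/2}$ plus exponential tightness. The remaining steps also match: exponential equivalence of $b_\gamma$ and $b_0$, injectivity of the restriction on finite-rate fields, and Lemma~\ref{var:reflection} with the record-free preimage to identify $\mathcal I$.

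One small caveat: if you prove uniqueness of the zero ``directly'' from Onofri, you must also use the constraint $\overline\varphi\le\overline\varphi(0)$ to force $\kappa=0$. Horizontal translates of $\phi_W$ also have $Z_0=1$ and zero relative action.
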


We first identify the infimum in the field contraction.  For a
finite-energy radial $u$ and $v\in\mathcal H_\perp$, use the $f$ and
$m_\pm$ defined in \eqref{var:reflection-definitions}.

\begin{lemma}
\label{act:contraction}
The normalized field
$\varphi=b_0(u)+v-\log Z_0(b_0(u)+v)$ belongs to
$\mathcal D_W^{\mathrm{max}}$, and
\begin{align}
 &\frac14\int_\R|u'|^2\dd t
   +\frac1{4\pi}\int_{\Sstrip}|\nabla v|^2\dd z
   +\frac{W^2}{4}\int_\R u(t)\sech^2(Wt/2)\dd t
   -W\log\left(\frac W{4\pi}Z_0(b_0(u)+v)\right)
 \notag\\
 &\hspace{20mm}=\mathcal I(\varphi)
                  +W\sum_{\sigma=\pm}m_\sigma(\infty).
 \label{act:contraction-identity}
\end{align}
Every $\varphi\in\mathcal D_W^{\mathrm{max}}$ has such a preimage
with $m_+=m_-=0$.
\end{lemma}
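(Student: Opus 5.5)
The plan is to write $\varphi=\phi_W+\psi$ and compute $\mathcal I(\varphi)$ directly. Put $g=f+v$ and $\ell=\log\bigl(\tfrac{W}{4\pi}Z_0(b_0(u)+v)\bigr)$. By \eqref{var:constraints},
\[
 b_0(u)+v=-2\log\cosh(Wt/2)+f+v ,
\]
so $\varphi=\phi_W+f+v-\ell$ and $\psi=f+v-\ell$.

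First we check membership in $\mathcal D_W^{\mathrm{max}}$.
\begin{itemize}
\item Local $H^1$ regularity and finite Dirichlet energy of $\psi$ follow from Lemma~\ref{var:reflection} for $f$ and from $v\in\mathcal H_\perp$. Constants carry no energy.
\item The unit area condition $Z_0(\varphi)=1$ holds by construction.
\item The vertical mean is $\overline\varphi=b_0(u)-\log Z_0(b_0(u)+v)$, because $v$ has zero angular mean. Since $b_0(u)\le0$ and $b_0(u;0)=0$, the mean attains its maximum at $t=0$.
\end{itemize}

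Next we evaluate $\mathcal I(\varphi)$. The cross term in the energy vanishes, exactly as in the proof of Lemma~\ref{var:selection}, so
\[
 \frac1{4\pi}\int_{\Sstrip}|\nabla\psi|^2\,d^2z=\frac14\int_\R|f'|^2\,dt+\frac1{4\pi}\int_{\Sstrip}|\nabla v|^2\,d^2z .
\]
For the source term we use $\int e^{\phi_W}=1$ and zero angular mean, giving
\[
 W\int_{\Sstrip}\psi\, e^{\phi_W}\,d^2z=\frac{W^2}{4}\int_\R f(t)\sech^2(Wt/2)\,dt-W\ell .
\]
Adding these and applying the reflection identity \eqref{var:reflection-formula} to replace the $f$-energy and $f$-source by the $u$ terms minus $W\sum_\sigma m_\sigma(\infty)$ yields \eqref{act:contraction-identity}. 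All integrals are finite by the same integrability used in Lemma~\ref{var:reflection}.

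For the converse, take $\varphi\in\mathcal D_W^{\mathrm{max}}$ and set $\psi=\varphi-\phi_W$. Decompose $\psi=\overline\psi(t)+v$, where $v=\psi-\overline\psi$ has zero angular mean. Then $v$ has finite Dirichlet energy because angular averaging does not increase energy, and $v\in L^2$ by \eqref{var:poincare} applied to $v_\theta$, so $v\in\mathcal H_\perp$. Put
\[
 f(t)=\overline\psi(t)-\overline\psi(0),\qquad u=f .
\]
This $f$ is locally absolutely continuous with $f'\in L^2$ and $f(0)=0$. The maximum condition on $\overline\varphi$ reads
\[
 -2\log\cosh(Wt/2)+f(t)\le0 ,
\]
so both running maxima $m_\pm$ vanish. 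Hence the reflected perturbation equals $u$ and $b_0(u)=-2\log\cosh(Wt/2)+f$.

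It remains to identify the constant. Since $Z_0(\varphi)=1$, we have $\varphi=b_0(u)+v+c$ with $c=\log(W/4\pi)+\overline\psi(0)$, and integrating the unit area condition gives $c=-\log Z_0(b_0(u)+v)$. So $\varphi$ is the normalized field of $(u,v)$, with $m_+=m_-=0$.

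The main obstacle is the regularity bookkeeping in the converse. One must verify that $\overline\psi$ is locally absolutely continuous with square-integrable derivative controlled by the energy, via Cauchy–Schwarz on the weak angular average. The remaining concern is that ties in the maximum are harmless: they only require $m_\pm\equiv0$, which holds since the supremum is attained at the value $0$.
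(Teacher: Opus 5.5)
Your proposal is correct and takes essentially the same route as the paper. The forward identity comes from orthogonality of the radial and zero-angular-mean parts combined with the reflection identity \eqref{var:reflection-formula}. Your converse choice $u=\overline\psi-\overline\psi(0)$, $v=\psi-\overline\psi$ coincides with the paper's $u=\overline\varphi-\overline\varphi(0)+2\log\cosh(Wt/2)$, $v=\varphi-\overline\varphi$; the regularity check via Cauchy--Schwarz on the weak angular average and the vertical-segment Poincar\'e inequality, and the fixing of the constant by the unit-area constraint, are likewise handled as in the paper.
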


\begin{proof}
The field has area one.  Its vertical mean equals
$b_0(u)-\log Z_0(b_0(u)+v)$, with maximum at zero by
\eqref{var:constraints}.  Also
\[
 \varphi-\phi_W=f+v-
               \log\left(\frac W{4\pi}Z_0(b_0(u)+v)\right).
\]
Orthogonality to the constant angular mode therefore gives
\begin{align*}
 \mathcal I(\varphi)
 &=\frac14\int_\R|f'|^2\dd t
   +\frac1{4\pi}\int_{\Sstrip}|\nabla v|^2\dd z
   +\frac{W^2}{4}\int_\R f(t)\sech^2(Wt/2)\dd t\\
 &\hspace{15mm}-W\log\left(\frac W{4\pi}Z_0(b_0(u)+v)\right).
\end{align*}
The reflection identity proves \eqref{act:contraction-identity}.

Conversely, for $\varphi\in\mathcal D_W^{\mathrm{max}}$ choose
\begin{equation}
\label{act:inverse}
 u(t)=\overline\varphi(t)-\overline\varphi(0)
                         +2\log\cosh(Wt/2),\qquad
 v(t,\theta)=\varphi(t,\theta)-\overline\varphi(t).
\end{equation}
The value at zero gives $u(0)=0$. Since $\phi_W$ is independent of
$\theta$, differentiation of the angular mean and Cauchy--Schwarz give
\[
 \int_\R|u'|^2dt
 =\int_\R|\overline\varphi'-\phi_W'|^2dt
 \le\frac1\pi\int_{\Sstrip}
                  |\partial_t(\varphi-\phi_W)|^2dz<\infty.
\]
The derivative of $v$ is the orthogonal projection of
$\nabla(\varphi-\phi_W)$ onto the zero-angular-mean part, so
$\int|\nabla v|^2\le\int|\nabla(\varphi-\phi_W)|^2$.
Poincar\'e's inequality on every vertical segment gives
$\int|v|^2\le\int|v_\theta|^2$. Thus $v\in\mathcal H_\perp$.  The expression inside either running
maximum is $\overline\varphi(\pm t)-\overline\varphi(0)\le0$, with
value zero at the origin, so the maxima vanish.  Finally
$Z_0(b_0(u)+v)=e^{-\overline\varphi(0)}$ by the area constraint.
The normalized reconstruction consequently equals $\varphi$ exactly.
All preimages have cost at least $\mathcal I(\varphi)$, and this one
attains that value.  Fields outside \eqref{act:domain} have no
finite-energy preimage.
\end{proof}

\begin{proof}[Proof of Theorem~\ref{act:field-ldp}]
The enhanced Gaussian LDP of Lemma~\ref{ldp:joint} has finite rate
only when its mass coordinate is $Z_0(b_0(u)+v)$, with Gaussian cost
$\frac14\int|u'|^2+\frac1{4\pi}\int|\nabla v|^2$.
The weighted closed-set upper bound and matching open-set lower bound
of Lemma~\ref{ldp:weighted-upper}, divided by the denominator whose
rate is zero by \eqref{ldp:normalizer-eq}, prove the full tilted LDP.
Its rate is the left side of
\eqref{act:contraction-identity} on the mass graph and infinity
elsewhere.  Coercivity and Lemma~\ref{var:compactness} show it is good.

For positive mass, the map
\[
 (u,v,a)\longmapsto b_0(u)+v-\log a
\]
is continuous into
$\widehat E_s=C_{\mathrm{loc}}(\R;H^{-s}_N(0,\pi))$, now including
the constant angular mode.  To handle its logarithm at zero, replace
$\log a$ by $\log(a\vee1/n)$.  These maps are continuous everywhere.
On each tilted-rate sublevel the mass is bounded away from zero:
otherwise compactness would give a finite-rate point with zero mass,
contradicting positivity of $Z_0(b_0(u)+v)$.  In particular,
\[
 \inf\left\{\text{tilted rate at }(u,v,a):0\le a\le1/n\right\}
                 \longrightarrow+\infty.
\]
Otherwise bounded-rate points with masses tending to zero would have
a convergent subsequence in a compact rate sublevel, with mass zero.
Apply the tilted LDP upper bound to the closed set $a\le1/n$.
The probability on which the logarithm cutoff changes the field
therefore has logarithmic upper rate tending to $-\infty$.
  Extended contraction applies, and
Lemma~\ref{act:contraction} identifies its rate as \eqref{act:rate}.

The true reconstruction in \eqref{eq:shifted-expectation} uses
$b_\gamma$.  Adding $\gamma^2t/2$ changes the running maximum on
$[0,t]$ by an amount between zero and $\gamma^2t/2$.  It follows that
\[
 |b_\gamma(u;t)-b_0(u;t)|\le\gamma^2|t|/2,
\]
uniformly in $u$.  Hence the true and limiting reconstructions are
exponentially equivalent in $\widehat E_s$.

Restriction from $\widehat E_s$ to $H^{-s}_{\mathrm{loc}}(\Sstrip)$
is continuous for every $s>0$.  Indeed, after even angular extension
and a compactly supported horizontal cutoff, Plancherel's theorem and
\[
 (1+\xi^2+n^2)^{-s}\le(1+n^2)^{-s}
\]
bound the isotropic negative Sobolev norm by the
$L^2_tH^{-s}_N$ norm, which in turn is bounded by the local
$C_tH^{-s}_N$ norm.  Interior angular cutoffs are bounded Sobolev
multipliers.  The restriction may identify general distributions
supported on the horizontal boundary.  It is injective on finite-rate
fields, however, since those are $H^1$ functions on bounded full
strips and are determined by their interior distributions.  Goodness
ensures attainment of each finite contraction infimum.  Ordinary
contraction therefore gives exactly \eqref{act:rate} in the stated
topology.  Nonnegativity and uniqueness of the zero follow from
Lemma~\ref{var:selection} and the minimizing preimage
\eqref{act:inverse}.
\end{proof}

\subsection{Renormalization at the strip ends}

The gradient of $\phi_W$ tends to $\mp W$ at the two ends, so the
unnormalized Dirichlet energy diverges.  The following cutoff expression
gives the action with its boundary contributions and exact constant.
It is defined for every $\varphi-\phi_W$ of finite relative Dirichlet
energy, without an area or embedding constraint:
\begin{equation}
\label{act:counterterms}
 \mathscr S_W(\varphi)=\lim_{T\to\infty}\left\{
  \frac1{4\pi}\int_{[-T,T]\times(0,\pi)}|\nabla\varphi|^2\dd z
  +\frac W2\bigl(\overline\varphi(T)+\overline\varphi(-T)\bigr)
  +\frac{W^2}{2}T\right\}.
\end{equation}

\begin{lemma}
\label{act:action}
The limit \eqref{act:counterterms} exists and is finite.  For
$\varphi\in\mathcal D_W^{\mathrm{max}}$,
\begin{equation}
\label{act:action-rate}
 \mathcal I(\varphi)=\mathscr S_W(\varphi)-\mathscr S_W(\phi_W)
 =\mathscr S_W(\varphi)+W\bigl(1+\log(\pi/W)\bigr).
\end{equation}
For general finite-relative-energy $\varphi$, the scale-invariant
relative action is
\begin{align}
 &\mathscr S_W(\varphi)-\mathscr S_W(\phi_W)-W\log Z_0(\varphi)
 \notag\\
 &\quad=\frac1{4\pi}\int_{\Sstrip}|\nabla(\varphi-\phi_W)|^2\dd z
       +W\int_{\Sstrip}(\varphi-\phi_W)e^{\phi_W}\dd z
       -W\log\int_{\Sstrip}e^{\varphi-\phi_W}e^{\phi_W}\dd z.
 \label{act:relative-action}
\end{align}
\end{lemma}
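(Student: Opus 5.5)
Write $\psi=\varphi-\phi_W$, which has finite Dirichlet energy. The plan is to expand the cutoff expression in \eqref{act:counterterms} around $\phi_W$ and integrate the cross term by parts, using Lemma~\ref{setup:geometry}.

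On $D_T=[-T,T]\times(0,\pi)$ we write
\[
 |\nabla\varphi|^2=|\nabla\phi_W|^2+2\nabla\phi_W\cdot\nabla\psi+|\nabla\psi|^2 .
\]
Since $\phi_W$ depends only on $t$ and $\partial_n\phi_W=0$ on the horizontal boundary, Green's formula gives
\[
 \int_{D_T}\nabla\phi_W\cdot\nabla\psi
 =2\pi W\int_{D_T}\psi e^{\phi_W}
 +\pi\bigl[\phi_W'(T)\overline\psi(T)-\phi_W'(-T)\overline\psi(-T)\bigr].
\]
Here we use $-\Delta\phi_W=2\pi We^{\phi_W}$. The identity holds for $\psi\in H^1(D_T)$ by density, with boundary traces replaced by the continuous vertical means.

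Next we insert $\phi_W'(\pm T)=\mp W\tanh(WT/2)$. After multiplying by $1/(4\pi)$, the cross term equals $W\int_{D_T}\psi e^{\phi_W}$ minus $\tfrac W2\tanh(WT/2)(\overline\psi(T)+\overline\psi(-T))$. This boundary piece cancels the counterterm $\tfrac W2(\overline\psi(T)+\overline\psi(-T))$ up to the error $\tfrac W2(1-\tanh(WT/2))(\overline\psi(T)+\overline\psi(-T))$. We must show this error tends to zero. By Cauchy--Schwarz, $|\overline\psi(\pm T)-\overline\psi(0)|\le\sqrt{T/\pi}\,\|\nabla\psi\|_2$, while $1-\tanh$ decays like $e^{-WT}$, so the error vanishes.

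The same $\sqrt{|t|}$ growth bound, together with exponential decay of $e^{\phi_W}$, shows that $\int_{D_T}\psi e^{\phi_W}$ converges absolutely. The energy term $\frac{1}{4\pi}\int_{D_T}|\nabla\psi|^2$ converges by assumption. It remains to handle the purely $\phi_W$ part
\[
 \frac1{4\pi}\int_{D_T}|\nabla\phi_W|^2+\frac W2\bigl(\overline\phi_W(T)+\overline\phi_W(-T)\bigr)+\frac{W^2}2T .
\]
Using $|\phi_W'|^2=W^2\tanh^2$, the integral equals $\tfrac{W^2}{2}T-\tfrac{W^2}{2}\cdot\tfrac{2}{W}\tanh(WT/2)$. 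Also $\phi_W(\pm T)=\log(W/4\pi)-WT+2\log2+o(1)$. The terms linear in $T$ cancel ($W^2T/2-W^2T+W^2T/2=0$), and the limit is
\[
 \mathscr S_W(\phi_W)=-W+W\log\frac W{4\pi}+2W\log2=-W\bigl(1+\log(\pi/W)\bigr).
\]
This is the constant matching the free energy \eqref{setup:free-energy}, and it also proves existence and finiteness of the limit for every $\varphi$.

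Collecting the pieces gives the identity
\[
 \mathscr S_W(\varphi)-\mathscr S_W(\phi_W)=\frac1{4\pi}\int|\nabla\psi|^2+W\int\psi e^{\phi_W}
\]
for every finite-relative-energy $\varphi$. For $\varphi\in\mathcal D_W^{\max}$ this is $\mathcal I(\varphi)$ by \eqref{act:rate}, which proves \eqref{act:action-rate}. For general $\varphi$ we subtract $W\log Z_0(\varphi)$ and rewrite $Z_0(\varphi)=\int e^{\psi}e^{\phi_W}$, which gives \eqref{act:relative-action}.

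The main obstacle is the justification at the ends: the trace and Green identity at low regularity, and the vanishing of the boundary error. Both reduce to the $\sqrt{T}$ bound on vertical means against the $e^{-WT}$ decay of $1-\tanh$. The remaining steps are explicit calculus.
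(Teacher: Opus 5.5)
Your proposal is correct and follows the paper's proof essentially step for step. You expand $|\nabla\varphi|^2$ about $\phi_W$ and integrate the cross term by parts; the paper does this after taking angular means, you via Green's formula with $\partial_n\phi_W=0$, which is the same computation. You then kill the residual $\tfrac W2(1-\tanh(WT/2))(\overline\psi(T)+\overline\psi(-T))$ with the $\sqrt{T}$ bound and evaluate $\mathscr S_W(\phi_W)=W(\log(W/\pi)-1)$ directly.

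The one thing the paper includes that you skip is a check that $Z_0(\varphi)$ is finite and positive for general finite-relative-energy $\varphi$. It notes $\psi\in L^2(e^{\phi_W}\,dz)$, using the $\sqrt{|t|}$ bound on $\overline\psi$ and Poincar\'e on the lateral part, and then applies Lemma~\ref{var:onofri}. This makes \eqref{act:relative-action} an identity between finite quantities rather than possibly $-\infty=-\infty$.
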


\begin{proof}
Put $h=\varphi-\phi_W$.  Its angular mean obeys
$|\overline h(t)|\le|\overline h(0)|+
\sqrt{|t|/\pi}\,\|\nabla h\|_2$.
Its zero-mean lateral part belongs to $H^1(\Sstrip)$ by Poincar\'e.
These bounds show $h\in L^2(e^{\phi_W}\dd z)$, so
Lemma~\ref{var:onofri} proves finiteness and positivity of
$Z_0(\varphi)$ as well.

Expanding the square and integrating by parts in the angular mean
gives
\begin{align*}
 &\frac1{4\pi}\int_{[-T,T]\times(0,\pi)}
                    (|\nabla\varphi|^2-|\nabla\phi_W|^2)\dd z\\
 &\quad=\frac1{4\pi}\int_{[-T,T]\times(0,\pi)}|\nabla h|^2\dd z
 -\frac W2\tanh(WT/2)\bigl(\overline h(T)+\overline h(-T)\bigr)\\
 &\qquad+\frac{W^2}{4}\int_{-T}^{T}\overline h(t)\sech^2(Wt/2)\dd t.
\end{align*}
After adding the difference of the boundary terms in
\eqref{act:counterterms}, the remaining boundary contribution is
$\frac W2(1-\tanh(WT/2))(\overline h(T)+\overline h(-T))$, which
tends to zero by the square-root growth bound.  The other terms
converge to
$\frac1{4\pi}\int|\nabla h|^2+W\int h e^{\phi_W}$.
Direct integration of the baseline gives
\[
 \frac1{4\pi}\int_{[-T,T]\times(0,\pi)}|\nabla\phi_W|^2\dd z
    =\frac{W^2}{2}T-W\tanh(WT/2),\qquad
 \phi_W(T)=-WT+\log(W/\pi)+o(1).
\]
Therefore $\mathscr S_W(\phi_W)=W(\log(W/\pi)-1)$, and all stated
identities follow.  This computation requires no convergence of $h$
or its derivative at either end.
\end{proof}

\subsection{Embedding and the classical equation}

The domain restriction in \eqref{act:rate} is essential.  The translated
profile $\phi_W(t+a,\theta)$ has zero relative action for every
$a\in\R$, but when $a\ne0$ its angular mean is maximized at $-a$.
Its rate in the maximum embedding is therefore infinite.  In general
the action is invariant under horizontal translation, whereas the rate
in fixed coordinates includes the stated embedding constraint.

For completeness, this invariance can be checked without imposing
endpoint limits. Fix a translation $a\in\R$ and put
$h=\varphi-\phi_W$,
$k(t)=\phi_W(t+a)-\phi_W(t)$. The translated relative perturbation is
$h(t+a,\theta)+k(t)$. Expanding its energy and subtracting the energy
of $h$ leaves the mixed term and the energy of $k$. The mixed term is
\[
 \frac1{2\pi}\int_{\Sstrip}\nabla h(t+a,\theta)\cdot\nabla k(t)dz
 =W\int_{\Sstrip}h(t+a,\theta)
                  \bigl(e^{\phi_W(t+a)}-e^{\phi_W(t)}\bigr)dz.
\]
Indeed, $\Delta k=-2\pi W(e^{\phi_W(t+a)}-e^{\phi_W(t)})$,
and integration by parts has no angular boundary term because $k$
is radial. Its horizontal boundary terms vanish since $k'$ decays
exponentially and the angular mean of $h$ grows at most as
$1+\sqrt{|t|}$. By the substitution $s=t+a$, the first term on the
right is $W\int h(s,\theta)e^{\phi_W(s)}ds\,d\theta$.
Consequently this mixed term cancels the change in the linear source
$W\int h e^{\phi_W}$. The remaining difference is
\[
 \frac1{4\pi}\int_{\Sstrip}|\nabla k|^2dz
                       +W\int_{\Sstrip}k e^{\phi_W}dz=0.
\]
The equality follows from radial equality in Onofri, since
$\int e^{k+\phi_W}=\int e^{\phi_W(t+a)}=1$.
Finally $Z_0(\varphi(\,\cdot+a))=Z_0(\varphi)$ by change of
variables. This proves invariance of the action and its scale-invariant
version in \eqref{act:relative-action}.

The Euler--Lagrange equation refers to this invariant action before
fixing the embedding, or equivalently to its variational problem modulo
translations.  For a smooth stationary point, varying
$\mathscr S_W(\varphi)-W\log Z_0(\varphi)$ by a smooth function $\eta$ compactly
supported in the horizontal direction, but unrestricted on the angular
boundary, gives
\[
 \frac1{2\pi}\int_{\Sstrip}\nabla\varphi\cdot\nabla\eta\dd z
       -\frac{W}{Z_0(\varphi)}\int_{\Sstrip}\eta e^\varphi\dd z=0.
\]
For tests supported in the interior, integration by parts yields
$\Delta\varphi+2\pi W e^\varphi/Z_0(\varphi)=0$.
Allowing arbitrary boundary traces of $\eta$ then forces
$\partial_n\varphi=0$, where $n$ is the outward unit normal.
After area normalization this becomes
\begin{equation}
\label{act:liouville}
 \Delta\varphi+2\pi W e^\varphi=0\quad\text{in }\Sstrip,
 \qquad \partial_n\varphi=0\quad\text{on its horizontal boundaries}.
\end{equation}
The boundary condition is natural for smooth stationary points, not
an additional trace condition on generic finite-rate fields.  It
corresponds to curvature $\pi W$ for the metric $e^\varphi|\dd z|^2$.
The explicit field $\phi_W$ solves this equation, and the action minimum
is attained uniquely there in the maximum embedding.

\end{document}